\newtheoremstyle{amit}
{7pt}
{7pt}
{}
{7pt}
{\bf}
{:}
{.5em}
{}
\theoremstyle{amit}
\newtheorem{theorem}{Theorem}[subsection]
\newtheorem{definition}[theorem]{Definition}
\newtheorem{lemma}[theorem]{Lemma}
\newtheorem{prop}[theorem]{Proposition}
\newtheorem{corollary}[theorem]{Corollary}
\newtheorem{remark}[theorem]{Remark}
\newtheorem{ex}[theorem]{Example}
\newcommand{\Ffunc}		{\mathsf{F}} 
\newcommand{\Gfunc}		{\mathsf{G}} 
\newcommand{\Zfunc}		{\mathsf{Z}} 
\newcommand{\Bfunc}		{\mathsf{B}} 
\newcommand{\ZB}		{\mathsf{ZB}} 
\newcommand{\Fil}		{\mathsf{Fil}}
\newcommand{\wfil}		{\mathsf{W\textnormal{-}Fil}}
\newcommand{\Dgm}	    {\mathsf{Dgm}}
\newcommand{\wDgm}	    {\mathsf{W\textnormal{-}Dgm}}
\newcommand{\cmet}	    {\mathsf{CatMet}}
\newcommand{\cmm}	    {\mathsf{CatMM}}
\newcommand{\PD}	    {\mathsf{PD}}
\newcommand{\flip}	    {\mathsf{flip}}
\newcommand{\cost}	    {\mathsf{cost}}
\newcommand{\dis}	    {\mathsf{dis}}
\newcommand{\defo}	    {\mathsf{dpl}} 
\newcommand{\supp}      {\mathrm{supp}}
\newcommand{\subcx}          	{\mathsf{SubCx}} 
\newcommand{\dgh}        {d_\mathrm{GH}} 
\newcommand{\cpl}          	{\mathsf{Cpl}} 
\newcommand{\diag}          	{\mathsf{diag}} 
\newcommand{\defcost}          	{\mathsf{dpl\textnormal{-}cost}} 
\newcommand{\len}	    {\mathsf{len}} 
\newcommand{\VR}	    {\mathsf{VR}}
\newcommand{\WVR}	    {w\mathsf{\textnormal{-}VR}}
\newcommand{\dgw}[1]{d_\mathrm{GW_{#1}}}
\DeclareMathOperator{\ima}{im}
\newcommand{\RR}{\mathbb{R}}
\newcommand{\ZZ}{\mathbb{Z}}
\newcommand{\NN}{\mathbb{N}}
\newcommand{\eps}{\varepsilon} 
\title{$\ell^p$-Stability of Weighted Persistence Diagrams}
\author[1]{Aziz Burak G\"ulen\footnote{\href{mailto:aziz.burak.guelen@duke.edu}{aziz.burak.guelen@duke.edu}}}
\author[2]{Facundo M\'emoli\footnote{\href{mailto:facundo.memoli@gmail.com}{facundo.memoli@gmail.com}}}
\author[3]{Amit Patel\footnote{\href{mailto:amit.patel@colostate.edu}{amit.patel@colostate.edu}}}
\affil[1]{Department of Mathematics, Duke University}
\affil[2]{Department of Mathematics, Rutgers University}
\affil[3]{Department of Mathematics, Colorado State University}
\date{}
\begin{document}  

\maketitle

\begin{abstract}
   We introduce the concept of weighted persistence diagrams and develop a functorial pipeline for constructing them from finite metric measure spaces. This builds upon an existing functorial framework for generating classical persistence diagrams from finite pseudo-metric spaces. To quantify differences between weighted persistence diagrams, we define the $p$-edit distance for $p \in [1, \infty]$, and—focusing on the weighted Vietoris–Rips filtration—we establish that these diagrams are stable with respect to the $p$-Gromov–Wasserstein distance as a direct consequence of functoriality. In addition, we present an Optimal Transport-inspired formulation of the $p$-edit distance, enhancing its conceptual clarity. Finally, we explore the discriminative power of weighted persistence diagrams, demonstrating advantages over their unweighted counterparts.
\end{abstract}

\tableofcontents


\section{Introduction}
\label{sec:introduction}
Persistent homology is one of the central subjects in Topological Data Analysis (TDA). When analyzing point cloud data, i.e., a finite metric space, a topological summary of the data can be obtained via  persistent diagrams~\cite{edelsbrunner2000, zomorodian2005, burago2001} of a filtration associated to the dataset. When analyzing finite metric spaces, the initial step involves constructing a filtration from the metric space. Two common approaches for achieving this are the Vietoris-Rips filtration and the \v{C}ech filtration. 

A fundamental result in persistent homology is that persistent diagrams are stable with respect to the perturbations of input filtration / metric space. Classical stability results for persistence diagrams typically rely on the Gromov–Hausdorff distance between input metric spaces and the bottleneck distance between output persistence diagrams~\cite{cohen-steiner2007, chazal2009}. In this work, we study the $p$-stability of weighted persistence diagrams—that is, classical persistence diagrams equipped with probability measures. To this end, we replace the input collection of metric spaces with the collection of metric measure spaces (or mm-spaces), which are metric spaces endowed with fully supported probability measures. On the output side, we extend the classical notions of filtrations and persistence diagrams to their weighted counterparts.

Metric measure spaces are triples $(X, d_X, \mu_X)$ that combine geometric structure (a metric $d_X$ on $X$) with probabilistic structure (a probability measure $\mu_X$ on $X$), providing a powerful framework for comparing and analyzing complex data. They play a central role in shape analysis, topological data analysis (TDA) \cite{chazal2009}, and optimal transport \cite{memoli2007use,facundo-gw,sturm-geodesic}. Applications include comparing point clouds or graphs via the \emph{Gromov–Wasserstein distance} \cite{facundo-gw}, modeling variability in anatomical shapes, and studying empirical distributions in geometric inference. The mm-space perspective enables coordinate-free, geometry-aware comparisons in high-dimensional and structured data settings. One of the concepts pertaining to mm-spaces is the notion of \emph{distance distribution} \cite{osada2002shape,facundo-gw,memoli-needham} of a mm-space $(X,d_X,\mu_X)$ which is the probability measure on the real line arising as the pushforward of $\mu_X\otimes \mu_X$ via $d_X$. Besides being stable under the Gromov-Wasserstein distance, their direct comparison yields fast methods for object matching.

Motivated by the flexibility inherent to the mm-space representation of data, we introduce suitable notions of weighted filtrations and weighted persistence diagrams. Our central approach is to construct a functorial pipeline that maps mm-spaces to weighted persistence diagrams, and to analyze this construction using (different instantiations of) the notion of edit distance. When appropriately designed, such functorial pipelines yield stability results with respect to edit distances, as shown in~\cite{mccleary2022edit, orthogonal-mobius}. We adopt this framework to establish the $p$-stability of weighted persistence diagrams. To capture variability in both geometric and probabilistic aspects, we introduce weights on filtrations and persistence diagrams, allowing us to define $p$-edit distances between them. Our main stability result expresses the $p$-edit distance between weighted persistence diagrams in terms of the $p$-Gromov–Wasserstein distance between the input mm-spaces. To further enhance the interpretability of this framework, we show that the $p$-edit distance admits an Optimal Transport-like formulation, shedding light on its underlying structure and discriminative behavior.


\subsection{Previous Work}

In~\cite{mccleary2022edit}, the authors introduce a functorial pipeline  
\[
\Fil(K) \to \Dgm
\]  
for producing persistence diagrams from filtrations of a fixed finite simplicial complex $K$. They organize the collection of filtrations into a category $\Fil(K)$ and the collection of persistence diagrams into a category $\Dgm$. Within this framework, the degree-$d$ persistence diagram of a filtration is obtained via M\"obius inversion---a combinatorial technique for compressing and summarizing mathematical data. Specifically, M\"obius inversion is applied to the degree-$d$ birth-death function associated with the filtration. The functoriality of the pipeline is a consequence of Rota's Galois Connection Theorem (RGCT)~\cite{rota64, greenemobius, mccleary2022edit, gal-conn}, which links the morphisms in $\Fil(K)$ and $\Dgm$---both defined in terms of Galois connections---to the application of M\"obius inversion. RGCT thus ensures that the assignment of persistence diagrams is functorial.

Each morphism in the categories $\Fil(K)$ and $\Dgm$ is assigned a non-negative cost. These cost assignments induce metrics—referred to as edit distances—on the objects of $\Fil(K)$ and $\Dgm$, denoted $d_{\Fil(K)}^E$ and $d_{\Dgm}^E$, respectively. Leveraging the functoriality of the pipeline, the authors establish the following stability result.

\begin{theorem}[{\cite[Theorem 8.4]{mccleary2022edit}}]
    Let $\Ffunc$ and $\Gfunc$ be two filtrations in $\Fil(K)$ and let $\PD_d^\Ffunc$ and $\PD_d^\Gfunc$ denote their degree-$d$ persistence diagrams respectively. Then,
    \[
    d_{\Dgm}^E (\PD_d^\Ffunc, \PD_d^\Gfunc) \leq d_{\Fil(K)}^E (\Ffunc, \Gfunc).
    \]
\end{theorem}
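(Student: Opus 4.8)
The plan is to derive the inequality entirely from the functoriality of the pipeline $\Fil(K) \to \Dgm$, reducing the global statement to a single local (per-morphism) estimate. Recall that an edit distance $d^E$ attached to a category whose morphisms carry non-negative costs is, by definition, an infimum of total path-costs: for objects $A$ and $B$,
\[
d^E(A,B) \;=\; \inf \sum_{i} \cost(\phi_i),
\]
where the infimum ranges over all finite zigzags of morphisms $\phi_i$ joining $A$ to $B$. In particular, $d_{\Fil(K)}^E(\Ffunc,\Gfunc)$ is an infimum over zigzags in $\Fil(K)$ from $\Ffunc$ to $\Gfunc$, while $d_{\Dgm}^E(\PD_d^\Ffunc,\PD_d^\Gfunc)$ is an infimum over zigzags in $\Dgm$.

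The single ingredient I would isolate is that the degree-$d$ diagram functor $\PD_d\colon\Fil(K)\to\Dgm$, which sends a filtration $\mathsf{H}$ to its diagram $\PD_d^{\mathsf{H}}$, is \emph{cost non-increasing} on morphisms: for every morphism $\phi$ of $\Fil(K)$ one has $\cost(\PD_d(\phi)) \le \cost(\phi)$ in $\Dgm$. Granting this, the theorem follows at once. Given any zigzag
\[
\Ffunc = \Ffunc_0 \;\leftrightarrow\; \Ffunc_1 \;\leftrightarrow\; \cdots \;\leftrightarrow\; \Ffunc_n = \Gfunc
\]
in $\Fil(K)$, functoriality produces a zigzag $\PD_d^\Ffunc = \PD_d(\Ffunc_0) \leftrightarrow \cdots \leftrightarrow \PD_d(\Ffunc_n) = \PD_d^\Gfunc$ in $\Dgm$, and the cost non-increasing property bounds its total cost by that of the original zigzag. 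Hence $d_{\Dgm}^E(\PD_d^\Ffunc,\PD_d^\Gfunc)$ is at most the cost of every $\Fil(K)$-zigzag from $\Ffunc$ to $\Gfunc$; taking the infimum over all such zigzags yields the claimed inequality.

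The heart of the matter, and the step I expect to be the main obstacle, is the cost non-increasing property. Here the morphisms in both $\Fil(K)$ and $\Dgm$ are encoded by Galois connections, and $\PD_d$ is realized by applying M\"obius inversion to the degree-$d$ birth-death function. Rota's Galois Connection Theorem supplies the qualitative half: a Galois connection between the indexing data of two filtrations induces a Galois connection between their M\"obius-inverted diagrams, ensuring that $\PD_d$ genuinely carries morphisms to morphisms. What remains is the quantitative comparison of the two costs. I would establish it by reducing to the generating morphisms of $\Fil(K)$, for which the cost is defined directly, writing the induced diagram morphism explicitly through the M\"obius inversion formula, and verifying that the inversion never amplifies cost: an elementary modification of a filtration can only produce an equally elementary---or cheaper---modification of the inverted diagram. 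Controlling how the cancellation inherent in M\"obius inversion interacts with the cost bookkeeping is the delicate point, and is precisely where RGCT must be leveraged to keep the two cost structures aligned.
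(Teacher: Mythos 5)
Your top-level reduction is exactly the paper's: an edit distance is an infimum of zigzag costs, a functor carries zigzags to zigzags, and a per-morphism cost bound then gives the inequality (this is the paper's \cref{prop: general edit distance stability}, applied with constant $M=1$). Attributing the qualitative half---that $\PD_d$ sends morphisms to morphisms---to Rota's Galois Connection Theorem is also correct: RGCT converts the pullback identity $\ZB_d^{\Gfunc} = \ZB_d^{\Ffunc}\circ \bar g$ on birth--death functions into the pushforward identity $\PD_d^{\Gfunc} = \bar f_\sharp \PD_d^{\Ffunc}$, which is precisely the definition of a persistence diagram morphism.

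The gap is in what you call the heart of the matter. You leave the per-morphism cost inequality unproven and sketch a route---``writing the induced diagram morphism explicitly through the M\"obius inversion formula'' and ``controlling how the cancellation inherent in M\"obius inversion interacts with the cost bookkeeping,'' with RGCT ``leveraged to keep the two cost structures aligned''---that misreads how costs are defined in $\Dgm$. The cost of a persistence diagram morphism $f: Q \leftrightarrows R : g$ is $\dis(\bar f)$, the distortion of the componentwise extension of $f$ to intervals; it depends only on the Galois connection between the indexing posets, not on the diagrams $\sigma,\tau$ and not on any M\"obius inversion. Likewise the cost of the filtration morphism is $\dis(f)$. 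So there is no cancellation to control and RGCT plays no role in the quantitative half; the entire content of the cost comparison is the elementary metric identity $\dis(\bar f)=\dis(f)$ for an order-preserving $f$ between subsets of $[0,\infty)$, where intervals carry the $\ell^\infty$ metric (the paper's \cref{prop: same dis on intervals}, quoted from the source). This identity is easy (one inequality is immediate since $\bar f$ restricted to the diagonal recovers $f$; the other follows by expanding $\|\bar f(I)-\bar f(J)\|_\infty$ coordinatewise), but it is the one fact your proposal needed to state and verify, and the analysis you propose in its place would not produce it.
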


It is important to emphasize that this stability result does not concern metric spaces as inputs; rather, it addresses filtrations as the primary objects of study. Specifically, it establishes the stability of persistence diagrams with respect to perturbations of filtrations in terms of edit distance.

We also note that related $p$-stability results for (unweighted) persistence diagrams have been established in~\cite{LpCohenSteiner2010, skraba-wass}. Like the result above, these too do not take metric (or metric measure) spaces as input. Instead, they consider functions and the filtrations derived from their sublevel sets.

\subsection{Our Contributions}

First, we extend the functorial pipeline of~\cite{mccleary2022edit} to  
\[
\cmet \to \Fil \to \Dgm,
\]
where $\cmet$ denotes the category of finite pseudo-metric spaces and $\Fil$ denotes the category of one-parameter filtrations of finite simplicial complexes. By carefully designing the morphisms in $\cmet$, we realize the Vietoris--Rips filtration as a functor  
\[
\VR : \cmet \to \Fil.
\]
In addition, we equip each morphism in $\cmet$ with a non-negative cost, thereby inducing a notion of edit distance $d_{\cmet}^E$ between metric spaces. One of our main results is the following.

\begin{theorem}\label{thm: classical edit distance stability of pd}
    Let $(X,d_X)$ and $(Y,d_Y)$ be finite pseudo-metric spaces and let $\VR(X)$ and $\VR(Y)$ denote their Vietoris-Rips filtrations respectively. Then,
    \[
    d_{\Dgm}^E (\PD_d^{\VR(X)}, \PD_d^{\VR(Y)} ) \leq 2 \cdot d_{\cmet}^E ((X,d_X), (Y,d_Y)) = 4 \cdot \dgh ((X,d_X), (Y,d_Y)).
    \]
\end{theorem}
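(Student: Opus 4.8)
The plan is to factor the bound through the two-stage pipeline $\cmet \to \Fil \to \Dgm$, treat each stage as a Lipschitz estimate between the respective edit distances, and then convert the edit distance on $\cmet$ into the Gromov--Hausdorff distance. Concretely, I would prove three statements and chain them: (a) the M\"obius-inversion stage $\Fil \to \Dgm$ is $1$-Lipschitz, so that $d_{\Dgm}^E(\PD_d^{\VR(X)}, \PD_d^{\VR(Y)}) \le d_{\Fil}^E(\VR(X), \VR(Y))$; (b) the functor $\VR : \cmet \to \Fil$ is $2$-Lipschitz, so that $d_{\Fil}^E(\VR(X), \VR(Y)) \le 2\, d_{\cmet}^E((X,d_X),(Y,d_Y))$; and (c) the edit distance on $\cmet$ satisfies $d_{\cmet}^E = 2\,\dgh$. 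Combining (a)--(c) yields both the displayed inequality and its accompanying equality. Statement (a) is exactly the stability theorem of \cite{mccleary2022edit} quoted above, now applied to the category $\Fil$ of filtrations with varying underlying complex; since every morphism we produce in $\Fil$ arises as $\VR(\varphi)$ and Rota's Galois Connection Theorem renders the assignment of diagrams cost non-increasing, the $1$-Lipschitz bound transfers to this setting.

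For (b) I would argue at the level of a single morphism and then pass to edit paths. Each morphism $\varphi$ in $\cmet$ carries a cost $\cost(\varphi)$, and functoriality supplies a morphism $\VR(\varphi)$ in $\Fil$. The key estimate is the pointwise bound $\cost(\VR(\varphi)) \le 2\,\cost(\varphi)$, which is where one factor of $2$ enters: a morphism of cost $c$ in $\cmet$ perturbs the relevant pairwise distances by $c$, and since the Vietoris--Rips value of a simplex $\sigma$ is $\max_{x,x'\in\sigma} d(x,x')$, realizing the induced comparison of filtrations as a genuine morphism of $\Fil$ requires a composite of the two elementary Galois-connection morphisms that bracket the target from above and below, each of cost $c$. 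Because both edit distances are defined as infima of total cost over zigzag paths of morphisms, applying this pointwise bound term by term to an optimal path from $(X,d_X)$ to $(Y,d_Y)$ and taking the infimum yields the $2$-Lipschitz inequality.

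For (c) I would establish the two inequalities separately. For $d_{\cmet}^E \le 2\,\dgh$, given a correspondence $R$ between $X$ and $Y$ of distortion $\dis(R)$, I would build an explicit tripod $X \leftarrow Z \rightarrow Y$ in $\cmet$, where $Z$ carries a metric interpolating the two endpoints through $R$, and verify that the total cost of this zigzag equals $\dis(R)$; infimizing over $R$ and using $\dgh = \tfrac12 \inf_R \dis(R)$ gives $d_{\cmet}^E \le \inf_R \dis(R) = 2\,\dgh$. For the reverse inequality, given any zigzag path in $\cmet$ from $X$ to $Y$, I would compose the elementary relations along it into a single correspondence $R$ between $X$ and $Y$ and bound $\dis(R)$ by the accumulated cost of the path via a triangle-inequality argument, obtaining $2\,\dgh \le \inf_R \dis(R) \le d_{\cmet}^E$.

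The main obstacle I anticipate is the constant bookkeeping in (b) together with the lower bound in (c). In (b) one must confirm that the factor is exactly $2$ and not larger: a single metric perturbation must be tracked simultaneously across every simplex containing the affected points, and one must check that the induced Galois connection is genuinely a morphism of $\Fil$ of the claimed cost rather than merely an interleaving of filtrations. In (c), reconstructing one correspondence from a heterogeneous zigzag of elementary moves---some of which may add or delete points while others rescale distances---and controlling its distortion by the summed cost is delicate, since distortions do not add but accumulate through repeated use of the triangle inequality; making this accumulation tight enough to recover the constant exactly is the crux of the argument.
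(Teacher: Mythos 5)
Your three-step factorization is exactly the paper's: \cref{prop: functoriality of pd} gives the $1$-Lipschitz bound for the stage $\Fil\to\Dgm$, \cref{prop: unweighted vr is functor} gives the $2$-Lipschitz bound for $\VR:\cmet\to\Fil$, and \cref{prop: gh as edit} identifies $d_\cmet^E$ with $2\dgh$; chaining them via \cref{prop: general edit distance stability} is precisely how the paper concludes. One remark on (b): the factor of $2$ does not arise from composing two bracketing Galois-connection morphisms each of cost $c$. A single metric morphism $h:(X,d_X)\twoheadrightarrow(Y,d_Y)$ induces a single Galois connection $f:D(X,d_X)\leftrightarrows D(Y,d_Y):g$ on the sets of realized distances, with $f(d_X(x,x'))=d_Y(h(x),h(x'))$ (\cref{lem: technical lemma for functorial VR}), and the cost of the resulting filtration morphism is $\dis(f)=\max_{r,r'}\bigl|(r-r')-(f(r)-f(r'))\bigr|\le 2\max_r|r-f(r)|=2\dis(h)$ --- the $2$ is a plain triangle inequality. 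This is a difference of mechanism rather than a gap, since the pointwise bound $\cost_\Fil(\VR(h))\le 2\cost_\cmet(h)$ you assert is the correct one.

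The genuine gap is in (c), in the direction $d_\cmet^E\le 2\dgh$. Morphisms of $\cmet$ are required to be \emph{metric order-preserving} surjections, and neither leg of an optimal tripod $X\twoheadleftarrow Z\twoheadrightarrow Y$ is such a map in general: for $\pi_Y$ to be order-preserving out of $(Z,\pi_X^*d_X)$, the relative order of all pairwise distances in $X$ would have to be reproduced in $Y$, which an optimal correspondence has no reason to satisfy. So the two-arrow zigzag you describe typically does not exist in $\cmet$, and there is nothing whose ``total cost'' one could verify equals $\dis(\mathfrak R)$. The paper's fix is to interpolate the two pullback metrics, $d_t=(1-t)\pi_X^*d_X+t\,\pi_Y^*d_Y$, and to subdivide $[0,1]$ at the finitely many parameters where the relative order of some pair of distances flips; between consecutive critical values the identity on $Z$ \emph{is} order-preserving in both directions needed, hence a legitimate morphism, and the costs of the resulting chain telescope to $2\dgh(X,Y)$. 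Without this subdivision your upper bound in (c) does not go through. (Your lower bound in (c), via composing the relations along a path and accumulating distortions --- equivalently, the triangle inequality for $\dgh$ applied to the tripods induced by each morphism --- is fine and matches the paper.)
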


The inequality above follows from the functoriality of both $\VR$ and $\PD_d$ (see~\cref{prop: unweighted vr is functor} and \cref{prop: functoriality of pd}). The Gromov–Hausdorff distance $\dgh$ quantifies how far two finite (pseudo-)metric spaces are from being isometric. The identity $d_{\cmet}^E = 2\dgh$ (see~\cref{prop: gh as edit}) arises directly from the specific construction of the category $\cmet$ and the associated definition of morphism costs.

\begin{figure}
\centering
\includegraphics[width=0.5\textwidth]{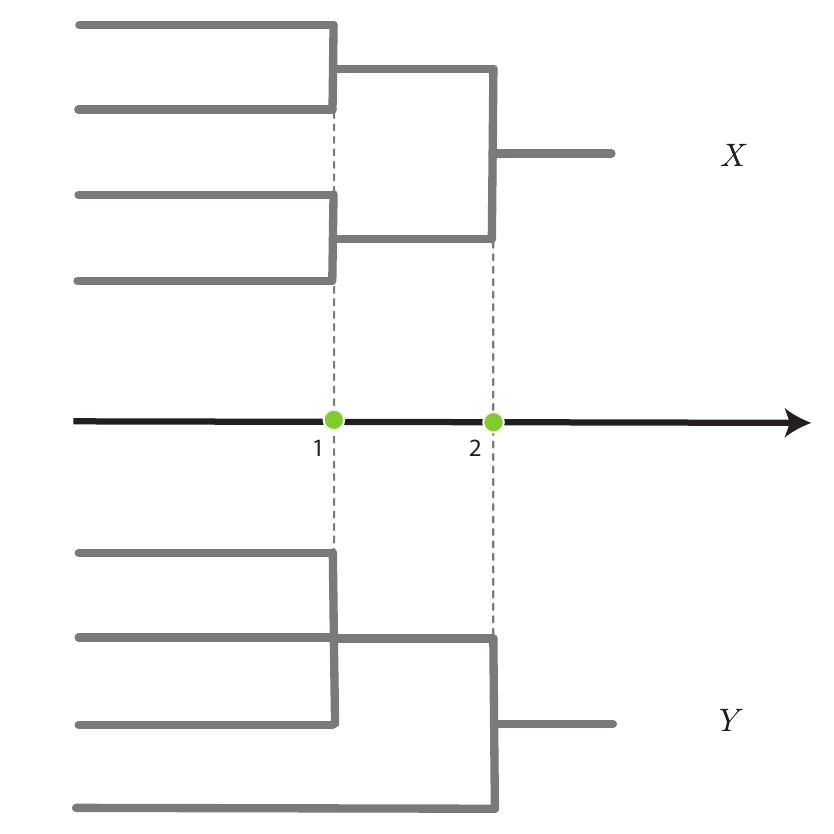}
\caption{Two ultrametric spaces $X$ and $Y$, each with 4 points, shown in the figure (through their dendrogram representations) have the same Vietoris-Rips persistence diagrams in all degrees. However, when regarded as mm-spaces (by endowing them with their respective uniform measures), their weighted Vietoris-Rips persistence diagrams are different; see \Cref{ex:ums}.}
\label{fig:ums}
\end{figure}

Second, we introduce the notions of weighted filtrations (\cref{defn: weighted filtration}) and weighted persistence diagrams (\cref{defn: weighted persistence diagrams}), extending their classical unweighted counterparts. Furthermore, we construct a functorial pipeline for obtaining weighted persistence diagrams from metric measure spaces (mm-spaces), summarized as follows:
\[
\cmm \to \wfil \to \wDgm.
\]
Here, $\cmm$ denotes the category of finite mm-spaces, $\wfil$ the category of weighted filtrations, and $\wDgm$ the category of weighted persistence diagrams. We define the weighted Vietoris--Rips filtration, denoted $\WVR$, and show that it defines a functor:
\[
\WVR : \cmm \to \wfil.
\]

Weighted Vietoris-Rips persistence diagrams capture more information than their standard counterparts. For example the two ultrametric spaces $X$ and $Y$ from \Cref{fig:ums} have the same persistence diagrams in all degrees whereas their weighted persistence diagrams are different; see \Cref{ex:ums} for details.

We assign a non-negative $p$-cost (for $p \in [1,\infty]$) to each morphism in the categories appearing in the pipeline, thereby defining corresponding $p$-edit distances, denoted $d_{\cmm}^{E,p}$, $d_{\wfil}^{E,p}$, and $d_{\wDgm}^{E,p}$. Our main result establishes the $p$-stability of weighted persistence diagrams with respect to these distances.

\begin{theorem}\label{thm: p stability of weighted persistence diagrams main thm} Let $(X,d_X,\mu_X)$ and $(Y,d_Y,\mu_Y)$ be two finite mm-spaces. Let $\WVR (X)$ and $\WVR(Y)$ denote their weighted Vietoris-Rips filtrations and let $w\textnormal{-}\PD_d^\VR(X)$ and $w\textnormal{-}\PD_d^\VR(Y)$ denote the weighted degree-$d$ persistence diagrams of $\WVR (X)$ and $\WVR(Y)$. Then, \begin{align*} d_{\wDgm}^{E,p} (w\textnormal{-}\PD_d^\VR(X), w\textnormal{-}\PD_d^\VR(Y)) &\leq 2\cdot 4^\frac{1}{p} \cdot d_{\cmm}^{E,p} ((X,d_X,\mu_X)), (Y,d_Y,\mu_Y)) \\ &= 4^\frac{p+1}{p} \cdot \dgw{p} ((X,d_X,\mu_X)), (Y,d_Y,\mu_Y)) \end{align*} \end{theorem}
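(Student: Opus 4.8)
The plan is to obtain the estimate purely formally from the functoriality of the pipeline $\cmm \to \wfil \to \wDgm$, mirroring the proof of \cref{thm: classical edit distance stability of pd}. The underlying principle is that a functor which does not increase morphism costs by more than a fixed factor must contract the induced edit distance by the same factor. Precisely, if $F\colon \mathcal{C}\to\mathcal{D}$ is a functor between cost-decorated categories satisfying $\cost_{\mathcal{D}}(F(f)) \le L\cdot\cost_{\mathcal{C}}(f)$ for every morphism $f$, then $d_{\mathcal{D}}^{E,p}(F(a),F(b)) \le L\cdot d_{\mathcal{C}}^{E,p}(a,b)$. This holds because $d^{E,p}(a,b)$ is the infimum, over zigzag paths of morphisms joining $a$ and $b$, of the $\ell^p$-aggregate of the path's costs; since $F$ carries such a path to a path joining $F(a)$ and $F(b)$ and the $\ell^p$-aggregate is positively homogeneous, each path cost is scaled by at most $L$, and the inequality survives passage to the infimum.

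I would apply this twice. First, to the weighted persistence-diagram functor $w\textnormal{-}\PD_d\colon\wfil\to\wDgm$, which is a contraction ($L=1$) by the weighted $\ell^p$-analogue of \cite[Theorem 8.4]{mccleary2022edit}; here Rota's Galois Connection Theorem ensures that M\"obius inversion sends each generating morphism of $\wfil$ to one of $\wDgm$ without increasing its $p$-cost. Second, to the weighted Vietoris--Rips functor $\WVR\colon\cmm\to\wfil$, for which I must prove the Lipschitz bound
\[
d_{\wfil}^{E,p}\bigl(\WVR(X),\WVR(Y)\bigr) \;\le\; 2\cdot 4^{1/p}\cdot d_{\cmm}^{E,p}(X,Y).
\]
Chaining the contraction of $w\textnormal{-}\PD_d$ with this bound gives the first inequality of the theorem. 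The final equality is then immediate from the identity $d_{\cmm}^{E,p} = 2\cdot\dgw{p}$ --- the $p$-weighted counterpart of $d_{\cmet}^{E}=2\,\dgh$ in \cref{prop: gh as edit} --- together with the arithmetic $2\cdot 4^{1/p}\cdot 2 = 4\cdot 4^{1/p} = 4^{(p+1)/p}$.

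The crux of the argument is the Lipschitz estimate for $\WVR$, and in particular the precise constant $2\cdot 4^{1/p}$. Since path costs aggregate in $\ell^p$ and $\WVR$ sends paths to paths, it suffices to verify the bound on a generating set of elementary morphisms of $\cmm$. For each such morphism two effects must be controlled separately: the perturbation of the \emph{metric}, which only shifts the filtration values at which simplices enter $\WVR$ and therefore contributes the same factor $2$ as in the unweighted \cref{thm: classical edit distance stability of pd}; and the perturbation of the \emph{measure}, which changes the weights decorating the simplices. The weight of a simplex is assembled from the masses of its vertices --- in the spirit of the distance distribution $(d_X)_{*}(\mu_X\otimes\mu_X)$ --- so a single elementary mass-transport move is ``felt'' at both endpoints of each affected edge; bounding the $\ell^p$-aggregate of these simplexwise weight changes by the $p$-cost of the move in $\cmm$ produces, after the $\ell^p$ normalization, the factor $4^{1/p}=(2^2)^{1/p}$. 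The delicate point is that these two contributions must be shown to combine \emph{multiplicatively} into $2\cdot 4^{1/p}$ across every morphism type, rather than additively; this relies on the cost in $\wfil$ being a mass-weighted $\ell^p$ aggregate of filtration-value shifts, so that the metric factor enters linearly on the shifts while the measure factor enters inside the $p$-th powers on the masses. As a consistency check, the constant degenerates to the unweighted value $2$ as $p\to\infty$, matching \cref{thm: classical edit distance stability of pd}.
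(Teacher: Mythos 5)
Your overall architecture is the same as the paper's: establish that $\WVR:\cmm\to\wfil$ and $w\textnormal{-}\PD_d:\wfil\to\wDgm$ are functors with controlled cost inflation, chain the two Lipschitz bounds via \cref{prop: general edit distance stability}, and finish with $d_{\cmm}^{E,p}=2\dgw{p}$ (\cref{prop: p-gw as edit}). However, your allocation of the constants between the two steps is reversed relative to the paper, and the step you treat as free is precisely where the paper has to pay. The paper proves $\cost_{\wfil}^p(\WVR(h))\le 2\cdot\cost_{\cmm}^p(h)$ (\cref{prop: weighted vr is functor}) --- the measure causes no loss there, because the global distribution of distances is pushed forward \emph{exactly} along the induced Galois connection; the factor $2$ comes from the convexity bound $|a-b|^p\le 2^{p-1}(|a|^p+|b|^p)$ applied to differences of distance-distortions, exactly as in the unweighted case. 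The factor $4^{1/p}$ arises instead in the second step: $\cost_{\wDgm}^p \le 4^{1/p}\cdot\cost_{\wfil}^p$ (\cref{prop: weighted pd are functorial}), because the displacement $\defo(I_1,I_2,\bar f(I_1),\bar f(I_2))$ is a maximum over the four corner differences of a pair of intervals, and bounding $\max^p$ by the sum of the four $p$-th powers against the flipped product measure $\mu_{\bar Q,\flip}\otimes\mu_{\bar Q,\flip}$ yields $4\cdot(\dis_p(f))^p$.

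Your claim that $w\textnormal{-}\PD_d$ is a contraction ($L=1$) for finite $p$ is therefore not merely unproved but false for the cost structures defined in the paper. A two-point example already shows it: take $Q=\{0,1\}$ with uniform $\mu_Q$ and a left adjoint $f$ moving $1$ to $1+\delta$; then $(\dis_p(f))^p=\delta^p/2$, while summing $\defo^p$ over all pairs of intervals in $\bar Q$ against $\mu_{\bar Q,\flip}\otimes\mu_{\bar Q,\flip}$ gives $\tfrac{7}{8}\delta^p$, so the ratio of costs is $(7/4)^{1/p}>1$. Rota's Galois Connection Theorem gives functoriality, not cost contraction, and the ``weighted $\ell^p$-analogue of Theorem 8.4'' you invoke does not exist with constant $1$. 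Correspondingly, your mechanism for extracting $4^{1/p}$ from the $\cmm\to\wfil$ step (mass transport felt ``at both endpoints of each affected edge'') does not reflect the actual computation; that step is exact on measures. Because the product of your two constants coincides with the paper's, the final inequality you state is correct, but the proof as planned breaks at the first functor. A smaller point: the edit distance in \cref{defn: edit dist} aggregates the per-morphism $p$-costs along a path by a plain sum, not an $\ell^p$-aggregate; this does not affect the validity of \cref{prop: general edit distance stability}, but your justification of it should be stated accordingly.
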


The inequality above follows from the functoriality of the pipeline; see~\cref{prop: weighted vr is functor} and~\cref{prop: weighted pd are functorial}. The $p$-Gromov–Wasserstein distance, denoted $\dgw{p}$, quantifies how far two mm-spaces are from being isomorphic. The identity $d_{\cmm}^{E,p} = 2 \cdot \dgw{p}$ (see~\cref{prop: p-gw as edit}) arises—analogously to the case of $\dgh$—from the specific construction of the category $\cmm$ and the definition of morphism costs within it.

Although ($p$-)edit distance stability follows directly from functoriality, the precise meaning of the ($p$-)edit distance could remain unclear at first glance. In~\cref{sec: ot-like interpret}, we demonstrate that the ($p$-)edit distance between (weighted) persistence diagrams can be reformulated independently of category theory or the formal definition of edit distance. These alternative formulations—given in~\cref{prop: interpretation unweighted} and~\cref{prop: OT formulation of dEp}—provide an Optimal Transport-like interpretation of the ($p$-)edit distance, thereby enhancing its conceptual clarity and interpretability.

Persistence diagrams are widely used in practice for their ability to compactly encode geometric information, particularly in applications involving shape characterization. In~\cref{sec: discriminating power}, we investigate the discriminative power of \emph{weighted persistence diagrams}, which, in the case of Vietoris-Rips filtrations of finite metric spaces, combine two sources of information: classical persistence diagrams and the global distribution of distances. We show that this synthesis enhances discriminative strength beyond what either component provides on its own. Notably, the $p$-edit distance is most discriminative when $p = \infty$. For finite values of $p$, the distance becomes insensitive to the persistence diagram component and depends only on the weight distribution (\cref{prop: independence from diagram finite p}). Nevertheless, it can still distinguish between weighted persistence diagrams even when the weights, viewed as mm-spaces, are isomorphic—thanks to its sensitivity to the ordering of intervals (see~\cref{ex: same gdd but displacement distinguishes}).


\subsection{Acknowledgements}
This work was partially supported by NSF DMS \#2301359, NSF CCF \#2310412 and NSF RI \#1901360.
\section{Preliminaries}
\label{sec:prelim}

In this section, we recall the background concepts that will be used in the rest of the paper.

\subsection{Posets and Galois Connections}\label{sec:lattices}

A \emph{partially ordered set} (poset) is a set $X$ with a 
reflexive, antisymmetric, and transitive relation $\leq$.
A \emph{Galois connection} between two posets $P$ and $Q$ is a pair, $(f,g)$, of order-preserving maps $f : P \to Q$ and $g : Q \to P$ such that
\[
f(p) \leq q \iff p \leq g(q)
\]
for every $p \in P$ and $q\in Q$. We refer to $f$ as the~\emph{left adjoint} and refer to $g$ as the~\emph{right adjoint}. We will also use the notation $f : P \leftrightarrows Q : g$ to denote a Galois connection.

We say that an order-preserving map $f:P \to Q$ has a right adjoint (or, admits a right adjoint) if there is an order-preserving map $g : Q\to P$ such that the pair $(f,g)$ is a Galois connection. Note that, if an order-preserving map $f: P \to Q$ has a right adjoint $g$, then, it must be unique and given by
\[
g(q) := \max \{ p\in P \mid f(p) \leq q\}.
\]

\paragraph{M\"obius Inversion.} Let $P$ be a poset and $\mathcal{G}$ be an abelian group. Let $m : P \to \mathcal{G}$ be any function. The \emph{M\"obius inverse} of $m$ is defined to be the unique function $\partial_P m : P \to \mathcal{G}$, if it exists, satisfying
\[
m (p) = \sum_{p' \leq p} \partial_P m (p')
\]
for all $p \in P$.

\begin{definition}[Pushforward and pullback]
    Let $f: P \to Q$ be any map between two sets, and let $m : P \to \mathcal{G}$ be any function. The~\emph{pushforward} of $m$ along $f$ is the function $f_\sharp m : Q \to \mathcal{G}$ given by
    $$f_\sharp m (q) := \sum_{p \in f^{-1}(q)} m(p).$$
    Let $h : Q \to \mathcal{G}$ be any function. The~\emph{pullback} of $h$ along $f$ is the function $f^\sharp h : P \to \mathcal{G}$ given by
$$(f^\sharp h )(p) := h( f(p)).$$
    
\end{definition}

\begin{theorem}[Rota's Galois Connection Theorem (RGCT)~{\cite[Theorem 3.1]{gal-conn}}]\label{thm:rgct}
    Let $P$ and $Q$ be finite posets and $f : P \leftrightarrows Q : g$ be a Galois connection. Then,
    \begin{equation}\label{eqn: rgct push vs pull}
        (f)_\sharp \circ \partial_P = \partial_Q \circ (g)^\sharp.
    \end{equation}
\end{theorem}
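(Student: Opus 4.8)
The plan is to avoid any explicit formula for the Möbius function and instead work directly with the defining property of Möbius inversion, exploiting the fact that on a finite poset the summation (zeta) operator is invertible with inverse $\partial$. Write $\zeta_P$ for the operator $(\zeta_P \phi)(p) := \sum_{p' \leq p} \phi(p')$, and likewise $\zeta_Q$ on $Q$. By definition of the Möbius inverse we have $\zeta_P \circ \partial_P = \mathrm{id}$ and, on finite posets, $\partial_Q \circ \zeta_Q = \zeta_Q \circ \partial_Q = \mathrm{id}$, so $\zeta_Q$ is a bijection on functions $Q \to \mathcal{G}$.

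First I would fix an arbitrary function $m : P \to \mathcal{G}$ and set $n := \partial_P m$, so that $m = \zeta_P n$. Both sides of the claimed identity \eqref{eqn: rgct push vs pull}, evaluated at $m$, are functions $Q \to \mathcal{G}$: the left side is $f_\sharp n$, and the right side is $\partial_Q(g^\sharp m)$. Since $\zeta_Q$ is invertible, it suffices to show the equivalent identity obtained by applying $\zeta_Q$ to both sides, namely $\zeta_Q(f_\sharp n) = g^\sharp m$.

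Next I would evaluate both sides at an arbitrary $q \in Q$ and expand. For the left side, unwinding the definitions of $\zeta_Q$ and the pushforward gives
\[
\bigl(\zeta_Q(f_\sharp n)\bigr)(q) = \sum_{q' \leq q} \sum_{p \in f^{-1}(q')} n(p) = \sum_{\{p \,:\, f(p) \leq q\}} n(p),
\]
where the double sum collapses into a single sum over the union of fibers $\bigcup_{q' \leq q} f^{-1}(q') = \{p : f(p) \leq q\}$. For the right side, the pullback and $m = \zeta_P n$ give
\[
(g^\sharp m)(q) = m(g(q)) = (\zeta_P n)(g(q)) = \sum_{\{p \,:\, p \leq g(q)\}} n(p).
\]

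The crux is then the observation that the two index sets coincide: $\{p : f(p) \leq q\} = \{p : p \leq g(q)\}$. This is exactly the defining biconditional $f(p) \leq q \iff p \leq g(q)$ of the Galois connection $f : P \leftrightarrows Q : g$. Consequently the two sums agree termwise, establishing $\zeta_Q(f_\sharp n) = g^\sharp m$ and hence the theorem. I do not expect a genuine obstacle here; the only points requiring care are the bookkeeping in collapsing the nested sum over fibers into a single sum, and keeping straight which operator ($\partial$ versus $\zeta$) is being inverted at the reduction step. The entire content of the theorem is carried by the Galois adjunction, which forces the two summation ranges to match.
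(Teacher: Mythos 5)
Your proof is correct. The paper does not supply its own argument for this statement---it is quoted from \cite[Theorem 3.1]{gal-conn}---but what you give is the standard proof: reduce the identity to $\zeta_Q(f_\sharp n) = g^\sharp m$ by invertibility of the zeta operator on a finite poset, collapse the sum over fibers into a sum over $\{p : f(p) \leq q\}$, and use the adjunction $f(p) \leq q \iff p \leq g(q)$ to identify that index set with $\{p : p \leq g(q)\}$. The only points left implicit---that $\partial_P m$ exists for every $m$ on a finite poset and that $\zeta_Q$ is injective (so that applying $\zeta_Q$ to both sides is a legitimate reduction)---are standard facts, provable by induction along a linear extension, and worth a sentence if you want the argument fully self-contained.
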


\subsection{Simplicial Complexes and Filtrations}

An (abstract) \emph{simplicial complex} $K$ over a finite vertex set $V$ is a non-empty collection of non-empty subsets of $V$ with the property that for every $\sigma \in K$, if $\tau\subseteq \sigma$, then $\tau \in K$. An element $\sigma \in K$ is called a \emph{$d$-simplex} if the cardinality of $\sigma$ is $d+1$. An \emph{orientation} on a simplex is an ordering of its vertices up to an even permutation.

An \emph{oriented simplex}, denoted $[\sigma]$, is a simplex $\sigma \in K$ whose vertices are ordered. We always assume that ordering on simplicies is inherited from a pre-determined ordering on $V$. Let $\mathcal{S}_d^K$ denote the set of all oriented $d$-simplicies of $K$.

The \emph{$d$-th chain group of $K$}, denoted $C_d^K$, is the vector space over a fixed field $\Bbbk$ with basis $\mathcal{S}_d^K$. Let $n_d^K := |\mathcal{S}_d^K| = \dim_{\Bbbk} (C_d^K)$. 

The \emph{$d$-th boundary map} $\partial_d^K : C_d^K \to C_{d-1}^K$ is defined by
\[
    \partial_d^K ([v_0,\ldots,v_d]) := \sum_{i=0}^{d}(-1)^i[v_0,\ldots,\hat{v}_i,\ldots,v_d]
\]
for every oriented $d$-simplex $\sigma = [v_0,\ldots,v_d]\in \mathcal{S}_d^K$, where $[v_0,\ldots,\hat{v}_i,\ldots,v_d]$ denotes the omission of the $i$-th vertex, and extended linearly to $C_d^K$. We denote by $\Zfunc_d(K)$ the space of $d$-cycles of $K$, that is $\Zfunc_d(K) := \ker (\partial_d^K)$, and we denote by $\Bfunc_d(K)$ the space of $d$-boundaries of $K$, that is $\Bfunc_d(K):= \ima (\partial_{d+1}^K)$. Additionally, we denote by $H_d(K)$ the $d$-th homology group of $K$, that is $H_d(K) := \Zfunc_d(K) / \Bfunc_d(K)$.

For a finite simplicial complex $K$, let $\subcx(K)$ denote the poset of subcomplexes of $K$, ordered by inclusion. A \emph{simplicial filtration of $K$} is an order-preserving map $\Ffunc : P \to \subcx (K)$, where $P$ is a finite poset with a maximum element $\top_P$ such that $\Ffunc(\top_P) = K$. A \emph{$1$-parameter} filtration is filtration over a finite linearly ordered set $P = \{0 = p_0 < p_1<\cdots<p_n \}\subseteq [0,\infty)$. When $\Ffunc : \{ 0 = p_0 < p_1<\cdots<p_n\} \to \subcx (K)$ is a $1$-paramater filtration, we use the notation $K_i$ for the simplicial complex $\Ffunc(p_i)$ and write $\Ffunc = \{K_i \}_{i=0}^n$ to denote the simplicial filtration. Note that $K_n = K$.

\subsection{Edit Distance}
\label{subsec: edit}
Let $\mathcal{C}$ be a category and assume that for every object $A$ and $B$ in $\mathcal{C}$ and for every morphism $f : A \to B$ from $A$ to $B$, there is a non-negative cost, denoted $\cost_\mathcal{C} (f) \in \RR_+$, associated to the morphism. For two objects $A$ and $B$ in this category, a~\emph{path}, $\mathcal{P}$, is a finite sequence of morphisms
\[
\mathcal{P} : \; \; A \xleftrightarrow{\makebox[1cm]{$f_1$}} D_1 \xleftrightarrow{\makebox[1cm]{$f_2$}} \cdots \xleftrightarrow{\makebox[1cm]{$f_{k-1}$}} D_{k-1} \xleftrightarrow{\makebox[1cm]{$f_k$}} B
\]
where $D_i$s are objects in $\mathcal{C}$ and $\leftrightarrow$ indicates a morphism in either direction. The~\emph{cost of a path} $\mathcal{P}$, denoted $\cost_\mathcal{C} (\mathcal{P})$, is the sum of the cost of all morphisms in the path:
\[
\cost_\mathcal{C} (\mathcal{P}) := \sum_{i=1}^k \cost_\mathcal{C} (f_i).
\]

\begin{definition}[Edit distance]\label{defn: edit dist}
    The~\emph{edit distance} $d_{\mathcal{C}}^E (A,B)$ between two objects $A$ and $B$ in $\mathcal{C}$ is the infimum, over all paths between $A$ and $B$, of the cost of such paths:
    \[
    d_\mathcal{C}^E(A,B) := \inf_{\mathcal{P}} \cost_\mathcal{C}(\mathcal{P}).
    \]
\end{definition}

The following proposition will serve as the main tool for establishing stability results in this paper.

\begin{prop}[Functorial edit distance stability]\label{prop: general edit distance stability}
    Let $\mathcal{C}$ and $\mathcal{D}$ be two categories such that for every morphism in these categories $f$ in $\mathcal{C}$ and $g$ in $\mathcal{D}$, there are costs $\cost_{\mathcal{C}} (f), \cost_{\mathcal{D}}(g) \in \RR_+$ associated to them. Let $\Ffunc : \mathcal{C} \to \mathcal{D}$ be a functor and $M \in (0,\infty)$ be such that for every morphism $f$ in $\mathcal{C}$, it holds that
    \[
    \cost_{\mathcal{D}} (\Ffunc (f)) \leq M \cdot \cost_{\mathcal{C}} (f)
    \]
    Then, for any two object $A$ and $B$ in $\mathcal{C}$, we have
    \[
    d_{\mathcal{D}}^E (\Ffunc(A), \Ffunc(B)) \leq M \cdot d_{\mathcal{C}}^E (A,B).
    \]
\end{prop}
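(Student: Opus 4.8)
The plan is to lift an arbitrary path in $\mathcal{C}$ to a path in $\mathcal{D}$ by applying the functor $\Ffunc$ morphism-by-morphism, to control the cost of the lifted path using the hypothesis, and then to pass to the infimum that defines the edit distance. Since everything is built on top of \cref{defn: edit dist}, the argument is essentially a bookkeeping statement about how functors interact with paths and costs.

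First I would fix any path $\mathcal{P}$ from $A$ to $B$ in $\mathcal{C}$, written
$\mathcal{P}: A \xleftrightarrow{f_1} D_1 \xleftrightarrow{f_2} \cdots \xleftrightarrow{f_k} B$,
where each $\leftrightarrow$ denotes a morphism in either direction. Applying $\Ffunc$ to each $f_i$ produces morphisms $\Ffunc(f_i)$ in $\mathcal{D}$. Because $\Ffunc$ is a (covariant) functor, it sends a morphism between two consecutive objects to a morphism between their images while \emph{preserving the orientation} of the arrow, and it sends the endpoints $A$ and $B$ to $\Ffunc(A)$ and $\Ffunc(B)$. Hence the sequence
$\Ffunc(\mathcal{P}): \Ffunc(A) \xleftrightarrow{\Ffunc(f_1)} \Ffunc(D_1) \xleftrightarrow{\Ffunc(f_2)} \cdots \xleftrightarrow{\Ffunc(f_k)} \Ffunc(B)$
is again a legitimate path, now from $\Ffunc(A)$ to $\Ffunc(B)$ in $\mathcal{D}$, with exactly the same underlying zig-zag shape as $\mathcal{P}$.

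Next I would bound the cost of the lifted path by applying the per-morphism hypothesis term-by-term:
\[
\cost_\mathcal{D}(\Ffunc(\mathcal{P})) = \sum_{i=1}^k \cost_\mathcal{D}(\Ffunc(f_i)) \leq \sum_{i=1}^k M\cdot \cost_\mathcal{C}(f_i) = M\cdot \cost_\mathcal{C}(\mathcal{P}).
\]
Since $\Ffunc(\mathcal{P})$ is a particular path between $\Ffunc(A)$ and $\Ffunc(B)$, the definition of edit distance gives $d_\mathcal{D}^E(\Ffunc(A),\Ffunc(B)) \leq \cost_\mathcal{D}(\Ffunc(\mathcal{P})) \leq M\cdot \cost_\mathcal{C}(\mathcal{P})$ for \emph{every} path $\mathcal{P}$ from $A$ to $B$ in $\mathcal{C}$. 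Taking the infimum over all such $\mathcal{P}$ on the right-hand side yields $d_\mathcal{D}^E(\Ffunc(A),\Ffunc(B)) \leq M\cdot d_\mathcal{C}^E(A,B)$, as claimed.

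There is no genuinely hard step here; the only point that deserves a sentence of care is the first one, namely checking that the functorial image of a zig-zag path is again a valid path. This is immediate from functoriality: composable arrows stay composable, identities go to identities, and the orientation of each arrow is preserved, so no reversal or re-bracketing issues can arise. I would also dispose of the degenerate case in which no path connects $A$ and $B$ in $\mathcal{C}$: there the infimum over the empty set makes $d_\mathcal{C}^E(A,B) = +\infty$ and the asserted inequality holds vacuously.
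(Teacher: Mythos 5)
Your proposal is correct and follows essentially the same route as the paper's proof: apply $\Ffunc$ to an arbitrary path, bound the lifted path's cost term-by-term using the hypothesis, and pass to the infimum. The only differences are cosmetic — you make explicit the infimum step and the vacuous case of no connecting path, which the paper leaves implicit.
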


\begin{proof}
    Let $A$ and $B$ be two objects in $\mathcal{C}$ and let 
    \[
    \mathcal{P} : \; \; A \xleftrightarrow{\makebox[1cm]{$f_1$}} D_1 \xleftrightarrow{\makebox[1cm]{$f_2$}} \cdots \xleftrightarrow{\makebox[1cm]{$f_{k-1}$}} D_{k-1} \xleftrightarrow{\makebox[1cm]{$f_k$}} B
    \]
    be a path between $A$ and $B$ in $\mathcal{C}$. Since $\Ffunc$ is functor, we obtain the following path
    \[
    \Ffunc(\mathcal{P}) : \; \; \Ffunc(A) \xleftrightarrow{\makebox[1cm]{$\Ffunc(f_1)$}} \Ffunc(D_1) \xleftrightarrow{\makebox[1cm]{$\Ffunc(f_2)$}} \cdots \xleftrightarrow{\makebox[1cm]{$\Ffunc(f_{k-1})$}} \Ffunc(D_{k-1}) \xleftrightarrow{\makebox[1cm]{$\Ffunc(f_k)$}} \Ffunc(B)
    \]
    between $\Ffunc(A)$ and $\Ffunc(B)$. And, we have that $\cost_{\mathcal{D}}(\Ffunc(f_i)) \leq M \cdot \cost_{\mathcal{C}} (f_i)$ for every $i = 1,2, \ldots, k$. Therefore, we have that
    \[
    \cost_{\mathcal{D}} (\Ffunc (\mathcal{P})) \leq M \cdot \cost_{\mathcal{C}} (\mathcal{P}).
    \]
    Hence, we obtain 
    \[
    d_{\mathcal{D}}^E (\Ffunc(A), \Ffunc(B)) \leq M \cdot d_{\mathcal{C}}^E (A,B).
    \]
\end{proof}

\subsection{Metric Spaces}
A \emph{pseudo-metric space} is a pair $(X, d_X)$, where $X$ is a non-empty set 
and $d_X : X \times X \to \RR_+ \cup \{ \infty \}$ a function
such that, for all $a,b,c \in X$, the following conditions hold:
    \begin{itemize}
        \item $d_X(a,b) = d_X(b,a)$
        \item $d_X(a,c) \leq d_X(a,b) + d_X(b,c)$.
    \end{itemize}
Note that these conditions imply $d_X(a,b) \geq 0$, for all $a,b \in X$. A pseudo-metric space $(X,d_X)$ is called a metric space if $d_X(a,b) = 0 \iff a = b$.
A \emph{$1$-Lipschitz} map from a metric space $(X, d_X)$ to 
a metric space $(Y, d_Y)$
is a function $f : X \to Y$ such that for all $a,b \in X$,
$d_Y \big( f(a), f(b) \big) \leq d_X (a,b)$.
Throughout this paper, unless explicitly mentioned otherwise, we operate under the assumption that (pseudo)-metric spaces are finite.

\begin{definition}[Distortion]
    The \emph{distortion} of a set map $f : X \to Y$ between two (pseudo)-metric spaces $(X,d_X)$ and $(Y,d_Y)$ is
\[
\dis (f) := \max_{x,x' \in X} |d_X(x,x') - d_Y (f(x), f(x'))|.
\]

\end{definition}

\begin{definition}[Product]
    The \emph{product} of two (pseudo)-metric spaces $(X, d_X)$ and $(Y, d_Y)$ is the 
(pseudo)-metric space $\left(X \times Y, d_{X \times Y}\right)$ where
$$d_{X\times Y} \left( (x,y), (x',y') \right) := \max \big\{ d_X(x,x'), d_Y(y,y') \big \}.$$    
\end{definition}

Note that the two projections $\pi_X : (X \times Y, d_{X \times Y}) \to (X, d_X)$ and
$\pi_Y : (X \times Y, d_{X \times Y}) \to (Y, d_Y)$ are $1$-Lipschitz.

\paragraph{Gromov-Hausdorff Distance.}

Let $(X,d_X)$ be a (not necessarily finite) metric space and let $Z_1$ and $Z_2$ be two subsets of $X$. The \emph{Hausdorff distance} between $Z_1$ and $Z_2$ is defined to be

\[
d_{\mathrm{H}}^X (Z_1, Z_2) := \max \left \{  \sup_{z_1 \in Z_1} d_X (z_1, Z_2) , \sup_{z_2 \in Z_2} d_X (Z_1, z_2)  \right \} .
\]

For two metric spaces $(X,d_X)$ and $(Y, d_Y)$ the \emph{Gromov-Hausdorff distance},~\cite{EDWARDS1975121, gromov07}, between them is defined as the infimum $r > 0$ for which there exists a metric space $(Z,d_Z)$ together with two isometric embeddings $\psi_X : (X,d_X) \hookrightarrow (Z,d_Z)$ and $\psi_Y : (Y,d_Y) \to (Z,d_Z)$ such that $d_{\mathrm{H}}^Z (\psi_X (X), \psi_Y(Y)) < r$, that is,

\[
\dgh (X,Y) := \inf_{Z, \psi_X, \psi_Y}  d_{\mathrm{H}}^Z \left(\psi_X (X), \psi_Y(Y)\right) .
\]

A \emph{tripod} between two sets $X$ and $Y$ is a pair of surjections $\phi_X$ and $\phi_Y$ from a set $Z$ to $X$ and $Y$ respectively, cf.~\cite[Section 4.1.1]{memoli17}. We express this data through the diagram
\begin{center}
    \begin{tikzcd}
    \mathfrak R: & X & Z \arrow[l, "\phi_X"', two heads] \arrow[r, "\phi_Y", two heads] & Y
    \end{tikzcd}
\end{center}

For two (not necessarily finite) metric spaces $(X,d_X)$ and $(Y,d_Y)$, the \emph{distortion} of a tripod $\mathfrak{R}$ between $X$ and $Y$ is defined as
\[
\dis (\mathfrak{R}) := \sup_{z_1, z_2 \in Z} \big | d_X (\phi_X(z_1), \phi_X(z_2)) - d_Y (\phi_Y(z_1), \phi_Y(z_2) \big |.
\]

\begin{theorem}[{\cite[Theorem 7.3.25]{burago2001}}]\label{thm: tripod version gh}
    For any two bounded metric spaces $X$ and $Y$,
    \[
    \dgh (X,Y)  =\frac{1}{2} \inf \left\{ \dis(\mathfrak{R}) \mid \mathfrak{R} : X \xtwoheadleftarrow{\phi_X} Z \xtwoheadrightarrow{\phi_y} Y \right \}
    \]
\end{theorem}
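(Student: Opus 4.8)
The plan is to prove the two inequalities separately, after observing that a tripod $\mathfrak{R}: X \twoheadleftarrow Z \twoheadrightarrow Y$ is interchangeable with a \emph{correspondence}, i.e. a relation $R \subseteq X \times Y$ whose projections to both factors are surjective: the tripod determines $R = \{(\phi_X(z), \phi_Y(z)) : z \in Z\}$, and conversely $R$ itself (with its two coordinate projections) is a tripod. Under this dictionary the distortion becomes $\dis(R) = \sup_{(x,y),(x',y') \in R} |d_X(x,x') - d_Y(y,y')|$, and it suffices to show $\dgh(X,Y) = \tfrac{1}{2}\inf_R \dis(R)$.

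For the inequality $\dgh(X,Y) \le \tfrac{1}{2}\inf_R \dis(R)$, I would fix a correspondence $R$ with $r := \dis(R)$ and build an ambient space by metrizing the disjoint union $X \sqcup Y$. Retaining $d_X$ and $d_Y$ on each summand, I set, for $x \in X$ and $y \in Y$, $d(x,y) := \inf_{(a,b) \in R}\big(d_X(x,a) + \tfrac{r}{2} + d_Y(b,y)\big)$. The key points to verify are that $d$ is a pseudo-metric and that the inclusions $X \hookrightarrow X \sqcup Y$ and $Y \hookrightarrow X \sqcup Y$ remain isometric, i.e. the detours through the opposite summand must not shorten intra-summand distances. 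The nontrivial triangle inequality is $d_X(x,x') \le d(x,y) + d(x',y)$: writing the right side as a single infimum over $(a,b),(a',b') \in R$, one uses the triangle inequality in $Y$ to get $d_Y(b,y) + d_Y(b',y) \ge d_Y(b,b')$, then the distortion bound $d_Y(b,b') \ge d_X(a,a') - r$, and finally the triangle inequality in $X$ along $x \to a \to a' \to x'$; the two copies of $\tfrac{r}{2}$ exactly absorb the $-r$. Surjectivity of the projections of $R$ gives, for each $x$, a pair $(x,b) \in R$ with $d(x,b) \le \tfrac{r}{2}$, and symmetrically for each $y$, so $d_{\mathrm{H}}(X,Y) \le \tfrac{r}{2}$ in this ambient space and hence $\dgh(X,Y) \le \tfrac{r}{2}$.

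For the reverse inequality $\tfrac{1}{2}\inf_R \dis(R) \le \dgh(X,Y)$, I would start from an almost-optimal realization: given $\eps > 0$, pick a metric space $W$ with isometric embeddings $\psi_X, \psi_Y$ and $d_{\mathrm{H}}^W(\psi_X(X), \psi_Y(Y)) < \dgh(X,Y) + \eps =: r$. Define $R := \{(x,y) : d_W(\psi_X(x), \psi_Y(y)) < r\}$; the Hausdorff bound forces both projections to be surjective, so $R$ is a correspondence. For $(x,y),(x',y') \in R$, two applications of the triangle inequality in $W$ (inserting $\psi_Y(y), \psi_Y(y')$ along the chain from $\psi_X(x)$ to $\psi_X(x')$, and then the symmetric chain) give $|d_X(x,x') - d_Y(y,y')| < 2r$, whence $\dis(R) \le 2\big(\dgh(X,Y) + \eps\big)$; letting $\eps \to 0$ finishes this direction.

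The main obstacle is the verification in the first direction that the glued space genuinely restricts to $d_X$ on $X$ and to $d_Y$ on $Y$, i.e. that the cross-term definition does not create a shortcut contracting distances within a single summand. This is precisely where the distortion hypothesis and the choice of gap $\tfrac{r}{2}$ must be balanced against one another, and it is the only place where $\dis(R)$ enters quantitatively; everything else is routine bookkeeping with the triangle inequality.
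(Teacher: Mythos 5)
Your argument is correct, but note that the paper does not prove this statement at all — it is quoted directly from Burago–Burago–Ivanov (Theorem 7.3.25), with the tripod/correspondence translation delegated to Example 7.3.19 and Exercise 7.3.20 of that book. What you have written is essentially the standard textbook proof from that reference: the gluing construction $d(x,y)=\inf_{(a,b)\in R}\bigl(d_X(x,a)+\tfrac{r}{2}+d_Y(b,y)\bigr)$ with the distortion bound absorbing the two half-gaps for one inequality, and the near-optimal embedding producing the correspondence $R=\{(x,y): d_W(\psi_X(x),\psi_Y(y))<r\}$ with a $2r$ distortion bound for the other; both directions check out, so there is nothing to reconcile with the paper beyond observing that it offers no proof of its own.
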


Note that, in the original statement of the above theorem in~\cite{burago2001}, $\mathfrak{R}$ is assumed to be a \emph{correspondence}, a notion that can be substituted with tripods, as elucidated in~\cite[Example 7.3.19, Exercise 7.3.20]{burago2001}. Hence, we express it in terms of tripods here. Note also that~\cref{thm: tripod version gh} can be utilized to define the notion of Gromov-Hausdorff distance between finite pseudo-metric spaces as follows.

\begin{definition}[Gromov-Hausdorff distance]
    Let $(X,d_X)$ and $(Y,d_Y)$ be two finite pseudo-metric spaces. Then, the \emph{Gromov-Hausdorff distance} between $(X,d_X)$ and $(Y,d_Y)$ is defined as
    \[
    \dgh(X,Y) := \frac{1}{2} \inf \left\{ \dis(\mathfrak{R}) \mid \mathfrak{R} : X \xtwoheadleftarrow{\phi_X} Z \xtwoheadrightarrow{\phi_y} Y \right \},
    \]
    where 
    \[
    \dis (\mathfrak{R}) := \max_{z_1, z_2 \in Z} \big | d_X (\phi_X(z_1), \phi_X(z_2)) - d_Y (\phi_Y(z_1), \phi_Y(z_2) \big |.
    \]
\end{definition}

\subsection{Measure Spaces}

A \emph{measurable space} is a pair $(X, \Sigma)$, where $X$ is a set and $\Sigma$
a set of subsets of $X$ called a $\sigma$-algebra satisfying the following conditions:
    \begin{itemize}
        \item $X \in \Sigma$
        \item if $A \in \Sigma$, then its complement $X \setminus A$
        is in $\Sigma$
        \item the union of countably many elements of $\Sigma$ is an
        element of $\Sigma$.
    \end{itemize}
If $X$ is a topological space, the smallest $\sigma$-algebra
generated by its set of open sets is called the \emph{$\sigma$-algebra
of Borel sets}.
A \emph{measurable function} from a measurable space $(X_1, \Sigma_1)$ to a measurable
space $(X_2, \Sigma_2)$, written $f : (X_1, \Sigma_1) \to (X_2, \Sigma_2)$, 
is a function $f : X_1 \to X_2$ such that for every $A_2 \in \Sigma_2$,
$f^{-1}(A_2) \in \Sigma_1$.

A \emph{measure} on a measurable space $(X, \Sigma)$ is a
function $\mu : \Sigma \to \RR \cup \{ \infty \}$
satisfying the following conditions:
    \begin{itemize}
        \item for all $A \in \Sigma$,  $\mu(A) \geq 0$
        \item $\mu(\emptyset) = 0$
        \item for all pairwise disjoint countable family of subsets $\{ A_i \}_{i = 1}^{\infty}$ in $\Sigma$, 
        $\mu \big( \bigcup_{i = 1}^\infty A_i \big) = \sum_{i=1}^\infty \mu(A_i)$.
    \end{itemize}
A \emph{probability measure} is any measure with total mass $\mu(X)$ equal to $1$.

Given a measureable function $f : (X_1, \Sigma_1) \to (X_2, \Sigma_2)$ and a measure
$\mu_1$ on $(X_1, \Sigma_1)$, the \emph{pushforward measure} 
$f_\# \mu_1$ on $(X_2, \Sigma_2)$ is 
$f_\# \mu_1 (A_2) := \mu_1 \big( f^{-1}(A_2) \big)$, for all $A_2 \in \Sigma_2$.

\begin{definition}[Product of measurable spaces]
    The \emph{product} of two measurable spaces $(X_1, \Sigma_1)$ and $(X_2, \Sigma_2)$ is $(X_1 \times X_2, \Sigma_1 \otimes \Sigma_2)$, where $\Sigma_1 \otimes \Sigma_2$ is the smallest $\sigma$-algebra containing the sets of the form $A_1 \times A_2$ for $A_1 \in \Sigma_1$ and $A_2 \in \Sigma_2$.
\end{definition}

Note that the two projections $\pi_1 : (X_1 \times X_2, \Sigma_1 \otimes \Sigma_2)
\to (X_1, \Sigma_1)$ and 
$\pi_2 : (X_1 \times X_2, \Sigma_1 \times \Sigma_2) \to (X_2, \Sigma_2)$
are measurable functions.

\begin{definition}[Product measure]
    Given a measure $\mu_1$ on $(X_1, \Sigma_1)$ and a measure $\mu_2$ on $(X_2, \Sigma_2)$,
the \emph{product measure} on
$(X_1 \times X_2, \Sigma_1 \otimes \Sigma_2)$
is $$(\mu_1 \otimes \mu_2) (A_1 \times A_2) := \mu_1(A_1) \cdot \mu_2(A_1),$$ for all $A_1 \times A_2 \in \Sigma_1 \times \Sigma_2$.
We have ${\pi_1}_{\#} (\mu_1 \otimes \mu_2) = \mu_1$ and
${\pi_2}_{\#} (\mu_1 \otimes \mu_2) = \mu_2$.
\end{definition}

\subsection{Metric Measure Spaces}
\label{subsec: mm-spaces}

The metric in a metric space $(X, d_X)$ induces a topology on $X$.
A \emph{metric (probability) measure space}, or \emph{mm-space} for short, is a triple $(X, d_X, \mu_X)$,
where $(X, d_X)$ is a metric space and $\mu_X$ is a probability measure
on its $\sigma$-algebra of Borel sets, denoted $B (d_X)$, with full support, i.e., $$\supp(\mu_X) := \overline{ \{ x \in X \mid \text{for every open $N_x\subseteq X$ } (x\in N_x \implies \mu_X(N_x)>0)\}  }  =  X$$ Throughout this paper, unless explicitly stated otherwise, we will assume that mm-spaces are finite.

\begin{definition}[Monge maps]
    A function $f: X \to Y$ from an mm-space $(X,d_X, \mu_X)$ to another mm-space $(Y,d_Y,\mu_Y)$ is called a \emph{Monge} map if $f_\# \mu_X = \mu_Y$. 
\end{definition}

\begin{definition}[Lipschitz mm-space function]
    A \emph{Lipschitz mm-space function} between two (not necessarily finite) mm-spaces $(X, d_X, \mu_X)$ and $(Y, d_Y, \mu_Y)$, written $f : (X, d_X, \mu_X) \to (Y, d_Y, \mu_Y)$, is a function $f : X \to Y$ satisfying the following conditions:
    \begin{itemize}
        \item $f : (X, d_X) \to (Y, d_Y)$ is $1$-Lipschitz
        \item $f_\# \mu_1 = \mu_2$ (i.e., $f$ is a Monge map).
    \end{itemize}
\end{definition}

Note that the $1$-Lipschitz condition implies that $f$ is continuous with respect to the topologies
induced by the two metrics making 
$f$ a measurable function between the two $\sigma$-algebras of Borel sets. In the case of finite mm-spaces, the topology induced on any finite mm-space will be the discrete topology. Therefore, any map between two finite mm-spaces will be continuous, and thus measurable, even if the $1$-Lipschitz condition is not satisfied. In~\cref{sec: dgw as edit}, we introduce another notion of morphism, called \emph{order-preserving mm-space function}, between finite mm-spaces where the $1$-Lipschitz condition is no longer required. However, we still require that these functions respect the metrics in an order-preserving sense.

\paragraph{Gromov-Wasserstein Distance.}
Let $(X,d_X, \mu_X)$ and $(Y,d_Y,\mu_Y)$ be two mm-spaces. A \emph{coupling} between $\mu_X$ and $\mu_Y$ is a probability measure $\mu$ on the product measurable space $X\times Y$ such that
\begin{itemize}
    \item ${\pi_X}_{\#} \mu = \mu_X$,
    \item ${\pi_Y}_{\#} \mu = \mu_Y$.
\end{itemize}

Let $\cpl(\mu_X, \mu_Y)$ denote the set of all couplings between $(X,\mu_X)$ and $(Y,\mu_Y)$

\begin{definition}[$p$-distortion]
    Let $(X,d_X,\mu_X)$ and $(Y,d_Y,\mu_Y)$ be two finite mm-spaces and let $\mu \in \cpl(\mu_X, \mu_Y)$ be a coupling. For $p\in [1,\infty)$, we define the \emph{$p$-distortion} of $\mu$ as
    \[
    \dis_p (\mu) := \left(\sum_{\substack{x,x' \in X \\ y,y' \in Y}} \bigl |d_X(x,x') - d_Y(y,y') \bigr |^p\mu(x,y)\mu(x',y')\right)^\frac{1}{p},
    \]
    and, for $p=\infty$, we define the \emph{$\infty$-distortion} of $\mu$ as
    \[
    \dis_\infty (\mu) = \max_{(x,y),(x',y') \in \supp(\mu)}|d_X(x,x') - d_Y(y,y')|.
    \]
\end{definition}

Any Monge map $f :(X,d_X,\mu_X) \to (Y,d_Y, \mu_Y)$ between two finite mm-spaces induces a coupling $\mu_f := \left(\mathrm{id}_X, f\right)_\# \mu_X \in \cpl(\mu_X, \mu_Y)$, where 
\begin{align*}
    (\mathrm{id}_X, f) : X &\to X\times Y \\
                        x &\mapsto (x,f(x)).
\end{align*}
This coupling $\mu_f$ has $p$-distortion equal to
\[
\dis_p (\mu_f) =  \left(\sum_{x,x'\in X} \bigl |d_X(x,x') - d_Y(f(x),f(x')) \bigr |^p\mu_X(x)\mu_X(x')\right)^\frac{1}{p} =: \dis_p(f),
\]
which we call the \emph{$p$-distortion of $f$} and denote by $\dis_p(f)$.
\begin{definition}[p-Gromov-Wasserstein distance~{\cite[Definition 5.7]{facundo-gw}}]
Let $(X,d_X,\mu_X)$ and $(Y,d_Y,\mu_Y)$ be two finite mm-spaces. For each $p \in [1,\infty]$, the \emph{p-Gromov-Wasserstein Distance} between the mm-spaces $X$ and $Y$ is defined as  
\[
\dgw{p}(X,Y) := \frac{1}{2}\inf_{\mu \in \cpl(\mu_X,\mu_Y)} \dis_p(\mu)
\]
\end{definition}

\section{\texorpdfstring{$\dgh$}{} and ~\texorpdfstring{$\dgw{p}$}{} as Edit Distances}
\label{sec:dgh is edit}

Edit distances quantify the dissimilarity between two objects by the minimal-cost sequence of edit operations needed to transform one into the other \cite{skiena2008algorithm}. For graphs \cite{sanfeliu1983distance}, such operations typically include the insertion, deletion, or relabeling of nodes and edges. This generalizes string edit distance to structured data and provides a flexible, intuitive framework for graph comparison. Variants of graph edit distance are widely used in pattern recognition, bioinformatics, and structural data analysis.

In this section, we define two categories: the category $\cmet$ of finite pseudo-metric spaces and the category $\cmm$ of finite metric measure (mm) spaces. For each category, we introduce a notion of cost for morphisms—standard cost in $\cmet$ and $p$-cost (for $p \in [1,\infty]$) in $\cmm$. These notions enable us to define an edit distance between finite pseudo-metric spaces and a $p$-edit distance between finite mm-spaces. Our main result is that the Gromov-Hausdorff distance arises as an instance of edit distance, and the $p$-Gromov-Wasserstein distance as an instance of $p$-edit distance.

\subsection{\texorpdfstring{$\dgh$}{} as an Edit Distance}

Let $X$ be a set and let $d_1$ and $d_2$ be two pseudo-metrics on $X$. We say that $d_2$ is a \emph{metric specialization} of $d_1$ if
\[
d_1(a,b) \geq d_1 (c,d) \implies d_2 (a,b) \geq d_2 (c,d)
\]
for every $a,b,c,d \in X$. In this case, we write $d_1\Longrightarrow d_2$.

\begin{definition}(Metric order-preserving functions)
    Let $(X,d_X)$ and $(Y,d_Y)$ be two pseudo-metric spaces. We say that a function $f: X \to Y$ is \emph{(metric) order-preserving} if 
\[
d_X(x_1, x_2) \geq d_X(x_3,x_4) \implies d_Y(f(x_1), f(x_2)) \geq d_Y(f(x_3), f(z_4)) 
\]
for every $x_1, x_2, x_3, x_4 \in X$.
\end{definition}

\begin{definition}[Pullback metric]
    Let $(X,d_X)$ be a (pseudo)-metric space. Let $Z$ be any set and $\phi : Z \rightarrow X$ be any function. The \emph{pullback pseudo-metric}, denoted $\phi^* d_X$, is defined as
    \begin{align*}
        \phi^*d_X := d_X \circ (\phi, \phi) : Z\times Z &\to \RR_+ \\
                                                (z_1, z_2) &\mapsto d_X (\phi(z_1), \phi(z_2)).
    \end{align*}
\end{definition}

Note that a function $f : Z \to X$ between two pseudo-metric spaces $(Z, d_Z)$ and $(X, d_X)$ is metric order-preserving if and only if $\phi^*d_X$ is a metric specialization of $d_Z$.

\paragraph{Category of Finite Pseudo-Metric Spaces.}

We now define the category of finite pseudo-metric spaces. Let $(X,d_X)$ and $(Y,d_Y)$ be two finite pseudo-metric spaces. We say that a function $f: X\to Y$ is a \emph{metric morphism} if $f$ is metric order-preserving and surjective. In this case, we write $(X,d_X) \xtwoheadrightarrow{f} (Y,d_Y)$. Observe that the composition of two metric morphisms is also a metric morphism. Let $\cmet$ denote the category whose objects are finite pseudo-metric spaces and whose morphisms are metric morphisms.

\begin{definition}[Cost of a metric morphism]
    Let $(X,d_X)$ and $(Y,d_Y)$ be two objects in $\cmet$ and assume that there is metric morphism $(X,d_X) \xtwoheadrightarrow{f} (Y,d_Y)$. We define the cost of this morphism as 
    \[
    \cost_\cmet (f) := \dis (f) = \max_{x_1, x_2 \in X} \left | d_X (x_1, x_2) - d_Y (f(x_1), f(x_2) ) \right |.
    \]
    
\end{definition} 

\begin{remark}\label{rem: morphisms in catmet and tripods}
    In our category of finite pseudo-metric spaces, morphisms are defined as specific surjections that echo the role of tripods in the realization of the Gromov–Hausdorff distance between bounded metric spaces given in \cref{thm: tripod version gh}. This formulation captures the essence of how tripods mediate between two spaces to realize the Gromov-Hausdorff distance between them.
\end{remark}

As every morphism in $\cmet$ has a nonnegative cost associated with it, we have the notion of edit distance, denoted $d_\cmet^E \left((X,d_X), (Y, d_Y)\right)$, between objects in $\cmet$, see~\cref{subsec: edit}. 

In the following proposition, we show that the edit distance between finite pseudo-metric spaces coincides with twice the Gromov-Hausdorff distance.

\begin{prop}\label{prop: gh as edit}
    For two finite psuedo-metric spaces $(X,d_X)$ and $(Y,d_Y)$ in $\cmet$, we have
    \[
    d_\cmet^E (X,Y) = 2 \dgh (X,Y).
    \]
\end{prop}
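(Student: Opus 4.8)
The plan is to invoke the tripod characterization of the Gromov--Hausdorff distance (\cref{thm: tripod version gh}), which turns the claim into the identity
\[
d_\cmet^E(X,Y) = \inf_{\mathfrak R}\ \dis(\mathfrak R),
\]
the infimum taken over all tripods $\mathfrak R : X \xtwoheadleftarrow{\phi_X} Z \xtwoheadrightarrow{\phi_Y} Y$. I would prove the two inequalities separately: the lower bound by collapsing a path into a single tripod, and the upper bound by turning a tripod into a zigzag of metric morphisms.

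For the bound $\inf_{\mathfrak R}\dis(\mathfrak R)\le d_\cmet^E(X,Y)$, I first note that every metric morphism, traversed in either direction along a path, is a tripod of equal distortion: a morphism $f:(A,d_A)\twoheadrightarrow (B,d_B)$ gives the tripod $A \xtwoheadleftarrow{\mathrm{id}} A \xtwoheadrightarrow{f} B$ of distortion $\dis(f)=\cost_\cmet(f)$ (and symmetrically if the arrow points the other way). The key step is that distortion is subadditive under composition: given tripods between $A_0,A_1$ and between $A_1,A_2$, their fiber product over $A_1$ is a tripod between $A_0,A_2$, and inserting the common $A_1$-distances gives a triangle-inequality estimate bounding its distortion by the sum of the two. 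Iterating along an arbitrary path $\mathcal P$ from $X$ to $Y$ produces one tripod $\mathfrak R$ with $\dis(\mathfrak R)\le \cost_\cmet(\mathcal P)$; taking infima yields $2\dgh(X,Y)\le d_\cmet^E(X,Y)$.

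For the reverse bound, fix a tripod $\mathfrak R$ and set $a:=\phi_X^*d_X$ and $b:=\phi_Y^*d_Y$, the two pullback pseudo-metrics on $Z$, so that $\dis(\mathfrak R)=\max_{z,z'}|a(z,z')-b(z,z')|$. Because $a=\phi_X^*d_X$, the surjection $\phi_X:(Z,a)\twoheadrightarrow(X,d_X)$ is metric order-preserving (a pseudo-metric specializes to itself, $a\Longrightarrow a$) and has zero cost, and likewise $\phi_Y:(Z,b)\twoheadrightarrow(Y,d_Y)$. It then suffices to connect $(Z,a)$ and $(Z,b)$ cheaply through a common coarsening, i.e.\ a pseudo-metric $c$ on $Z$ with $a\Longrightarrow c$ and $b\Longrightarrow c$, yielding the zigzag
\[
(X,d_X)\xtwoheadleftarrow{\phi_X}(Z,a)\xtwoheadrightarrow{\ \mathrm{id}\ }(Z,c)\xtwoheadleftarrow{\ \mathrm{id}\ }(Z,b)\xtwoheadrightarrow{\phi_Y}(Y,d_Y),
\]
of total cost $\max_{z,z'}|a-c| + \max_{z,z'}|b-c|$ (the two outer maps costing $0$). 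Minimizing over $c$ and over $\mathfrak R$ would give $d_\cmet^E(X,Y)\le \inf_{\mathfrak R}\dis(\mathfrak R)=2\dgh(X,Y)$.

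The main obstacle is therefore the existence of a sufficiently good coarsening: one must produce a common coarsening $c$ with $\max|a-c|+\max|b-c|\le \dis(\mathfrak R)$. The reverse inequality is automatic, since at a pair $(z,z')$ realizing $\dis(\mathfrak R)$ the triangle inequality forces $\max|a-c|+\max|b-c|\ge |a(z,z')-c(z,z')|+|c(z,z')-b(z,z')|\ge \dis(\mathfrak R)$, so the construction has to be exactly tight. The two delicate points are: (i) the constraints $a\Longrightarrow c$ and $b\Longrightarrow c$ force $c$ to be constant on the classes generated by the level sets of $a$ and $b$ together with their order-conflicts, and one must assign values to these classes monotonically while staying within the $\dis(\mathfrak R)$-band around both $a$ and $b$ (an isotonic/Hall-type selection argument, where the split between the two sup-norm terms must in general be chosen asymmetrically); and (ii) the resulting $c$ must still satisfy the triangle inequality, i.e.\ be a genuine pseudo-metric. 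I expect point (i), the simultaneous order-and-band feasibility, to be the crux of the whole argument.
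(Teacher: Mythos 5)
Your lower bound ($2\dgh \le d_\cmet^E$) is fine and is in substance what the paper does: each metric morphism is converted into a tripod of the same distortion, after which the paper simply invokes the triangle inequality for $\dgh$ rather than composing tripods via fiber products by hand.

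The upper bound, however, contains a genuine gap, and the single-coarsening plan that you yourself flag as the crux cannot be repaired: a common coarsening $c$ with $\max|a-c|+\max|b-c|\le\dis(\mathfrak{R})$ need not exist. Take $Z$ containing three disjoint pairs of points $p_0,p_1,p_2$, with all cross-pair distances equal to $10$ in both pseudo-metrics, and with $a(p_0)=0$, $a(p_1)=1$, $a(p_2)=2$, $b(p_0)=1$, $b(p_1)=0$, $b(p_2)=1$, so that $\dis(\mathfrak{R})=1$. The tie $b(p_0)=b(p_2)$ forces $c(p_0)=c(p_2)=:v$ for any $c$ with $b\Longrightarrow c$; then $\max|a-c|\le 1$ forces $v=1$ (since $|0-v|\le 1$ and $|2-v|\le 1$), hence $\max|a-c|=1$ exactly, so the budget requires $\max|b-c|=0$, i.e.\ $c=b$ --- but $a\Longrightarrow b$ fails because $a(p_1)\ge a(p_0)$ while $b(p_1)<b(p_0)$. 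So every admissible two-step zigzag through one intermediate space costs strictly more than $\dis(\mathfrak{R})$, and your ``exactly tight'' isotonic selection is infeasible, not merely delicate. The paper circumvents this by allowing arbitrarily many intermediate spaces: it interpolates $d_t=(1-t)a+tb$ (each $d_t$ is automatically a pseudo-metric, so your worry (ii) never arises), subdivides $[0,1]$ at the finitely many parameters where two pairwise distances cross so that the identity maps out of generic interior points $c_i$ into the neighboring $t_i$, $t_{i+1}$ are metric order-preserving, and obtains a zigzag whose costs telescope to $\max|a-b|=\dis(\mathfrak{R})$. The length of the path must grow with the number of order conflicts between $a$ and $b$; no bounded-length zigzag suffices in general.
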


\begin{proof}
    Let 
    \begin{equation*}
    \begin{tikzcd}
	(X,d_X) \ar[r, leftrightarrow, "f_1"] & (Z_1,d_{Z_1}) \ar[r, leftrightarrow, "f_2"] & \cdots 
	\ar[r, leftrightarrow, "f_{n-1}"] & (Z_{n-1},d_{Z_{n-1}}) \ar[r, leftrightarrow, "f_n"] & (Y, d_Y)
    \end{tikzcd}        
    \end{equation*}
    be a path between $(X, d_X)$ and $(Y, d_Y)$ realizing $d_\cmet^{E} (X,Y)$. For any $i=1,\ldots,n$, the metric morphism $f_i$ is a surjection. Without loss of generality, for a fixed $i$, assume that $f_i : Z_i \twoheadrightarrow Z_{i+1}$. Then, $f_i$ it induces the following tripod
    \[
    f_i: Z_i \xtwoheadleftarrow{\mathrm{id}_{Z_i}} Z_i \xtwoheadrightarrow{f_i} Z_{i+1}
    \]    
    whose distortion is $\cost_\cmet (f_i)$. Hence, we have that $2 \dgh (Z_{i-1}, Z_{i}) \leq \cost_\cmet (f_i)$ for $i=1,...,n$, where $Z_0 = X$ and $Z_{n} = Y$. Thus,
    \begin{align*}
        2 \dgh (X,Y) &\leq \sum_{i=1}^n 2 \dgh (Z_{i-1}, Z_{i}) \\
        &\leq \sum_{i=1}^n \cost_\cmet (f_i) \\
        &= d_\cmet^{E} (X,Y).
    \end{align*}
    To see the other inequality, namely, $d_\cmet^{E} (X,Y)\leq 2\dgh(X,Y)$, let 
    \[
    \mathfrak{R}: X \xtwoheadleftarrow{\pi_X} Z \xtwoheadrightarrow{\pi_Y} Y
    \]
    be an optimal tripod realizing $\dgh (X,Y)$. That is, $2\dgh (X,Y) = \dis (\mathfrak{R})$. Let $d_1 := \pi_X^* d_X$ and $d_2 := \pi_Y^* d_Y$. Then, consider the family
    \[
    Z_t := (Z, d_t:=(1-t)d_1 + t d_2)
    \]
    for $t\in [0,1]$. Observe that, by construction, we have that $\dgh (Z_t, Z_s) \leq |t-s| \dgh (X,Y)$ for all $t,s\in[0,1]$.\footnote{This condition implies that the family $\{ Z_t\}_{t\in[0,1]}$ is a geodesic between $Z_0=X$ and $Z_1=Y$, see~\cite[Theorem 1.2]{Chowdhury2018}.} Let $z_1, z_2, z_3, z_4 \in Z$ be such that either $d_1(z_1, z_2) \neq d_1(z_3, z_4)$ or $d_2(z_1, z_2) \neq d_2(z_3, z_4)$. Then there exists at most one value $t \in [0,1]$ such that $d_t(z_1, z_2) = d_t(z_3, z_4)$. Let $0\leq t_1 < t_2 < ... <t_k \leq 1$ be the list of such $t_i$s arising from quadruples of points of $Z$. Let $c_0 < c_1 < ... < c_k$ be such that $t_i < c_i < t_{i+1}$ for $i=0,...,k$ where $t_0 = 0$ and $t_{k+1} = 1$. Then, the functions
    \begin{align*}
        g_{c_i , t_i} := \mathrm{id}_Z &: Z_{c_i} \to Z_{t_i} \\
        g_{c_i , t_{i+1}} := \mathrm{id}_Z &: Z_{c_i} \to Z_{t_{i+1}} 
    \end{align*}
       determines metric morphisms $(Z_{c_i}, d_{c_i}) \xtwoheadrightarrow{g_{c_i \to t_i}} (Z_{t_i}, d_{t_i})$ and $(Z_{c_i}, d_{c_i}) \xtwoheadrightarrow{g_{c_i \to t_{i+1}}} (Z_{t_{i+1}}, d_{t_{i+1}})$, respectively, with
    \begin{align*}
        \cost_\cmet (g_{c_i, t_i}) &= \dis (g_{c_i , t_i}) = 2 (c_i-t_i) \dgh(Z_{0}, Z_{1}) \\
        \cost_\cmet (g_{c_i,t_{i+1}}) &= \dis (g_{c_i , t_{i+1}}) = 2 (t_{i+1}-c_i) \dgh(Z_{0}, Z_{1})
    \end{align*}
    Thus,
    \begin{align*}
        d_\cmet^{E}(X,Y) &\leq \sum_{i=1}^k \cost_\cmet (g_{c_i , t_i}) + \cost_\cmet (g_{c_i , t_{i+1}}) \\
        &= \sum_{i=0}^k (t_{i+1} - t_i) 2 \dgh (X,Y) \\
        &= 2 \dgh (X,Y)
    \end{align*}
\end{proof}

\subsection{\texorpdfstring{$\dgw{p}$}{} as an Edit Distance}\label{sec: dgw as edit}

In this section, we define the category of finite mm-spaces and show that the $p$-Gromov-Wasserstein distance can be obtained as a $p$-edit distance. We begin by introducing the notion of \emph{order-preserving mm-space functions}, which will serve as morphisms in the category of finite mm-spaces.

\begin{definition}[Order-preserving mm-space function]\label{defn: order-preserving mm-space function}
    An \emph{order-preserving mm-space function} between two finite mm-spaces $(X, d_X, \mu_X)$ and $(Y, d_Y, \mu_Y)$, written $f : (X, d_X, \mu_X) \to (Y, d_Y, \mu_Y)$,
is a function $f : X \to Y$ satisfying the following conditions:
    \begin{enumerate}
        \item $f$ is metric order-preserving. That is, $$d_X(x_1, x_2) \leq d_X(x_3, x_4) \implies d_Y(f(x_1), f(x_2)) \leq d_Y (f(x_3), f(x_4))$$ for all $x_1, x_2, x_3, x_4 \in X$.\label{condition equiv to metric morphism}
        \item $f_\# \mu_1 = \mu_2$ (i.e., $f$ is a Monge map)\label{monge condition}.
    \end{enumerate}
\end{definition}

Observe that the composition of two order-preserving mm-space functions is also an order-preserving mm-space function. Note that, in the definition of order-preserving mm-space functions above, \cref{monge condition} guarantees that the function $f : X \to Y$ has to be surjective. Combined with~\cref{condition equiv to metric morphism}, this surjectivity implies that $(X,d_X) \xtwoheadrightarrow{f} (Y,d_Y)$ is a metric morphism.

Let $\cmm$ denote the category whose objects are finite mm-spaces and whose morphisms are order-preserving mm-space functions.

\begin{definition}[p-cost of an order-preserving mm-space function]
    Let $(X,d_X,\mu_X)$ and $(Y, d_Y, \mu_Y)$ be two finite mm-spaces and let $f : X\to Y$ be an order-preserving mm-space function. For $p \in [1,\infty)$, we define the p-cost of $f$ as
    \[
    \cost_\cmm^{p} (f) := \dis_p(f) = \left ( \sum_{x,x' \in X} \left| d_X(x,x') - d_Y (f(x), f(x')) \right|^p \mu_X (x) \mu_X (x') \right ) ^\frac{1}{p}.
    \]
    For $p=\infty$, we define the $\infty$-cost of $f$ as 
    \[
    \cost_\cmm^{\infty} (f) := \dis_\infty (f) =  \max_{x,x' \in X} \left| d_X(x, x') - d_Y (f(x), f(x')) \right|.
    \]
\end{definition}

For a fixed $p \in [1,\infty]$, we now have the notion of \emph{$p$-edit distance} as follows. Every morphism in $\cmm$ has a nonnegative $p$-cost associated with it. As discussed in \cref{subsec: edit}, we obtain an edit distance between objects of $\cmm$. We refer to this edit distance as the $p$-edit distance and denote it by $d_\cmm^{E,p}$.

In the following proposition, we show that the $p$-edit distance between finite mm-spaces coincides with twice the $p-$Gromov-Wasserstein distance.

\begin{prop}\label{prop: p-gw as edit}
    For every $p\in [1,\infty]$ and two finite mm-spaces spaces $(X,d_X,\mu_X)$ and $(Y,d_Y,\mu_Y)$ in $\cmm$ we have
    \[
    d_\cmm^{E,p} (X,Y) = 2 \dgw{p} (X,Y).
    \]    
\end{prop}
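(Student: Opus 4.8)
The plan is to establish the two inequalities $2\dgw{p}(X,Y) \le d_\cmm^{E,p}(X,Y)$ and $d_\cmm^{E,p}(X,Y) \le 2\dgw{p}(X,Y)$ separately, mirroring the structure of the proof of \cref{prop: gh as edit}, with couplings and $\dis_p$ now playing the roles that tripods and $\dis$ played there. Throughout I would use the fact, recorded in \cref{subsec: mm-spaces}, that a Monge map $f$ induces a coupling $\mu_f$ with $\dis_p(\mu_f) = \dis_p(f)$.

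For $2\dgw{p} \le d_\cmm^{E,p}$, I would start from an arbitrary path $X = W_0 \leftrightarrow W_1 \leftrightarrow \cdots \leftrightarrow W_n = Y$ in $\cmm$ with morphisms $f_i$. Each $f_i$ is an order-preserving mm-space function, hence in particular a Monge map, so it induces a coupling between the measures of $W_{i-1}$ and $W_i$ with $p$-distortion equal to $\cost_\cmm^p(f_i)$; the direction of $f_i$ is immaterial since $\dgw{p}$ is symmetric. As $\dgw{p}$ is the half-infimum of $\dis_p$ over all couplings, this gives $2\dgw{p}(W_{i-1},W_i) \le \cost_\cmm^p(f_i)$. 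Invoking the triangle inequality for $\dgw{p}$ (a metric on isomorphism classes of mm-spaces, see~\cite{facundo-gw}) and summing yields $2\dgw{p}(X,Y) \le \sum_i \cost_\cmm^p(f_i) = \cost_\cmm^p(\mathcal{P})$; taking the infimum over paths gives the inequality.

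For the reverse inequality I would fix an optimal coupling $\mu \in \cpl(\mu_X,\mu_Y)$ (which exists because the coupling polytope is compact and $\dis_p$ is continuous), so $\dis_p(\mu) = 2\dgw{p}(X,Y)$. Set $Z := \supp(\mu) \subseteq X\times Y$ with measure $\mu$, and, exactly as in \cref{prop: gh as edit}, define the pullback pseudo-metrics $d_1 := \pi_X^* d_X$, $d_2 := \pi_Y^* d_Y$ and the interpolation $d_t := (1-t)d_1 + t d_2$. Since each $t \mapsto d_t(z,z')$ is affine, only finitely many parameters $0 < t_1 < \cdots < t_k < 1$ can equalize two previously distinct pairwise distances; choosing interior points $c_i$ with $t_i < c_i < t_{i+1}$ (with $t_0=0$, $t_{k+1}=1$), the identity maps $g_{c_i,t_i}\colon (Z,d_{c_i},\mu)\to(Z,d_{t_i},\mu)$ and $g_{c_i,t_{i+1}}\colon(Z,d_{c_i},\mu)\to(Z,d_{t_{i+1}},\mu)$ are order-preserving (the order is constant between breakpoints and only collapses toward a breakpoint) and Monge (the identity preserves $\mu$), hence morphisms in $\cmm$. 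The crucial computation is that, because the sum defining $\dis_p$ weights the pair $\bigl((x,y),(x',y')\bigr)$ by exactly $\mu(z)\mu(z')$, one obtains $\cost_\cmm^p(g_{c_i,t_i}) = |c_i - t_i|\cdot \dis_p(\mu) = |c_i - t_i|\cdot 2\dgw{p}(X,Y)$ for every $p\in[1,\infty]$ (with the $\max$ over $\supp(\mu)$ serving this role when $p=\infty$), so the telescoping interval lengths sum to a total cost of $2\dgw{p}(X,Y)$.

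The main obstacle is a genuine-metric/full-support subtlety absent from \cref{prop: gh as edit}: the category $\cmm$ requires honest metrics, whereas $d_1$ and $d_2$ are only pseudo-metrics (distinct points of $Z$ can be at zero $d_1$- or $d_2$-distance), so the endpoints $Z_0$ and $Z_1$ are not objects of $\cmm$. I would resolve this by never using $t=0$ or $t=1$: for $t\in(0,1)$ one has $d_t(z,z') = (1-t)d_X(x,x') + t\,d_Y(y,y') > 0$ whenever $z\ne z'$ (as $X,Y$ are genuine metric spaces), so every interior $Z_{c_i},Z_{t_i}$ is a bona fide finite mm-space, and I would attach the endpoints directly through the projections $\pi_X\colon(Z,d_{c_0},\mu)\to X$ and $\pi_Y\colon(Z,d_{c_k},\mu)\to Y$. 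These projections are Monge and order-preserving (the strict order of $d_{c_0}$ extends that of $d_1 = \pi_X^*d_X$), with $p$-costs $c_0\cdot 2\dgw{p}$ and $(1-c_k)\cdot 2\dgw{p}$; together with the interior zigzag they account for the segments partitioning $[0,1]$ and again sum to exactly $2\dgw{p}(X,Y)$. Verifying order-preservation of these boundary projections and of the identity maps at the breakpoints, uniformly in $p$ (including $p=\infty$), is the part demanding the most care.
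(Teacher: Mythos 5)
Your proposal is correct and follows essentially the same route as the paper's proof: the lower bound via the couplings $\mu_{f_i}$ induced by the Monge maps along a path together with the triangle inequality for $\dgw{p}$, and the upper bound via the linear interpolation $d_t = (1-t)\pi_X^*d_X + t\,\pi_Y^*d_Y$ on $\supp(\mu)$ for an optimal coupling $\mu$, subdivided at the finitely many parameters where order relations among pairwise distances change. Your two refinements—computing $\cost_\cmm^p(g_{c_i,t_i}) = |c_i-t_i|\cdot\dis_p(\mu)$ directly rather than citing the geodesic property, and splicing in the zero-extra-cost projections $\pi_X,\pi_Y$ to avoid the endpoint spaces $Z_0,Z_1$, which carry only pseudo-metrics—are sound and in fact tighten a detail the paper's proof glosses over.
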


\begin{proof}
    Let 
    \begin{equation*}
    \begin{tikzcd}
	(X,d_X, \mu_X) \ar[r, leftrightarrow, "f_1"] & (Z_1,d_{Z_1}, \mu_{Z_1}) \ar[r, leftrightarrow, "f_2"] & \cdots 
	\ar[r, leftrightarrow, "f_{n-1}"] & (Z_{n-1},d_{Z_{n-1}}, \mu_{Z_{n-1}}) \ar[r, leftrightarrow, "f_n"] & (Y, d_Y, \mu_Y)
    \end{tikzcd}        
    \end{equation*}
    be a path between $(X, d_X, \mu_X)$ and $(Y, d_Y, \mu_Y)$ realizing $d_\cmm^{E,p} (X,Y)$. Since each order-preserving mm-space function $f_i$ is a Monge map, each $f_i$ induces a coupling $\mu_{f_i}$, as discussed in~\cref{subsec: mm-spaces}, whose p-distortion is $\dis_p(\mu_{f_i}) = \dis_p (f_i) = \cost_\cmm^p (f_i)$. Thus, we obtain that $2 \dgw{p} (Z_{i-1}, Z_{i}) \leq \cost_\cmm^p (f_i)$ for $i=1,...,n$, where $Z_0 = X$ and $Z_{n} = Y$. Thus,
    \begin{align*}
        2 \dgw{p} (X,Y) &\leq \sum_{i=1}^n 2 \dgw{p} (Z_{i-1}, Z_{i}) \\
        &\leq \sum_{i=1}^n \cost_\cmm^p (f_i) \\
        &= d_\cmm^{E,p} (X,Y).
    \end{align*}
    To see the other inequality, namely, $d_\cmm^{E,p} (X,Y)\leq 2\dgw{p}(X,Y)$, let $\mu$ be an optimal coupling between $\mu_X$ and $\mu_Y$. That is, $2\dgw{p} (X,Y) = \dis_p(\mu)$. Let $S = \supp(\mu) \subseteq X\times Y$ be the support of the measure $\mu$. Then, the family
    \[
    Z_t := (S, d_t:=(1-t)d_X + t d_Y, \mu)
    \]
    for $t\in [0,1]$ is a geodesic connecting $X$ and $Y$, i.e., $\dgw{p} (Z_t, Z_s) \leq |s-t|\dgw{p} (X,Y)$ for all $s,t\in[0,1]$, see~\cite[Theorem 3.1]{sturm-geodesic}. Let $(x_i,y_i) \in S$ for $i=1,2,3,4$ be such that either $d_X(x_1, x_2) \neq d_X(x_3, x_4)$ or $d_Y(y_1, y_2) \neq d_Y(y_3, y_4)$. Then there exists at most one value $t \in [0,1]$ such that $d_t((x_1,y_1), (x_2, y_2)) = d_t((x_3,y_3), (x_4, y_4))$. Let $0\leq t_1 < t_2 < ... <t_k \leq 1$ be the list of such $t_i$s arising from quadruples of points of $S$. Let $c_0 < c_1 < ... < c_k$ be such that $t_i < c_i < t_{i+1}$ for $i=0,...,k$ where $t_0 = 0$ and $t_{k+1} = 1$. Then, the maps $f_{c_i \to t_i} : Z_{c_i} \to Z_{t_i}$ and $f_{c_i \to t_{i+1}} : Z_{c_i} \to Z_{t_{i+1}}$, given by the identity map on $S$, are order-preserving mm-space functions with $\cost_\cmm^p (c_i\to t_i) = \dis_p (f_{c_i \to t_i}) = 2 (c_i-t_i) \dgw{p}(Z_{0}, Z_{1
    })$ as the family of $Z_t$s is a geodesic. Thus,
    \begin{align*}
        d_\cmm^{E,p}(X,Y) &\leq \sum_{i=1}^k \cost_\cmm^p(f_{c_i \to t_i}) + \cost_\cmm^p(f_{c_i \to t_{i+1}}) \\
        &= \sum_{i=0}^k (t_{i+1} - t_i) 2 \dgw{p} (X,Y) \\
        &= 2 \dgw{p} (X,Y)
    \end{align*}
\end{proof}

\section{A Functorial Construction of (Weighted) Vietoris-Rips Filtration}
\label{sec:vr_functor}

In this section, we introduce the category of filtrations, $\Fil$, and the category of weighted filtrations, $\wfil$. We show that the Vietoris-Rips filtration is a functor from $\cmet$ to $\Fil$. Moreover, we introduce the notion of weighted Vietoris-Rips filtration and we show that it is a functor from $\cmm$ to $\wfil$.

\subsection{Unweighted Functorial Vietoris-Rips}

\paragraph{Category of Unweighted Filtrations.}
Let $V$ be a finite set and let $K_V$ be a simplicial complex whose vertex set is $V$. Let $\Ffunc : Q \to \subcx(K_V)$ be a $1$-parameter filtration of the finite simplicial complex $K_V$. That is, $\Ffunc$ is an order-preserving map $\Ffunc : Q \to \subcx(K_V)$ where $Q\subseteq [0,\infty)$ is finite, $0\in Q$ and $\Ffunc (\top_Q) = K_V$. Let $Z$ be any finite set and $\pi : Z \twoheadrightarrow V$ be a surjection. For any $q\in Q$, let $\Ffunc_Z^{\pi} \subseteq 2^Z \setminus \emptyset $ denote the smallest simplicial complex containing $\left\{ \pi^{-1} (\sigma) \mid \sigma \in \Ffunc(q) \right\}$. Let $K_Z$ denote the simplicial complex $\Ffunc(\top_Q)$ where $\top_Q$ is the maximum element of $Q$. Then, 
\[
\Ffunc_Z^{\pi} : Q \to \subcx  (K_Z)
\]
is a $1$-parameter filtration of the finite simplicial complex $K_Z$, called the \emph{pullback filtration} of $\Ffunc$ along $\pi$.

\begin{definition}[Filtration-preserving morphisms]\label{defn: filtration-presv morphism}
    Let $V$ and $W$ be two finite sets and $K_V$ and $K_W$ be two simplicial complexes over the vertex sets $V$ and $W$ respectively. Let $\Ffunc : Q  \to \subcx (K_V)$ and $\Gfunc : R \to \subcx(K_W)$ be two $1$-parameter filtrations. A \emph{filtration-preserving morphism} from $\Ffunc$ to $\Gfunc$ is a tuple $(f,g, Z, \pi_V, \pi_W)$ such that
    \begin{itemize}
        \item $Z$ is a finite set and $V \xtwoheadleftarrow{\pi_V} Z \xtwoheadrightarrow{\pi_W} W$ is a tripod,
        \item $f: Q \leftrightarrows R : g$ is a Galois connection,
        \item $\Gfunc_Z^{\pi_W} (r) = \Ffunc_Z^{\pi_V} (g(r))$ for all $r\in R$.
    \end{itemize}
\end{definition}
Let $\Fil$ denote the category whose objects are $1$-parameter filtrations of finite simplicial complexes and whose morphisms are filtration-preserving morphisms.

\begin{remark}
    Organizing the collection of $1$-parameter filtrations into the category $\Fil$ through the filtration-preserving morphisms serves as a middle step in constructing the functorial pipeline that goes from finite (pseudo)-metric spaces to persistence diagrams. Note that functoriality is the main tool that we utilize in order to prove stability with respect to the edit distance. In~\cref{prop: unweighted vr is functor}, we show that assigning the Vietoris-Rips filtration to a finite (pseudo)-metric space is a functor from $\cmet $ to $ \Fil$. Later, in~\cref{prop: functoriality of pd}, we see that assigning the degree-$d$ persistence diagram to a filtration is a functor from $\Fil$ to $\Dgm$, the category of persistence diagrams. Moreover, all these functors respect the cost of morphisms. Thus, we obtain the edit distance stability.
\end{remark}

\begin{definition}[Cost of a filtration-preserving morphism]
    Let $\Ffunc : Q  \to \subcx (K_V)$ and $\Gfunc : R \to \subcx(K_W)$ be two $1$-parameter filtrations in $\Fil$ and let $(f,g, Z, \pi_V, \pi_W)$ be a filtration-preserving morphism from $\Ffunc$ to $\Gfunc$. We define the cost of this morphism as
    \[
    \cost_\Fil ((f,g, Z, \pi_V, \pi_W)) := \dis (f) = \max_{q_1, q_2 \in Q} \bigl| |q_1 - q_2| - |f(q_1) - f(q_2)| \bigr|.
    \]
\end{definition}

As every morphism in $\Fil$ has a nonnegative cost associated with it, we have the notion of edit distance, denoted $d_\Fil^E \left(\Ffunc, \Gfunc\right)$, between objects in $\Fil$, see~\cref{subsec: edit}.

\paragraph{Unweighted Vietoris-Rips.}
Let $(X,d_X)$ be a finite (pseudo)-metric space. The Vietoris-Rips complex of $X$ at scale $r \in [0,\infty)$, denoted $\VR((X, d_X), r)$, is defined to be the abstract simplicial complex constructed by forming a simplex for every subset of $X$ that has diameter at most $r$. As $r$ varies, this construction determines a filtration 
called the Vietoris-Rips filtration of $X$. 

Let
\begin{align*}
    D(X,d_X) &:= \{ r\in [0,\infty) \mid \exists x_1, x_2 \text{ such that } r = d_X (x_1, x_2)\} \\
                & = \ima (d_X : X\times X \to \RR_+).
\end{align*}

\begin{definition}\label{def:VR-filt}[Vietoris-Rips filtration]
    Let $(X,d_X)$ be a finite (pseudo)-metric space. The \emph{Vietoris-Rips filtration} of $X$ is defined to be the $1$-parameter filtration given by
    \begin{align*}
            \VR(X,d_X) : D(X, d_X) &\to \subcx (2^X \setminus \emptyset) \\
                        r &\mapsto \VR((X,d_X),r)
    \end{align*}
\end{definition}

\begin{prop}\label{prop: unweighted vr is functor}
    The assignment 
    \[
        (X,d_X) \to \left( \VR(X,d_X) : D(X,d_X) \to \subcx(2^X \setminus \emptyset) \right)
    \]
    is a functor from $\cmet$ to $\Fil$. Moreover, for any $(X,d_X)$ and $(Y,d_Y)$ in $\cmet$, we have that
    \[
    d_\Fil^E (\VR(X,d_X), \VR(Y,d_Y)) \leq 2 \cdot d_\cmet^E ((X,d_X),(Y,d_Y)) = 4 \cdot \dgh((X,d_X),(Y,d_Y)).
    \]
\end{prop}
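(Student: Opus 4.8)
The plan is to exhibit $\VR$ as a functor $\cmet \to \Fil$ that scales morphism costs by at most $M = 2$, and then to invoke the general stability principle of~\cref{prop: general edit distance stability} together with the identity $d_\cmet^E = 2\dgh$ from~\cref{prop: gh as edit}. Thus the real content is fourfold: (i) defining $\VR$ on a metric morphism, (ii) checking the three conditions of~\cref{defn: filtration-presv morphism}, (iii) verifying functoriality, and (iv) bounding $\cost_\Fil(\VR(f)) \le 2\,\cost_\cmet(f)$.

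Given a metric morphism $\phi \colon (X,d_X) \twoheadrightarrow (Y,d_Y)$, I would set $\VR(\phi) := (f, g, X, \mathrm{id}_X, \phi)$, where the tripod is $X \xtwoheadleftarrow{\mathrm{id}_X} X \xtwoheadrightarrow{\phi} Y$. The Galois connection is built from the map $f \colon D(X,d_X) \to D(Y,d_Y)$ sending a distance value $q = d_X(x,x')$ to $d_Y(\phi(x),\phi(x'))$. The metric order-preserving property of $\phi$ makes $f$ well defined (pairs at equal $d_X$-distance land at equal $d_Y$-distance) and monotone, while surjectivity of $\phi$ makes $f$ surject onto $D(Y,d_Y)$. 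Since $D(X,d_X)$ is a finite chain and $f(0) = 0 \le r$ for all $r$, the right adjoint $g(r) := \max\{q \mid f(q) \le r\}$ exists, so $(f,g)$ is a genuine Galois connection.

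The crux is the compatibility condition $\Gfunc_X^\phi(r) = \VR((X,d_X), g(r))$, where $\Gfunc = \VR(Y,d_Y)$. Here I would unwind the pullback filtration: a nonempty $S \subseteq X$ is a simplex of $\Gfunc_X^\phi(r)$ iff $S \subseteq \phi^{-1}(\tau)$ for some simplex $\tau$ of diameter $\le r$, equivalently iff $\mathrm{diam}_Y(\phi(S)) \le r$ (take $\tau = \phi(S)$). Because $f$ is monotone it commutes with the finite maximum, so $\mathrm{diam}_Y(\phi(S)) = \max_{x,x'\in S} d_Y(\phi(x),\phi(x')) = f(\mathrm{diam}_X(S))$, and $\mathrm{diam}_X(S) \in D(X,d_X)$; hence the adjunction $f(q) \le r \iff q \le g(r)$ converts the condition into $\mathrm{diam}_X(S) \le g(r)$ — exactly the requirement for $S$ to be a simplex of $\VR((X,d_X), g(r)) = \Ffunc_X^{\mathrm{id}_X}(g(r))$. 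This equality of simplex sets is the main obstacle, and it is precisely where every hypothesis (order-preservation, surjectivity, finiteness) is consumed. Functoriality is then routine: $\VR(\mathrm{id}_X)$ is the identity tuple, and for a composite $\phi_2 \circ \phi_1$ the tripods compose to $X \xtwoheadleftarrow{\mathrm{id}_X} X \xtwoheadrightarrow{\phi_2\phi_1} W$ while the associated monotone maps on distance values compose, giving $\VR(\phi_2\circ\phi_1) = \VR(\phi_2)\circ\VR(\phi_1)$.

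Finally, for the cost bound, for each $q = d_X(x,x')$ we have $|q - f(q)| = |d_X(x,x') - d_Y(\phi(x),\phi(x'))| \le \dis(\phi) = \cost_\cmet(\phi)$. Since $f$ is monotone, for any $q_1 \ge q_2$ both $|q_1 - q_2|$ and $|f(q_1)-f(q_2)|$ shed their absolute values, so
\[
\bigl| |q_1 - q_2| - |f(q_1)-f(q_2)| \bigr| = \bigl| (q_1 - f(q_1)) - (q_2 - f(q_2)) \bigr| \le 2\,\cost_\cmet(\phi),
\]
whence $\cost_\Fil(\VR(\phi)) = \dis(f) \le 2\,\cost_\cmet(\phi)$. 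Taking $M = 2$ in~\cref{prop: general edit distance stability} yields $d_\Fil^E(\VR(X),\VR(Y)) \le 2\, d_\cmet^E(X,Y)$, and substituting $d_\cmet^E = 2\dgh$ from~\cref{prop: gh as edit} gives the stated factor $4\dgh$.
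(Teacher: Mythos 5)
Your proposal is correct and follows essentially the same route as the paper: the same tripod $X \xtwoheadleftarrow{\mathrm{id}_X} X \xtwoheadrightarrow{\phi} Y$, the same Galois connection on distance sets, the same $2\,\dis(\phi)$ cost estimate, and the same appeal to \cref{prop: general edit distance stability} and \cref{prop: gh as edit}. The only (minor, and arguably cleaner) deviation is in verifying the compatibility condition: the paper factors this through \cref{lem: technical lemma for functorial VR} and compares edge sets of the two complexes, whereas you compute directly with diameters, using monotonicity of $f$ to commute it past the maximum and the adjunction $f(q)\leq r \iff q\leq g(r)$ to conclude on all simplices at once.
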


To prove~\cref{prop: unweighted vr is functor}, we will need the following lemma.

\begin{lemma}\label{lem: technical lemma for functorial VR}
    Assume that $Z$ is a finite set and $d_1$ and $d_2$ are two (pseudo) metrics on $Z$ such that $d_2$ is a metric specialization of $d_1$. Let $f : D(Z,d_1) \to D(Z,d_2)$ be given by $f(d_1(z_1, z_2)) = d_2(z_1, z_2)$. Then,
    \begin{enumerate}
        \item $f$ is a well-defined order-preserving map and admits a right adjoint $g : D(Z,d_2) \to D(Z, d_1)$,
        \item For every $r\in D(Z,d_2)$, if $d_1 (z,z') = g(r)$, then $d_2(z,z') = r$, \label{technical condition required for filtraton-preserving}
        \item $\VR((Z,d_2), r) = \VR((Z,d_1), g(r))$ for every $r \in D(Z,d_2)$. \label{classical filtration-preserving condition}
    \end{enumerate}
\end{lemma}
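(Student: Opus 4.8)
The plan is to derive all three statements from the metric-specialization hypothesis together with the elementary theory of Galois connections between finite totally ordered sets, observing at the outset that $D(Z,d_1)$ and $D(Z,d_2)$ are finite subsets of $\RR_+$ and hence totally ordered.

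For part (1), I would first check that $f$ is well-defined: if $d_1(z_1,z_2)=d_1(z_3,z_4)$, then applying the implication $d_1\Longrightarrow d_2$ to the two inequalities $d_1(z_1,z_2)\ge d_1(z_3,z_4)$ and $d_1(z_3,z_4)\ge d_1(z_1,z_2)$ forces $d_2(z_1,z_2)=d_2(z_3,z_4)$, so the assigned value $f(d_1(z_1,z_2)):=d_2(z_1,z_2)$ does not depend on the chosen representative pair. The same implication, applied once, shows $f$ is order-preserving. To produce the right adjoint I would use the formula for the right adjoint recalled in \cref{sec:lattices}, setting
\[
g(r):=\max\{a\in D(Z,d_1)\mid f(a)\le r\}.
\]
This maximum exists because $D(Z,d_1)$ is finite and the set is nonempty (it contains $0=d_1(z,z)$, and $f(0)=d_2(z,z)=0\le r$); verifying the adjunction $f(a)\le r \iff a\le g(r)$ is then routine, the forward direction holding by definition of the maximum and the reverse following from monotonicity of $f$ together with the counit inequality $f(g(r))\le r$.

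I would also record that $f$ is surjective onto $D(Z,d_2)$, since every $s=d_2(w,w')$ equals $f(d_1(w,w'))$; equivalently $D(Z,d_2)=\ima(f)$. Part (2) is then immediate from the standard fact that the counit of a Galois connection is the identity on the image of the left adjoint: for $r\in\ima(f)$ one has $f(g(r))=r$. Hence if $d_1(z,z')=g(r)$, then $d_2(z,z')=f(d_1(z,z'))=f(g(r))=r$. (Alternatively, $f(g(r))\le r$ gives $d_2(z,z')\le r$, and choosing $w,w'$ with $d_2(w,w')=r$ forces $d_1(w,w')\le g(r)$, so that metric specialization yields $d_2(z,z')\ge r$.)

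For part (3), the key observation is that the adjunction specializes, for any $z,z'\in Z$ and with $a=d_1(z,z')$, to the pairwise equivalence
\[
d_1(z,z')\le g(r)\iff d_2(z,z')\le r.
\]
Since a subset $\sigma\subseteq Z$ is a simplex of $\VR((Z,d_1),g(r))$ exactly when $d_1(z,z')\le g(r)$ for all $z,z'\in\sigma$, and a simplex of $\VR((Z,d_2),r)$ exactly when $d_2(z,z')\le r$ for all $z,z'\in\sigma$, this pairwise equivalence gives equality of the two complexes at once. I expect no serious obstacle: the only points requiring care are that metric specialization is precisely what makes $f$ well-defined, and the recognition that part (3)—seemingly the geometric heart of the lemma—collapses to the pairwise adjunction equivalence rather than requiring any independent argument about diameters.
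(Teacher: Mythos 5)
Your proposal is correct, and for parts (2) and (3) it takes a cleaner route than the paper's own proof. Part (1) is essentially identical: the same two-inequality argument for well-definedness and the same explicit formula $g(r)=\max\{a\in D(Z,d_1)\mid f(a)\le r\}$ for the right adjoint (the paper writes this as $\max_{d_2(z,z')\le r} d_1(z,z')$, which is the same set). Where you diverge is that you first record the surjectivity of $f$ onto $D(Z,d_2)$ and then deduce part (2) from the general fact that the counit of a Galois connection is the identity on the image of the left adjoint, i.e.\ $f(g(r))=r$ for $r\in\ima(f)$; the paper instead runs an explicit element-chasing argument, repeatedly invoking metric specialization on chosen witness pairs $(a,b)$ and $(c,d)$. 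Likewise for part (3): you observe that the adjunction $f(a)\le r\iff a\le g(r)$, instantiated at $a=d_1(z,z')$, is exactly the pairwise statement $d_2(z,z')\le r\iff d_1(z,z')\le g(r)$, which yields equality of the two Vietoris--Rips complexes simplex-by-simplex in one step; the paper proves the two inclusions separately, reducing to edges and invoking part (2) together with metric specialization for one direction. Your version buys a shorter, more conceptual argument in which part (3) no longer depends on part (2), while the paper's version keeps everything at the level of concrete distance comparisons and makes the role of the metric-specialization hypothesis visible at each step. Both are complete; there is no gap in your argument.
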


\begin{proof}
  $ $
    \begin{enumerate}
        \item Assume that $d_1(z_1, z_2) = d_1(z_3, z_4)$, then we have $d_1(z_1, z_2) \leq d_1(z_3, z_4)$ and $d_1(z_1, z_2) \geq d_1(z_3, z_4)$. As $d_2$ is a metric specialization of $d_1$, we obtain $d_2(z_1, z_2) \leq d_2(z_3, z_4)$ and $d_2(z_1, z_2) \geq d_2(z_3, z_4)$. Hence, $d_2(z_1, z_2) = d_2(z_3, z_4)$, and $f$ is well-defined. The order-preserving property of $f$ follows from the fact that $d_2$ is a metric specialization of $d_1$. The right adjoint of $f$ is given by
        \begin{align*}
            g : D(Z, d_2) &\to D(Z, d_1) \\
                   r &\mapsto \max f^{-1} ([0,r]) = \max_{\substack{z,z' \in Z \\ d_2 (z, z')\leq r}} d_1 (z,z'). 
        \end{align*}
        \item Assume that $d_1 (z,z') = g(r)$.
        Let $a,b \in Z$ be such that $d_2 (a,b) = r$. Then, $d_1(a,b) \leq g(r) = d_1 (z,z')$ by definition of $g$. Therefore, $r = d_2(a,b) \leq d_2 (z,z')$ since $d_2$ is a metric specialization of $d_1$. By definition of $g$, there exists $c,d \in Z$ such that $d_1 (c,d) = g(r)$ and $d_2 (c,d)\leq r$. Therefore, $d_1 (z,z') =g(r) = d_1 (c,d)$. In particular, we have that $d_1 (z,z') \leq d_1 (c,d)$, which implies that $d_2 (z,z') \leq d_2 (c,d) \leq r$ as $d_2$ is a metric specialization of $d_1$. Hence, $d_2(z,z') = r$. 
        \item It suffices to verify that $\VR((Z,d_2), r)$ and $\VR((Z,d_1), g(r))$ have the same set of edges for every $r \in D(Z,d_2)$. Let $\{a,b \}$ be an edge in $\VR((Z,d_1), g(r))$. That is, $d_1 (a,b) \leq g(r)$. Let $z,z' \in Z$ be such that $d_1(z,z') = g(r)$. Then, $d_1(a,b)\leq g(r) = d_1(z,z')$. Since $d_2$ is a metric specialization of $d_1$, we obtain $d_2(a,b)\leq d_2(z,z')$. Moreover, by~\cref{technical condition required for filtraton-preserving}, we have that $d_2(z,z') = r$. Thus, $d_2(a,b) \leq d_2(z,z') = r$. Therefore, $\{a, b \}$ is an edge in $\VR((Z,d_2), r)$. So, we obtain $\VR((Z,d_2), r) \supseteq \VR((Z,d_1), g(r))$.

        Now, assume that $d_2(c,d) \leq r$ for some $c,d \in Z$. Then, $d_1(c,d) \leq g(r)$ by definition of $g$. Thus, $\VR((Z,d_2), r) \subseteq \VR((Z,d_1), g(r))$.

    \end{enumerate}
\end{proof}

\begin{proof}[Proof of~\cref{prop: unweighted vr is functor}]
    Let $(X,d_X)$ and $(Y,d_Y)$ be two finite (pseudo) metric spaces in $\cmet$ and let $(X, d_X) \xtwoheadrightarrow{h} (Y,d_Y)$ be a metric morphism. Let
    \[
    \mathfrak{R}: X \xtwoheadleftarrow{\phi_X} Z \xtwoheadrightarrow{\phi_Y} Y
    \]
    be the tripod determined by $h$. That is, $Z = X$, $\phi_X = \mathrm{id}_X$, and $\phi_Y = h$. Note that, as mentioned earlier, $d_2 := \phi_Y^* d_Y$ is a metric specialization of $d_1 := \phi_X^* d_X =\mathrm{id}_X^* d_X = d_X $. Observe that the pullback filtration of Vietoris-Rips filtrations is the Vietoris-Rips filtration of the pullback metric. That is, we have
    \begin{itemize}
        \item $\VR(X,d_X)_Z^{\phi_X} = \VR(Z,\phi_X^* d_X)= \VR(Z,d_1)$,
        \item $\VR(Y,d_Y)_Z^{\phi_Y} = \VR(Z,\phi_Y^* d_Y)= \VR(Z,d_2)$.
    \end{itemize}
    Observe that $D(X,d_X) = D(Z,d_1)$ and $D(Y,d_Y) = D(Z,d_2)$. Thus, by~\cref{lem: technical lemma for functorial VR}, we obtain a Galois connection 
    \begin{align*}
        f : D(X,d_X) = D(Z,d_1) \leftrightarrows D(Y,d_Y) = D(Z,d_2) : g,
    \end{align*}
    where $f$ if given by $f(d_1(z,z')) = d_2(z,z')$. By~\cref{condition equiv to metric morphism} in~\cref{lem: technical lemma for functorial VR}, we obtain
    \[
    \VR(Y,d_Y)_Z^{\phi_Y} (r) = \VR((Z,d_2),r) = \VR((Z,d_1), g(r)) = \VR(X,d_X)_Z^{\phi_X}.
    \]
    Hence, the tuple $(f,g,Z,\phi_X,\phi_Y)$ is a filtration-preserving morphism from $\VR(X,d_X)$ to $\VR(Y,d_Y)$.

    In order to show that $d_\Fil^E (\VR(X,d_X), \VR(Y,d_Y)) \leq 2 \cdot d_\cmet^E ((X,d_X),(Y,d_Y))$, by~\cref{prop: general edit distance stability}, it suffices to verify that 
    \[
    \cost_\Fil \left((f,g,Z,\phi_X,\phi_Y) \right) \leq 2 \cdot \cost_\cmet \left ( (X,d_X) \xrightarrow{h} (Y,d_Y) \right).
    \]
    We obtain this inequality as follows.
    \begin{align*}
        \cost_\Fil \left((f,g,Z,\phi_X,\phi_Y) \right) &= \dis(f) & \\
                                                   &= \max_{r,r' \in D(X,d_X)} \bigl | |r-r'| - |f(r) - f(r')|  \bigr| & \\
                                                   &= \max_{r,r' \in D(X,d_X)} \bigl | (r-r') - (f(r) - f(r'))  \bigr| &\text{Since $f$ is order-preserving}\\
                                                   &\leq 2 \cdot \max_{r \in D(X,d_X)} |r-f(r)| &\text{by the triangle inequality}\\
                                                   &= 2 \cdot \max_{z,z' \in Z} \bigl | d_1(z,z') - d_2(f(z), f(z'))  \bigr| & \\
                                                   &= 2 \cdot \max_{z,z' \in Z} \bigl | d_X(\phi_X(z), \phi_X(z')) - d_Y(\phi_Y(z), \phi_Y(z'))   \bigr| & \\
                                                   &= 2 \cdot \max_{x,x' \in X} \bigl | d_X(x, x') - d_Y(h(x), h(x'))   \bigr| & \\
                                                   &= 2 \cdot \dis(h) = 2 \cdot \cost_\cmet (h). &
    \end{align*}
\end{proof}

\subsection{Weighted Functorial Vietoris-Rips}
Let $Q \subseteq [0,\infty)$ be a finite subset containing 0. Let $\mu_Q$ be a probability measure with full support on $Q$ and let $d_Q$ be the restriction of the Euclidean distance on $[0,\infty)$ to $Q$. Then, $(Q, d_Q, \mu_Q)$ is a finite mm-space. Moreover, $Q$ inherits the linear order on $[0,\infty)$. 
\begin{definition}
    A quadruple $(Q, d_Q, \mu_Q, \leq_Q)$ is called a finite linear mm-space if
    \begin{itemize}
        \item $Q \subseteq [0,\infty)$ is finite and $0\in Q$,
        \item $d_Q$ is the restriction of the Euclidean distance on $[0,\infty)$ to $Q$,
        \item $\mu_Q$ is a probability measure with full support on $Q$,
        \item $\leq_Q$ is the restriction of the linear order on $[0,\infty)$ to $Q$.
    \end{itemize}
\end{definition}

\paragraph{Category of Weighted Filtrations.}
We now introduce the category of weighted filtrations.
\begin{definition}[Weighted filtration]\label{defn: weighted filtration}
    A $1$-parameter filtration $\Ffunc : Q \to \subcx(K)$ is called a \emph{weighted filtration} if there is a probability measure $\mu_Q$ on $Q$ with full support. We refer to $\mu_Q$ as the \emph{weight} on the filtration. We denote the weighted filtration by $w\textnormal{-}\Ffunc : (Q,\mu_Q) \to \subcx(K)$.
\end{definition}

\begin{definition}[Weighted filtration-preserving morphisms]
    Let $V$ and $W$ be two finite sets and $K_V$ and $K_W$ be two simplicial complexes over the vertex sets $V$ and $W$ respectively. Let $w\textnormal{-}\Ffunc : Q  \to \subcx (K_V)$ and $w\textnormal{-}\Gfunc : R \to \subcx(K_W)$ be two weighted filtrations. That is, $\Ffunc$ and $\Gfunc$ are $1$-parameter filtrations and there are probability measure $\mu_Q$ and $\mu_R$ with support on $Q$ and $R$ respectively. A \emph{weighted filtration-preserving morphism} from $\Ffunc$ to $\Gfunc$ is a tuple $(f,g, Z, \pi_V, \pi_W)$ such that
    \begin{itemize}
        \item $Z$ is a finite set and $V \xtwoheadleftarrow{\pi_V} Z \xtwoheadrightarrow{\pi_W} W$ is a tripod,
        \item $f: Q \leftrightarrows R : g$ is a Galois connection,
        \item $\Gfunc_Z^{\pi_W} (r) = \Ffunc_Z^{\pi_V} (g(r))$ for all $r\in R$,
        \item $f_\# \mu_Q = \mu_R$.
    \end{itemize}
\end{definition}

Note that a weighted filtration-preserving morphism is a filtration-preserving morphism with the extra condition that the left adjoint in the filtration-preserving morphism is also a Monge map. Let $\wfil$ denote the category whose objects are weighted filtrations and whose morphisms are weighted filtration-preserving morphisms.

\begin{definition}[$p$-cost of a weighted filtration-preserving morphism]
    Let $\Ffunc : Q \to \subcx(K_V)$ and $\Gfunc : R \to \subcx(K_W)$ be two weighted filtrations with weights $\mu_Q$ and $\mu_R$ respectively. Let $(f,g,Z,\pi_V,\pi_W)$ be a weighted filtration-preserving morphism from $\Ffunc$ to $\Gfunc$. For $p\in [1,\infty)$, we define the $p$-cost of this morphism as
    \[
    \cost_{\wfil}^p ((f,g,Z,\pi_V,\pi_W)) := \dis_p(f) = \left ( \sum_{q_1, q_2 \in Q} \bigl |  |q_1 - q_2| - |f(q_1)-f(q_2)|  \bigr |^p \mu_Q (q_1)\; \mu_Q(q_2) \right ) ^{\frac{1}{p}}.
    \]
    For $p=\infty$, we define the $\infty$-cost of this morphism as
    \[
    \cost_{\wfil}^\infty ((f,g,Z,\pi_V,\pi_W)) := \dis_\infty (f) = \sup_{q_1,q_2 \in Q} \bigl | |q_1 - q_2| - |f(q_1)-f(q_2)| \bigr |.
    \]
\end{definition}

For a fixed $p \in [1,\infty]$, we now have the notion of \emph{$p$-edit distance} as follows. Every morphism in $\wfil$ has a nonnegative $p$-cost associated with it. As discussed in \cref{subsec: edit}, we obtain an edit distance between objects of $\wfil$. We refer to this edit distance as the $p$-edit distance and denote it by $d_\wfil^{E,p}$.

\paragraph{Weighted Vietoris-Rips.}
Let $(X,d_X, \mu_X)$ be a finite mm-space. Recall that $$\VR(X,d_X) : D(X,d_X) \to \subcx (2^X \setminus \emptyset)$$ is a $1$-parameter filtration. We now assign a probability measure to $D(X,d_X)$ and obtain a weighted filtration as follows.

\begin{definition}[Global distribution of distances, {\cite[Definition 5.4]{facundo-gw}}]\label{defn: global dist of dist}
    Let $(X,d_X,\mu_X)$ be a finite mm-space. Consider the surjection
    \[
    d_X : X \times X \to D(X,d_X) = \ima(d_X).
    \]
    We define the \emph{global distribution of distances} of $X$ as the pushforward measure
    \[
    \mu_{GDD(X)} := (d_X)_\# (\mu_X \otimes \mu_X).
    \]
\end{definition}

\begin{definition}\label{def:wVR-filt}
    The weighted Vietoris-Rips filtration of a finite mm-space $(X,d_X,\mu_X)$ is the Vietoris-Rips filtration of the metric space $(X,d_X)$, $\VR(X,d_X)$, with weights $\mu_{GDD(X)}$. In order to distinguish the weighted Vietoris-Rips filtration from the unweighted one, we use the notation $\WVR (X,d_X)$ to indicate the presence of weights.
\end{definition}

\begin{prop}\label{prop: weighted vr is functor}
    The assignment 
    \[
        (X,d_X,\mu_X) \to \left( \WVR(X,d_X) : D(X,d_X) \to \subcx(2^X \setminus \emptyset) \right)
    \]
    is a functor from $\cmm$ to $\wfil$. Moreover, for any $(X,d_X,\mu_X)$ and $(Y,d_Y,\mu_Y)$ in $\cmm$, we have that
    \begin{align*}
    d_\wfil^{E,p} (\WVR(X,d_X,\mu_X), \WVR(Y,d_Y,\mu_Y)) &\leq 2 \cdot d_\cmm^{E,p} ((X,d_X,\mu_X),(Y,d_Y,\mu_Y)) \\ &= 4 \cdot \dgw{p}((X,d_X,\mu_X),(Y,d_Y,\mu_Y)).
    \end{align*}
\end{prop}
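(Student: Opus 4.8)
The plan is to mirror the proof of \cref{prop: unweighted vr is functor}, supplying the two ingredients that are genuinely new in the weighted setting: the measure-compatibility axiom $f_\#\mu_Q=\mu_R$ required of a weighted filtration-preserving morphism, and the control of the $p$-cost. By \cref{prop: general edit distance stability} (applied with constant $M=2$) together with the identity $d_\cmm^{E,p}=2\dgw{p}$ from \cref{prop: p-gw as edit}, it suffices to show that every morphism $h:(X,d_X,\mu_X)\to(Y,d_Y,\mu_Y)$ in $\cmm$ induces a weighted filtration-preserving morphism $\WVR(X)\to\WVR(Y)$ with $\cost_\wfil^p\le 2\cdot\cost_\cmm^p(h)$. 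Exactly as in the unweighted case I would take the tripod $X\xtwoheadleftarrow{\mathrm{id}_X}X\xtwoheadrightarrow{h}Y$ determined by $h$, set $Z=X$, $d_1=d_X$ and $d_2=h^*d_Y$, and invoke \cref{lem: technical lemma for functorial VR} to produce the Galois connection $f:D(X,d_X)\leftrightarrows D(Y,d_Y):g$ with $f(d_X(x,x'))=d_Y(h(x),h(x'))$. The proof of \cref{prop: unweighted vr is functor} already establishes that the underlying tuple $(f,g,Z,\mathrm{id}_X,h)$ is a filtration-preserving morphism between the unweighted Vietoris--Rips filtrations, so only the weight axiom and the cost estimate remain.

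For the weight axiom I must verify $f_\#\,\mu_{GDD(X)}=\mu_{GDD(Y)}$. The key observation is the commutation $f\circ d_X=d_Y\circ(h\times h)$ as maps $X\times X\to D(Y,d_Y)$, which holds by the very definition of $f$. Pushing $\mu_X\otimes\mu_X$ forward and using \cref{defn: global dist of dist}, I get
\[
f_\#\,\mu_{GDD(X)}=f_\#(d_X)_\#(\mu_X\otimes\mu_X)=(d_Y)_\#\,(h\times h)_\#(\mu_X\otimes\mu_X).
\]
Since $h$ is a Monge map, $h_\#\mu_X=\mu_Y$, and the pushforward of a product measure along a product map is the product of the pushforwards, so $(h\times h)_\#(\mu_X\otimes\mu_X)=\mu_Y\otimes\mu_Y$; hence the right-hand side equals $\mu_{GDD(Y)}$. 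This is precisely the extra axiom, so $(f,g,Z,\mathrm{id}_X,h)$ is a weighted filtration-preserving morphism.

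It then remains to bound $\cost_\wfil^p=\dis_p(f)$ by $2\,\dis_p(h)=2\,\cost_\cmm^p(h)$. Because $f$ is order-preserving, for $q_1\ge q_2$ in $Q=D(X,d_X)$ one has $\bigl||q_1-q_2|-|f(q_1)-f(q_2)|\bigr|=\bigl|(q_1-f(q_1))-(q_2-f(q_2))\bigr|\le\psi(q_1)+\psi(q_2)$, where $\psi(q):=|q-f(q)|$. Viewing $\psi(q_1)+\psi(q_2)$ as a function on $(Q\times Q,\,\mu_Q\otimes\mu_Q)$ with $\mu_Q=\mu_{GDD(X)}$ and applying Minkowski's inequality, together with the fact that $\mu_Q$ is a probability measure so each marginal integrates to $1$, gives $\dis_p(f)\le 2\,\|\psi\|_{L^p(\mu_Q)}$; the $p=\infty$ bound $\dis_\infty(f)\le 2\sup_q\psi(q)$ follows from the same pointwise estimate. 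Finally I would unwind $\|\psi\|_{L^p(\mu_Q)}^p=\sum_{q}|q-f(q)|^p\,\mu_{GDD(X)}(q)$ through the pushforward defining $\mu_{GDD(X)}$: substituting $q=d_X(x,x')$ turns $|q-f(q)|$ into $|d_X(x,x')-d_Y(h(x),h(x'))|$ and recovers exactly $\dis_p(h)^p$ (and $\dis_\infty(h)$ in the $p=\infty$ case, using that $\mu_X$ has full support so $\mathrm{supp}(\mu_X\otimes\mu_X)=X\times X$), yielding $\dis_p(f)\le 2\,\dis_p(h)$.

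I expect the $p$-cost estimate to be the only genuinely delicate step: one must linearize the nested absolute values via the order-preservation of $f$, apply a product-measure $L^p$ triangle inequality to obtain the constant $2$, and then correctly re-express the filtration-level distortion as the mm-space distortion of $h$ through the global distribution of distances. The measure-compatibility check is routine once the commutation $f\circ d_X=d_Y\circ(h\times h)$ is in hand, and the remaining functoriality bookkeeping (preservation of identities and of composites, where the weight conditions compose alongside the unweighted data) follows the unweighted pattern.
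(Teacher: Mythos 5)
Your proposal is correct and follows essentially the same route as the paper: the same Galois connection from \cref{lem: technical lemma for functorial VR} (your tripod $X \xtwoheadleftarrow{\mathrm{id}_X} X \xtwoheadrightarrow{h} Y$ is isomorphic to the paper's graph tripod $gr(h)$), the same commutative square $f\circ d_X = d_Y\circ(h,h)$ for the Monge condition, and the same reduction to \cref{prop: general edit distance stability} and \cref{prop: p-gw as edit}. The only (immaterial) variation is that you extract the constant $2$ via Minkowski's inequality in $L^p(\mu_Q\otimes\mu_Q)$ where the paper uses the pointwise convexity bound $|a-b|^p \le 2^{p-1}(|a|^p+|b|^p)$; both yield $\dis_p(f)\le 2\,\dis_p(h)$.
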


\begin{proof}
    Let $h : (X,d_X,\mu_X) \to (Y,d_Y,\mu_Y)$ be a order-preserving mm-space function. We now show that $h$ induces a weighted filtration-preserving morphism from $\WVR(X,d_X,\mu_X)$ to $\WVR (Y,d_Y, \mu_Y)$. Since $h$ is a Monge map, it must be surjective. Let $gr(h) := \{ (x,h(x)) \in X\times Y \mid x\in X \}$ be the graph of $h$. Then, we obtain the following tripod
    \[
    \mathfrak{R}_h : X \xtwoheadleftarrow{\pi_X} gr(h) \xtwoheadrightarrow{\pi_Y} Y,
    \]
    where $\pi_X$ and $\pi_Y$ are the component-wise projections from $gr(h)$ to $X$ and $Y$ respectively. Let $d_1 = \pi_X^* d_X$ and $d_2 = \pi_Y^* d_Y$. Then, we have that $d_2$ is a metric specialization of $d_1$. Observe that the pullback filtration of Vietoris-Rips filtration is the Vietoris-Rips filtration of the pullback metric. That is, 
    \begin{itemize}
        \item $\VR(X,d_X)_{gr(h)}^{\phi_X} = \VR(gr(h),\phi_X^* d_X)= \VR(gr(h),d_1)$,
        \item $\VR(Y,d_Y)_{gr(h)}^{\phi_Y} = \VR(gr(h),\phi_Y^* d_Y)= \VR(gr(h),d_2)$.
    \end{itemize}
    Then, by~\cref{lem: technical lemma for functorial VR}, we have that the function $f : D(X,d_X) = D(gr(h), d_1) \to D(gr(h), d_2) = D(Y,d_Y)$ defined by $f (d_X(x,x')) = d_Y (h(x), h(x'))$ has a right adjoint $g$ and the tuple $(f,g,gr(h),\pi_X,\pi_Y)$ is a filtration-preserving morphism. To conclude that we obtain a weighted filtration-preserving morphism, we need to check that $f$ is a Monge map. Recall that the weights in the weighted Vietoris-Rips filtration are given by the global distribution of distances. So, we need to check that $f_\# (\mu_{GDD(X)}) = \mu_{GDD(Y)}$. This equality follows from the commutative diagram
    \begin{equation*}
        \begin{tikzcd}
        X\times X \arrow[r, "d_X"] \arrow[d, "{(h,h)}"'] & {D(X,d_X)} \arrow[d, "f"] \\
        Y\times Y \arrow[r, "d_Y"]                      & {D(Y,d_Y)}               
        \end{tikzcd}
    \end{equation*}
    and the fact that $(h,h)_\# (\mu_X \otimes \mu_X) = \mu_Y \otimes \mu_Y$. Now, by~\cref{prop: general edit distance stability}, it suffices to show that $\cost_{\wfil}^{p} ((f,g,gr(h), \pi_X,\pi_Y) \leq 2 \cdot \cost_{\cmm}^{p} (h)$ in order to show that
    \[
    d_\wfil^{E,p} (\WVR(X,d_X,\mu_X), \WVR(Y,d_Y,\mu_Y)) \leq 2 \cdot d_\cmm^{E,p} ((X,d_X,\mu_X),(Y,d_Y,\mu_Y)).
    \]

    We obtain this inequality, when $p$ is finite, as follows.
    \footnotesize
    \begin{align*}
        &\left( \cost_{\wfil}^{p} ((f,g,gr(h), \pi_X,\pi_Y)) \right)^p \\
         &= \left( \dis_p(f) \right)^p \\ 
        &= \sum_{r, r' \in D(X,d_X)} \bigl | |r-r'| - |f(r) - f(r')|  \bigr |^p  \mu_{GDD(X)}(r) \mu_{GDD(X)}(r') \\
        &= \sum_{\substack{x_1, x_2,\\ x_3, x_4 \in X }} \Big | |d_X(x_1, x_2)-d_X(x_3, x_4)| - |d_Y(h(x_1), h(x_2)) - d_Y(h(x_3), h(x_4))| \Big |^p \mu(x_1)\mu(x_2)\mu(x_3)\mu(x_4) \\
        &\leq \sum_{\substack{x_1, x_2,\\ x_3, x_4 \in X}} \Big | d_X(x_1, x_2) - d_Y (h(x_1), h(x_2)) - (d_X(x_3, x_4) - d_Y (h(x_3), h(x_4))) \Big |^p \mu(x_1)\mu(x_2)\mu(x_3)\mu(x_4) \\
        &\leq \sum_{\substack{x_1, x_2,\\ x_3, x_4 \in X}} 2^{p-1} \Big ( |d_X(x_1, x_2) - d_Y (h(x_1), h(x_2))|^p + |d_X(x_3, x_4) - d_Y (h(x_3), h(x_4))|^p \Big ) \mu(x_1)\mu(x_2)\mu(x_3)\mu(x_4) \\
        &= 2^p (\dis_p(h))^p \\
        &= 2^p \cdot \left( \cost_{\cmm}^{p} (h) \right)^p.
    \end{align*}
    \normalsize
    When $p = \infty$, we obtain this inequality as follows.
    \begin{align*}
         \cost_{\wfil}^{\infty} ((f,g,gr(h), \pi_X,\pi_Y))   &=  \dis_\infty(f) \\
         &= \max_{r, r' \in D(X,d_X)} | |r-r'| - |f(r) - f(r')| \\
         &= \max_{\substack{x_1, x_2,\\ x_3, x_4 \in X }} \Big | |d_X(x_1, x_2)-d_X(x_3, x_4)| - |d_Y(h(x_1), h(x_2)) - d_Y(h(x_3), h(x_4))| \Big | \\
         &\leq \max_{\substack{x_1, x_2,\\ x_3, x_4 \in X}} \Big | d_X(x_1, x_2) - d_Y (h(x_1), h(x_2)) - (d_X(x_3, x_4) - d_Y (h(x_3), h(x_4))) \Big | \\
         &\leq  \max_{\substack{x_1, x_2,\\ x_3, x_4 \in X}}  \Big ( |d_X(x_1, x_2) - d_Y (h(x_1), h(x_2))| + |d_X(x_3, x_4) - d_Y (h(x_3), h(x_4))| \Big ) \\
         &= 2 \cdot \cost_{\cmm}^{\infty} (h)
    \end{align*}
\end{proof}

\section{Functorial (Weighted) Persistence Diagrams}
\label{sec:ph_functor}

In this section, we introduce the category of persistent diagrams, $\Dgm$, and the category of weighted persistence diagrams. $\wDgm$. We show that the degree-$d$ (weighted) persistence diagram assignment to a (weighted) filtration is a functor. As a result of this functoriality, we obtain a stability result.

\subsection{(Unweighted) Persistence Diagrams}\label{subsec: unweighted persistence diagrams}

Persistence diagrams are non-negative integer-valued functions on the set of intervals of a finite linear poset $Q$. The content presented in this section largely revisits the outcomes and framework established in~\cite{mccleary2022edit}. We now define the set of intervals of a poset.

\begin{definition}
    Let $Q$ be any poset. For $q_1 \leq q_2 \in Q$, we define the \emph{interval} $[q_1, q_2]$ to be the set $[q_1, q_2] := \{ q \in Q \mid q_1 \leq q \leq q_2 \}$. Then, we define the set of all intervals as 
    \[
    \bar Q := \{ [q_1, q_2] \mid q_1 \leq q_2 \in Q \}.
    \]
    We denote by $\diag (\bar Q)$ the \emph{diagonal} of $\bar Q$. That is,
    \[
    \diag (\bar Q) := \{ [q,q] \mid q\in Q \}
    \]
    Let $\leq_\times$ denote the product order on $Q\times Q$. Restricting $\leq_\times$ to $\bar Q$, the set of intervals is endowed with a partial order. Namely, for $[q_1, q_2], [q_3, q_4] \in \bar Q$, we write $[q_1, q_2] \leq_\times [q_3, q_4]$ if $q_1 \leq q_3$ and $q_2\leq q_4$.

\end{definition}

\begin{definition}
    Let $Q$ be a poset and $d_Q$ be a metric on $Q$. We define the metric $d_{\bar Q}$ on $\bar Q$ as 
    \[
    d_{\bar Q} ([q_1, q_2], [q_3, q_4]) := \max (d_Q(q_1, q_3), d_Q(q_2, q_4)).
    \]
    For a subposet $Q \subseteq [0,\infty)$, and for two intervals $I= [q_1, q_2]$ and $J = [q_3, q_4]$ in $\bar Q$, we define
    \[
    I-J := (q_1 - q_3, q_2 - q_4) \in \RR^2.
    \]
    With this notation, we have 
    \[
    d_{\bar Q} (I,J) = ||I-J||_\infty,
    \]
    where $||\cdot||_\infty$ denotes the $\ell$-infinity norm.
\end{definition}

\begin{remark}
    A Galois connection $f : Q \leftrightarrows R : g$ induces a Galois connection on the poset of intervals
    \[ \bar f : \bar Q \leftrightarrows \bar R\ : \bar g, \]
    where $\bar f$ and $\bar g$ are defined component-wisely.
\end{remark}

\begin{definition}
    Let $Q\subseteq [0,\infty)$ be a finite subset containing $0$. A \emph{persistence diagram} is a function (not necessarily order-preserving) $\sigma : \bar{Q} \to \NN$.
\end{definition}

\begin{definition}[Persistence diagram morphism, {\cite[Definition 6.1]{mccleary2022edit}}]\label{defn: persistence dgm morphism}
    A \emph{persistence diagram morphism} from a $\sigma : Q \to \NN$ to $\tau : R \to \NN$ is a Galois connection $f : P \leftrightarrows  Q : g$ such that $\bar{f}_\# \sigma = \tau$ on $\bar Q\setminus \diag (\bar Q)$. That is, 
    \[
    \tau (J) = \sum_{\substack{I \in \bar Q \\ \bar f (I) = J}} \sigma (I)
    \]
    for all $J \in \bar Q \setminus \diag(\bar Q)$.
\end{definition}

Let $\Dgm$ denote the category whose objects are persistence diagrams and whose morphisms are persistence diagram morphisms.

\begin{definition}[Cost of a persistence diagram morphism]
    Let $ f : Q \leftrightarrows R : g$ be a morphism from a persistence diagram $\sigma : \bar Q \to \NN$ to $\tau : \bar R \to \NN$. We define the cost of this morphism as
    \begin{align*}
    \cost_{\Dgm} (f : Q \leftrightarrows R : g) :&= \dis (\bar f) \\
                                            &= \max_{I,J \in \bar Q} \big | ||I-J||_\infty - ||\bar f (I) - \bar f (J)||_\infty \big |.
    \end{align*}
\end{definition}

As every morphism in $\Dgm$ has a nonnegative cost associated with it, we have the notion of edit distance, denoted $d_\Dgm^E$, between objects in $\Dgm$, see~\cref{subsec: edit}.

\begin{prop}[{\cite[Proposition 3.4]{mccleary2022edit}}]\label{prop: same dis on intervals}
    Let $f : Q \to R$ be an order-preserving map between two posets $Q$ and $R$ that are endowed with metric $d_Q$ and $d_R$. Then, 
    \[
    \dis (\bar f) = \dis (f).
    \]
\end{prop}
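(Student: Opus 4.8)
The plan is to establish $\dis(\bar f) = \dis(f)$ by proving the two inequalities $\dis(\bar f) \le \dis(f)$ and $\dis(f) \le \dis(\bar f)$ separately. First I would unwind the definitions: since $\bar f$ acts componentwise, for intervals $I = [q_1,q_2]$ and $J = [q_3,q_4]$ in $\bar Q$ one has $\bar f(I) = [f(q_1), f(q_2)]$ and $\bar f(J) = [f(q_3),f(q_4)]$, and hence $d_{\bar Q}(I,J) = \max\big(d_Q(q_1,q_3),\, d_Q(q_2,q_4)\big)$ while $d_{\bar R}(\bar f(I), \bar f(J)) = \max\big(d_R(f(q_1),f(q_3)),\, d_R(f(q_2),f(q_4))\big)$. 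So $\dis(\bar f)$ is the maximum over quadruples $q_1 \le q_2$, $q_3 \le q_4$ of $\big|\max(a_1,a_2) - \max(b_1,b_2)\big|$, where $a_i$ and $b_i$ denote the corresponding pairwise distances in $Q$ and in $R$.

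For $\dis(\bar f) \le \dis(f)$, the workhorse is the elementary fact that $\max$ is $1$-Lipschitz in the $\ell^\infty$ sense: for reals, $\big|\max(a_1,a_2) - \max(b_1,b_2)\big| \le \max\big(|a_1-b_1|,\,|a_2-b_2|\big)$. I would prove this in one line from $a_i \le b_i + |a_i-b_i| \le \max(b_1,b_2) + \max_j|a_j-b_j|$ and the symmetric estimate. Applying it to the distances above bounds each summand of $\dis(\bar f)$ by $\max\big(|d_Q(q_1,q_3)-d_R(f(q_1),f(q_3))|,\ |d_Q(q_2,q_4)-d_R(f(q_2),f(q_4))|\big)$, and each of these two quantities is at most $\dis(f)$ by the definition of $\dis(f)$; taking the maximum over all quadruples then gives the inequality.

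For the reverse inequality $\dis(f) \le \dis(\bar f)$, I would restrict attention to the diagonal intervals $I = [q,q]$ and $J = [q',q']$, which always lie in $\bar Q$. For such intervals both maxima collapse, yielding $d_{\bar Q}(I,J) = d_Q(q,q')$ and $d_{\bar R}(\bar f(I), \bar f(J)) = d_R(f(q),f(q'))$, so the corresponding summand of $\dis(\bar f)$ equals exactly $|d_Q(q,q') - d_R(f(q),f(q'))|$. Maximizing over all $q,q' \in Q$ recovers $\dis(f)$, which is therefore dominated by $\dis(\bar f)$.

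The argument has no genuine obstacle; the only point requiring care is the max-Lipschitz inequality, which is precisely where the $\ell^\infty$-type structure of the metric $d_{\bar Q}$ and the componentwise definition of $\bar f$ interact. Combining the two inequalities yields $\dis(\bar f) = \dis(f)$.
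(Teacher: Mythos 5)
Your argument is correct: the two-sided estimate via the $1$-Lipschitz property of $\max$ (for the upper bound) and restriction to diagonal intervals (for the lower bound) is exactly the natural proof of this statement. The paper itself gives no proof, deferring to the cited reference \cite[Proposition 3.4]{mccleary2022edit}, and your self-contained argument fills that in correctly with no gaps.
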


\paragraph{Persistence Diagram of a Filtration.}

Let $Q = \{ 0=q_0 < q_1 < \cdots < q_n \} \subseteq [0,\infty)$ be a finite subset containing $0$. $K$ be a finite simplicial complex and let $\Ffunc : Q\to \subcx(K)$ be a filtration. Observe that, for any dimension $d\geq 0$, the inclusion of simplicial complexes $\Ffunc(q_i) \subseteq \Ffunc(q_j)$, for $q_i \leq q_j$, induces canonical inclusions on the cycle and boundary spaces. In particular, for any $i = 0,\ldots,n$, the $d$-th cycle and boundary spaces of $\Ffunc(q_i)$ can be identified with subspaces of $C_d^{\Ffunc(q_n)} = C_d^K$. 

\begin{center}
    \begin{tikzcd}
\Zfunc_d(\Ffunc(q_i)) \arrow[r, hook]                 & \Zfunc_d(\Ffunc(q_j)) \arrow[r, hook]                 & \Zfunc_d(K) \arrow[r, hook] & C_d^K \\
\Bfunc_d(\Ffunc(q_i)) \arrow[r, hook] \arrow[u, hook] & \Bfunc_d(\Ffunc(q_j)) \arrow[r, hook] \arrow[u, hook] & \Bfunc_d(K) \arrow[u, hook] &      
    \end{tikzcd}
\end{center}

\begin{definition}[Birth-death functions, {\cite[Definition 5.6]{mccleary2022edit}}]\label{defn: bd functions}
    Let $\Ffunc : Q \to \subcx(K)$ be a $1$-parameter filtration. For any degree $d\geq 0$, the $d$-th~\emph{birth-death function} associated to $\Ffunc$ is defined as the function $\ZB_d^\Ffunc : \bar Q \to \ZZ$ given by
    \[
    \ZB_d^\Ffunc ([q_i, q_j]) := \Zfunc_d (\Ffunc(q_i))\cap \Bfunc_d (\Ffunc(q_j)).
    \]
\end{definition}

\begin{definition}[Degree-$d$ persistence diagram of a filtration, {\cite[Definition 8.1]{mccleary2022edit}}]
    Let $\Ffunc : Q \to \subcx(K)$ be a $1$-parameter filtration. The \emph{degree-$d$ persistence diagram} of $\Ffunc$, denoted $\PD_q^\Ffunc$ is defined to be the M\"obius inverse of the $d$-th birth-death function of $\Ffunc$. That is,
    \[
    \PD_d^\Ffunc := \partial_{\bar Q} \ZB_d^\Ffunc : \bar Q \to \ZZ
    \]
\end{definition}

For instance, from the (standard) Vietoris-Rips filtration (\Cref{def:VR-filt})  we obtain the (unweighted) persistence diagram  $\PD_d^\VR$.

\begin{remark}
    Note that the birth-death functions in~\cite{mccleary2022edit,orthogonal-mobius} are also defined for 'infinite' intervals of the form $[q_i, \infty)$ to account for topological features that are born at some point and never die. However, since we focus exclusively on the Vietoris-Rips filtration in this paper, every topological feature in degree $d > 0$ will eventually die, and in degree $0$, every persistence diagram will have exactly one interval of the form $[0, \infty)$ with multiplicity 1. Therefore, we may safely disregard the infinite intervals in our setting.
\end{remark}

Note that the objects of the category $\Dgm$ are certain functions whose targets are $\NN$. It is not, a priori, immediately clear that the degree-$d$ persistence diagram of a $1$-parameter filtration takes only non-negative values. We refer to~\cite[Section 9.1]{mccleary2022edit} and~\cite[Proposition 5.10]{gal-conn} for the non-negativity of persistence diagrams of $1$-parameter filtrations.

\begin{prop}[Functoriality of persistence diagrams]\label{prop: functoriality of pd}
For any $d\geq0$, the assignment 
\begin{align*}
    \PD_d : \Fil &\to \Dgm \\
    \Ffunc &\mapsto \PD_d^\Ffunc
\end{align*}
    is a functor. Moreover, 
    \[
    d_{\Dgm}^E (\PD_d^\Ffunc, \PD_d^\Gfunc) \leq d_{\Fil}^E (\Ffunc, \Gfunc)
    \]
    for any $\Ffunc$ and $\Gfunc$ in $\Fil$.
\end{prop}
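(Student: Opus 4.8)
The plan is to split the statement into three pieces: (i) that $\PD_d$ is well-defined and functorial; (ii) that every morphism of $\Fil$ is sent to a morphism of $\Dgm$ of no larger cost; and (iii) that the edit-distance inequality then follows formally. Piece (iii) is immediate: once (ii) holds with constant $M=1$, \cref{prop: general edit distance stability} gives $d_\Dgm^E(\PD_d^\Ffunc,\PD_d^\Gfunc)\le d_\Fil^E(\Ffunc,\Gfunc)$. For the cost half of (ii), observe that a filtration-preserving morphism $(f,g,Z,\pi_V,\pi_W)$ has cost $\dis(f)$, while the $\Dgm$-morphism it should induce is the Galois connection $f:Q\leftrightarrows R:g$ itself, whose cost is $\dis(\bar f)$. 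By \cref{prop: same dis on intervals}, $\dis(\bar f)=\dis(f)$, so the two costs are literally equal; hence $M=1$ works. (Object-level well-definedness, i.e. that $\PD_d^\Ffunc$ lands in $\NN$, is the cited non-negativity of persistence diagrams of one-parameter filtrations.)

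The heart of the matter is the morphism half of (ii): showing that $f:Q\leftrightarrows R:g$ is a morphism of persistence diagrams, which by \cref{defn: persistence dgm morphism} means the pushforward identity $(\bar f)_\sharp\PD_d^\Ffunc=\PD_d^\Gfunc$ holds off the diagonal of $\bar R$ (I will in fact establish it on all of $\bar R$, which is stronger). Writing $\PD_d^\Ffunc=\partial_{\bar Q}\ZB_d^\Ffunc$ and $\PD_d^\Gfunc=\partial_{\bar R}\ZB_d^\Gfunc$, and using that $f\leftrightarrows g$ induces the interval Galois connection $\bar f:\bar Q\leftrightarrows\bar R:\bar g$, I would apply Rota's Galois Connection Theorem (\cref{thm:rgct}) to $\bar f,\bar g$ and the function $\ZB_d^\Ffunc$, obtaining $(\bar f)_\sharp\,\partial_{\bar Q}\ZB_d^\Ffunc=\partial_{\bar R}\,(\bar g)^\sharp\ZB_d^\Ffunc$. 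Thus $(\bar f)_\sharp\PD_d^\Ffunc=\partial_{\bar R}\big((\bar g)^\sharp\ZB_d^\Ffunc\big)$, and the entire claim reduces to the pointwise identity $(\bar g)^\sharp\ZB_d^\Ffunc=\ZB_d^\Gfunc$ on $\bar R$.

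To prove this last identity I would route through the pullback filtrations over the common complex $K_Z$. The defining condition $\Gfunc_Z^{\pi_W}=\Ffunc_Z^{\pi_V}\circ g$ gives, by unwinding $\ZB_d$ and the component-wise formula $\bar g([r_i,r_j])=[g(r_i),g(r_j)]$, the clean identity $\ZB_d^{\Gfunc_Z^{\pi_W}}=(\bar g)^\sharp\ZB_d^{\Ffunc_Z^{\pi_V}}$ on $\bar R$. What is then needed is a single genuinely homological lemma: pulling a filtration back along a surjection $\pi:Z\twoheadrightarrow V$ leaves its birth–death function unchanged, i.e. $\ZB_d^{\Ffunc_Z^{\pi}}=\ZB_d^\Ffunc$ as functions on $\bar Q$ (and the analogous statement for $\Gfunc$). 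Granting this, substituting $\ZB_d^{\Gfunc_Z^{\pi_W}}=\ZB_d^\Gfunc$ and $\ZB_d^{\Ffunc_Z^{\pi_V}}=\ZB_d^\Ffunc$ yields $\ZB_d^\Gfunc=(\bar g)^\sharp\ZB_d^\Ffunc$, closing the argument.

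I expect this pullback-invariance lemma to be the main obstacle, as it is the one step that is homological rather than order-theoretic. My approach would be to note that $\pi$ descends to a simplicial surjection $\Ffunc_Z^{\pi}(q)\twoheadrightarrow\Ffunc(q)$ (the $\pi$-image of a simplex of the pullback is a face of some $\sigma\in\Ffunc(q)$), and to choose a section $s$ picking one representative in each fiber, which gives simplicial inclusions $\Ffunc(q)\hookrightarrow\Ffunc_Z^{\pi}(q)$ with $\pi\circ s=\mathrm{id}$. Since $s(\pi(z))$ and $z$ always lie in a common fiber, and each fiber spans a simplex in every pullback complex, the composite $s\circ\pi$ is contiguous to the identity at every filtration value, with these contiguities compatible with the filtration inclusions. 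Hence $\pi$ and $s$ induce mutually inverse isomorphisms of the entire one-parameter family of $d$-cycle and $d$-boundary spaces, and in particular preserve $\dim\big(\Zfunc_d(\Ffunc(q_i))\cap\Bfunc_d(\Ffunc(q_j))\big)=\ZB_d^\Ffunc([q_i,q_j])$ for all $q_i\le q_j$. Finally, the functoriality bookkeeping — identities to identities and composites to composites — I would verify last, using that $f\mapsto\bar f$ and the pushforward of diagrams are each compatible with composition.
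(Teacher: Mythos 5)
Your overall architecture (cost equality via \cref{prop: same dis on intervals}, the reduction to \cref{prop: general edit distance stability} with $M=1$, the application of RGCT, and routing the morphism condition through the pullback filtrations) matches the paper, and the identity $\ZB_d^{\Gfunc_Z^{\pi_W}}=(\bar g)^\sharp\ZB_d^{\Ffunc_Z^{\pi_V}}$ is correct since both sides are computed in chain complexes over the same vertex set $Z$ and the subcomplexes are literally equal. The gap is your ``single genuinely homological lemma'': the claim $\ZB_d^{\Ffunc_Z^{\pi}}=\ZB_d^{\Ffunc}$ is false, and consequently so is the identity $(\bar g)^\sharp\ZB_d^{\Ffunc}=\ZB_d^{\Gfunc}$ on all of $\bar R$ that you reduce to. The quantity $\ZB_d^{\Ffunc}([q_i,q_j])=\dim\bigl(\Zfunc_d(\Ffunc(q_i))\cap\Bfunc_d(\Ffunc(q_j))\bigr)$ decomposes as $\dim\Bfunc_d(\Ffunc(q_i))+\dim\ker\bigl(H_d(\Ffunc(q_i))\to H_d(\Ffunc(q_j))\bigr)$, and the first summand is not a homotopy invariant. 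Your contiguity argument correctly shows that $\pi$ and a section $s$ induce a homotopy equivalence at every filtration value, which preserves homology and hence persistent homology, but it does not preserve $\dim\Bfunc_d$ and therefore does not preserve $\ZB_d$. Concretely: take $V=\{v\}$, $Q=\{0<1\}$, $\Ffunc(0)=\Ffunc(1)=\{\{v\}\}$, and $\pi:Z=\{z_1,z_2\}\twoheadrightarrow V$. Then $\Ffunc_Z^{\pi}(q)$ is the full edge on $Z$, so $\ZB_0^{\Ffunc}([0,1])=0$ while $\ZB_0^{\Ffunc_Z^{\pi}}([0,1])=\dim\bigl(\Bbbk^2\cap\langle z_2-z_1\rangle\bigr)=1$; the discrepancy occurs already on the off-diagonal interval $[0,1]$. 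Your announced intention to establish $(\bar f)_\sharp\PD_d^{\Ffunc}=\PD_d^{\Gfunc}$ on \emph{all} of $\bar R$ rather than off the diagonal was the warning sign: the diagonal is exactly where the non-invariant terms live, and they do not all vanish.

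The repair is to weaken the lemma to what is actually needed and what is true: the M\"obius inverses, not the birth--death functions, of $\Ffunc$ and $\Ffunc_Z^{\pi}$ agree on $\bar Q\setminus\diag(\bar Q)$ (the terms $\dim\Bfunc_d$ cancel in the alternating sum defining $\partial_{\bar Q}\ZB_d$ away from the diagonal). This is precisely how the paper proceeds: it applies RGCT to the pullback filtrations to get $\PD_d^{\Gfunc_Z^{\pi_W}}={\bar f}_\sharp\PD_d^{\Ffunc_Z^{\pi_V}}$, and then invokes the cited fact \cite[Corollary 4.1]{memoli17} that $\PD_d^{\Ffunc}=\PD_d^{\Ffunc_Z^{\pi_V}}$ and $\PD_d^{\Gfunc}=\PD_d^{\Gfunc_Z^{\pi_W}}$ on nondiagonal intervals, which suffices because \cref{defn: persistence dgm morphism} only requires the pushforward identity off the diagonal. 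Everything else in your proposal survives this correction unchanged.
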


\begin{proof}
        Let $V$ and $W$ be two finite sets and $K_V$ and $K_W$ be two simplicial complexes over the vertex sets $V$ and $W$ respectively. Let $\Ffunc : Q  \to \subcx (K_V)$ and $\Gfunc : R \to \subcx(K_W)$ be two $1$-parameter filtrations. Let $(f,g, Z, \pi_V, \pi_W)$ be a morphism from $\Ffunc$ to $\Gfunc$. That is,
    \begin{itemize}
        \item $Z$ is a finite set and $V \xtwoheadleftarrow{\pi_V} Z \xtwoheadrightarrow{\pi_W} W$ is a tripod,
        \item $f: Q \leftrightarrows R : g$ is a Galois connection,
        \item $\Gfunc_Z^{\pi_W} (r) = \Ffunc_Z^{\pi_V} (g(r))$ for all $r\in R$.
    \end{itemize}
    Observe that the last condition above implies that
    $\ZB_d^{\Gfunc_Z^{\pi_W}} = \ZB_d^{\Ffunc_Z^{\pi_V}} \circ \bar g$. Then, by RGCT (\cref{thm:rgct}), we obtain
    \[
    PD_d^{\Gfunc_Z^{\pi_W} } = \partial_{\bar R} \ZB_d^{\Gfunc_Z^{\pi_W}} = \partial_{\bar R} \left(\ZB_d^{\Ffunc_Z^{\pi_V}} \circ \bar g \right) = {\bar f}_\sharp \left(\partial_{\bar Q} \ZB_d^{\Ffunc_Z^{\pi_V}}\right) = {\bar f}_\sharp \left( \PD_d^{\Ffunc_Z^{\pi_V}} \right).
    \]
    That is, the Galois connection $f: Q \leftrightarrows R : g$ is a persistence diagram morphism from $\PD_d^{\Ffunc_Z^{\pi_V}}$ to $PD_d^{\Gfunc_Z^{\pi_W} }$. 
    By~\cite[Corollary 4.1]{memoli17}, persistence diagram of a filtration and its pullback filtration are identical on nondiagonal intervals. That is, $\PD_d^\Ffunc = \PD_d^{\Ffunc_Z^{\pi_V}}$ on $\bar Q \setminus \diag (\bar Q)$ and $\PD_d^\Gfunc = \PD_d^{\Gfunc_Z^{\pi_W}}$ on $\bar R \setminus \diag (\bar R)$. Thus, the Galois connection $f: Q \leftrightarrows R : g$ is a persistence diagram morphism from $\PD_d^{\Ffunc}$ to $PD_d^{\Gfunc }$. Hence, $\PD_d$ is a functor.

    The inequality
        \[
    d_{\Dgm}^E (\PD_d^\Ffunc, \PD_d^\Gfunc) \leq d_{\Fil}^E (\Ffunc, \Gfunc)
    \]
    follows from~\cref{prop: general edit distance stability} and~\cref{prop: same dis on intervals} as $\PD_d$ is a functor and $\cost_{\Fil} ((f,g, Z, \pi_V, \pi_W)) = \dis (f) = \dis (\bar f) = \cost_{\Dgm} (f: Q \leftrightarrows R : g)$.
\end{proof}

\begin{proof}[Poof of~\cref{thm: classical edit distance stability of pd}]
    By~\cref{prop: unweighted vr is functor} and~\cref{prop: functoriality of pd}, the result follows.
\end{proof}

\subsection{Weighted Persistence Diagrams}

\begin{definition}[Weighted persistence diagrams]\label{defn: weighted persistence diagrams}
    Let $Q \subseteq [0,\infty)$ be a finite set containing $0$. A \emph{weighted persistence diagram}, written $w\textnormal{-}\sigma : (\bar Q, \mu_{\bar Q}) : \bar Q \to \NN$, is persistence diagram $\sigma : \bar Q \to \NN$ together with a probability measure $\mu_{\bar Q}$ on $\bar Q$ such that $\sigma (I) > 0 $ implies $\mu_{\bar Q} (I) >0$ for every $I \in \bar Q \setminus \diag (\bar Q)$.
\end{definition}

\begin{definition}[Weighted persistence diagram morphisms]
    Let $w\textnormal{-}\sigma : (\bar Q, \mu_{\bar Q}) \to \NN$ and $w\textnormal{-}\tau : (\bar R, \mu_{\bar R}) \to \NN$ be two weighted persistence diagrams. A \emph{weighted persistence diagram morphism} from $w\textnormal{-}\sigma $ to $w\textnormal{-}\tau$ is a Galois connection $
    f : Q \leftrightarrows R : g$ such that 
    \begin{itemize}
        \item $f : Q \leftrightarrows R : g$ is a persistence diagram morphism from $\sigma : Q\to \NN$ to $\tau : R \to \NN$,
        \item $\bar f : (\bar Q, \mu_{\bar Q}) \to (\bar R, \mu_{\bar R})$ is a Monge map.
    \end{itemize}
\end{definition}

\begin{definition}\label{defn: deformation}
    Let $I_1 = [a_1,b_1]$, $I_2 = [a_2,b_2]$, $J_1 = [c_1,d_1]$, and $J_2= [c_2,d_2]$ be intervals in $\RR$. We define the \emph{displacement} associated to $I_1$, $I_2$, $J_1$, and $J_2$ as
    \[
    \defo (I_1, I_2, J_1, J_2) := \max 
    \begin{cases}
        &|(a_1-a_2) - (c_1-c_2)|, \\
        &|(a_1-b_2) - (c_1-d_2)|, \\
        &|(b_1-a_2) - (d_1-c_2)|, \\
        &|(b_1-b_2) - (d_1-d_2)|
    \end{cases}
    \]
\end{definition}
For an interval $I = [a,b] \subseteq \RR$, let $\len(I) := |a-b|$ denote the length of the interval. The displacement $\defo (I_1, I_2, J_1, J_2)$ measures similar is  the pair $(I_1, I_2)$  to the pair $(J_1, J_2)$ by comparing $I_1-I_2 \in \RR^2$ with $J_1 - J_2 \in \RR^2$, $\len(I_1)$ with $\len(J_1)$, and $\len(I_2)$ with $\len(J_2)$. To be precise, one can see that $\defo (I_1, I_2, J_1, J_2) = 0$ if and only if $I_1 - I_2 = J_1 - J_2$ and $\len(I_i) = \len(J_i)$ for $i=1,2$.

The following proposition relates the notion of displacement with the notion of distortion of an order-preserving map.

\begin{prop}\label{prop: meaning of dpl}
    Let $Q, R\subseteq [0,\infty)$ be finite subsets. Let $f : Q \to R$ be an order-preserving map. Then,
    \[
    \dis(f) = \dis (\bar f) = \max_{I_1, I_2 \in \bar Q} \defo (I_1, I_2, \bar f (I_1), \bar f (I_2))
    \]
\end{prop}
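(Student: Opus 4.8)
The first equality $\dis(f) = \dis(\bar f)$ is exactly \cref{prop: same dis on intervals}, so the entire task reduces to establishing the second equality $\dis(f) = \max_{I_1, I_2 \in \bar Q} \defo(I_1, I_2, \bar f(I_1), \bar f(I_2))$. The plan is to introduce the auxiliary function $g : Q \to \RR$ defined by $g(q) := q - f(q)$ and to rewrite both sides of the desired identity in terms of the range of $g$.

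First I would exploit order-preservation of $f$: for any $q, q' \in Q$ with $q \le q'$ we have $f(q) \le f(q')$, so that
\[
\bigl| |q - q'| - |f(q) - f(q')| \bigr| = \bigl| (q' - q) - (f(q') - f(q)) \bigr| = |g(q') - g(q)|.
\]
Since this expression is symmetric in $q$ and $q'$, taking the maximum yields $\dis(f) = \max_{q, q' \in Q} |g(q) - g(q')|$; that is, $\dis(f)$ is the diameter of the image $g(Q) \subseteq \RR$. Next I would expand the displacement for arbitrary intervals $I_1 = [q_1, q_2]$ and $I_2 = [q_3, q_4]$ in $\bar Q$, using $\bar f(I_1) = [f(q_1), f(q_2)]$ and $\bar f(I_2) = [f(q_3), f(q_4)]$. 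Substituting into \cref{defn: deformation}, each of the four entries takes the form $|(q_i - q_j) - (f(q_i) - f(q_j))| = |g(q_i) - g(q_j)|$, so that
\[
\defo(I_1, I_2, \bar f(I_1), \bar f(I_2)) = \max\bigl\{ |g(q_1) - g(q_3)|,\, |g(q_1) - g(q_4)|,\, |g(q_2) - g(q_3)|,\, |g(q_2) - g(q_4)| \bigr\}.
\]

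With both sides expressed through $g$, the two inequalities become immediate. For $\le$, every entry above is a difference $|g(q) - g(q')|$ with $q, q' \in Q$, hence bounded by $\dis(f)$; maximizing over all $I_1, I_2$ gives $\max_{I_1, I_2} \defo \le \dis(f)$. For $\ge$, I would specialize to the diagonal intervals $I_1 = [q, q]$ and $I_2 = [q', q']$, for which all four entries collapse to $|g(q) - g(q')|$; ranging over $q, q'$ then recovers $\dis(f)$, giving $\max_{I_1, I_2} \defo \ge \dis(f)$. Combining the bounds closes the chain.

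The only step requiring genuine care—and the main obstacle—is the order-preservation reduction: it is precisely this hypothesis that lets us discard the inner absolute values $|f(q) - f(q')|$ and collapse all four displacement entries uniformly to the single function $g$. Without it the entries would not simplify in a compatible way, and the identification of $\dis(f)$ with the diameter of $g(Q)$ would fail. Once this reduction is in place, the remainder is a short chain of identities together with the two one-line bounds above.
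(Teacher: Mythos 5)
Your proof is correct and follows essentially the same route as the paper's: both hinge on the observation that order-preservation lets you drop the inner absolute values, both reduce each displacement entry to a pointwise distortion term, and both use diagonal intervals $[q,q]$, $[q',q']$ to recover $\dis(f)$ as a lower bound. Your repackaging via $g(q) = q - f(q)$ and the diameter of $g(Q)$ is a slightly cleaner way to organize the same argument (it avoids the paper's ``without loss of generality'' over which of the four entries attains the maximum), but it is not a different proof.
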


\begin{proof}
    We already have the first equality by~\cref{prop: same dis on intervals}. Let $q_1, q_2\in Q$ be such that $\dis(f) = \big| |q_1-q_2| - |f(q_1) - f(q_2)| \big|$. Observe that, since $f$ is order-preserving, we have $$\big | |q_1-q_2| - |f(q_1) - f(q_2)| \big |=  \big | (q_1-q_2) - (f(q_1) - f(q_2)) \big |.$$ Thus, 
    \begin{align*}
        \dis (f) &= \big | |q_1-q_2| - |f(q_1) - f(q_2)| \big | \\
                 &= \big | (q_1-q_2) - (f(q_1) - f(q_2)) \big | \\
                 &= \defo ([q_1, q_1], [q_2, q_2], \bar f ([q_1, q_1]) , \bar f ([q_2, q_2])) \\
                 &\leq \max_{I_1, I_2 \in \bar Q} \defo (I_1, I_2, \bar f (I_1), \bar f (I_2))
    \end{align*}
    Similarly, let $I_1=[q_3, q_4], I_2=[q_5,q_6] \in \bar Q$ be such that 
    \[
    \defo (I_1, I_2, \bar f (I_1), \bar f (I_2)) = \max_{I_1, I_2 \in \bar Q} \defo (I_1, I_2, \bar f (I_1), \bar f (I_2))
    \] 
    Without loss of generality, assume that $\defo (I_1, I_2, \bar f (I_1) \bar f (I_2) = \big|(q_3 - q_5) - (f(q_3) - f(q_5))\big|$. Since $f$ is order-preserving, we have that
    \[
    \big|(q_3 - q_5) - (f(q_3) - f(q_5))\big| = \big||q_3 - q_5| - |f(q_3) - f(q_5)|\big|.
    \]
    Thus, we obtain
    \begin{align*}
        \max_{I_1, I_2 \in \bar Q} \defo (I_1, I_2, \bar f (I_1), \bar f (I_2)) &= \big||q_3 - q_5| - |f(q_3) - f(q_5)|\big| \\
        &\leq \max_{q,q' \in Q} \big | |q-q'| - |f(q)-f(q')|  \big | \\
        &= \dis(f).
    \end{align*}

\end{proof}

Let $\wDgm$ denote the category whose objects are weighted persistence diagrams and whose morphisms are weighted persistence diagram morphisms.

\begin{definition}[p-cost of a weighted persistence diagram morphism]
    Let $ f : Q \leftrightarrows R : g$ be a morphism from a weighted persistence diagram $w\textnormal{-}\sigma : (\bar Q, \mu_{\bar Q}) \to \NN$ to another one $w\textnormal{-}\tau : (\bar R , \mu_{\bar R})\to \NN$. For $p\in [1,\infty)$, we define the $p$-cost of this morphism as
    \[
    \cost_{\wDgm}^p (f : Q \leftrightarrows R : g) := \left ( \sum_{I_1, I_2, \in \bar Q} \defo (I_1, I_2, \bar f (I_1), \bar f(I_2))^p \; \mu_{\bar Q}(I_1) \; \mu_{\bar Q}(I_2)\right )^\frac{1}{p}
    \]
    For $p=\infty$, we define the $\infty$-cost of this morphism as
    \[
    \cost_{\wDgm}^\infty (f : Q \leftrightarrows R : g) := \max_{I_1, I_2 \in \bar Q} \defo (I_1, I_2, \bar f (I_1), \bar f (I_2)).
    \]
\end{definition}

For a fixed $p \in [1,\infty]$, we now have the notion of \emph{$p$-edit distance} as follows. Every morphism in $\wDgm$ has a nonnegative $p$-cost associated with it. As discussed in \cref{subsec: edit}, we obtain an edit distance between objects of $\wDgm$. We refer to this edit distance as the $p$-edit distance and denote it by $d_\wDgm^{E,p}$.

\paragraph{Weighted Persistence Diagram of a Weighted Filtration.}\label{paragraph: w pf of w filtration}
Let $$(Q\subseteq [0,\infty), |\cdot|, \mu_Q, \leq)$$ be a finite linear mm-space. Consider the map $\flip : Q\times Q \to \bar Q$ given by

\begin{equation*}
    \flip(q_1, q_2) :=
    \begin{cases}
        [q_1, q_2] & \text{ if } q_1\leq q_2 \\
        [q_2, q_1] & \text{ if } q_2 < q_1
    \end{cases}
\end{equation*}
Notice that since $Q$ is linearly ordered, the map $\flip$ is defined for every pair $(q_1, q_2) \in Q\times Q$. We define the probability measure $\mu_{\bar Q, \flip}$ on $\bar Q$ as the pushforward $\mu_{\bar Q, \flip}:= \flip_\# (\mu_Q \otimes \mu_Q)$.

\begin{definition}
    Let $w\textnormal{-}\Ffunc : (Q, \mu_Q) \to \subcx(K)$ be a weighted filtration. For $d\geq 0$, we define the degree-$d$ weighted persistence diagram of $w\textnormal{-}\Ffunc$ as the weighted persistence diagram $w\textnormal{-}\PD_d^\Ffunc : (\bar Q, \mu_{\bar Q, \flip} ) \to \NN$.
\end{definition}

For example, from the weighted Vietoris-Rips filtration (\Cref{def:wVR-filt}), we obtain the weighted persistence diagram $w\textnormal{-}\PD_d^\VR$.

\begin{remark}\label{rem: gdd can be captured from wpd}
Note that, for any degree $d$, we can recover the GDD of $X$ from the weighted Vietoris-Rips persistence diagram $w\textnormal{-}\PD_d^\VR(X)$ as follows.
\begin{itemize}
    \item Let $(Q, \mu_Q) := (\ima (d_X), \mu_{GDD(X)})$.
    \item Then, the weight on $w\textnormal{-}\PD_d^\VR(X)$ is given by $\mu_{\bar Q, \flip}:= \flip_\# (\mu_Q \otimes \mu_Q)$.
    \item Then, the measure defined by \[
    \mu_{Q\times Q, \mathsf{unflip}} (q_1,q_2) := \begin{cases}
        \frac{\mu_{\bar Q, \flip}([q_1,q_2])}{2} \;\;\;\;\;\;\;\; &\text{if } q_1 < q_2 \\
        \frac{\mu_{\bar Q, \flip}([q_2,q_1])}{2} \;\;\;\;\;\;\;\; &\text{if } q_2 < q_1 \\
        \mu_{\bar Q, \flip}([q_1,q_2]) \;\;\;\;\; &\text{if } q_1 =q_2
    \end{cases}
    \]
    is precisely the product measure $\mu_Q \otimes \mu_Q$ on $Q\times Q$. That is, $\mu_{Q\times Q, \mathsf{unflip}} = \mu_Q \otimes \mu_Q$.
    \item Thus, $\mu_{GDD(X)} = \mu_Q$ can be recovered as the pushforward of $\mu_{Q\times Q, \mathsf{unflip}}$ under the projection map $\pi_1 : Q\times Q \to Q$. That is, $\mu_{GDD(X)} = (\pi_1)_\# (\mu_{Q\times Q, \mathsf{unflip}})$.
\end{itemize}
    In particular, if two mm-spaces have different global distributions of distances, then their weighted Vietoris-Rips persitence diagrams must differ, where this difference is captured by the weights on these diagrams.
\end{remark}

\begin{figure}
    \centering
    \subfloat[\centering $(X,d_X,\mu_X)$]{{\includegraphics[width=7cm]{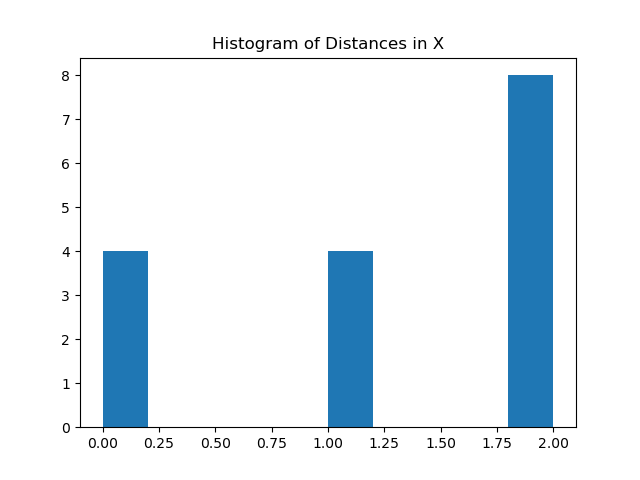} }}%
    \qquad
    \subfloat[\centering $(Y,d_Y,\mu_Y)$]{{\includegraphics[width=7cm]{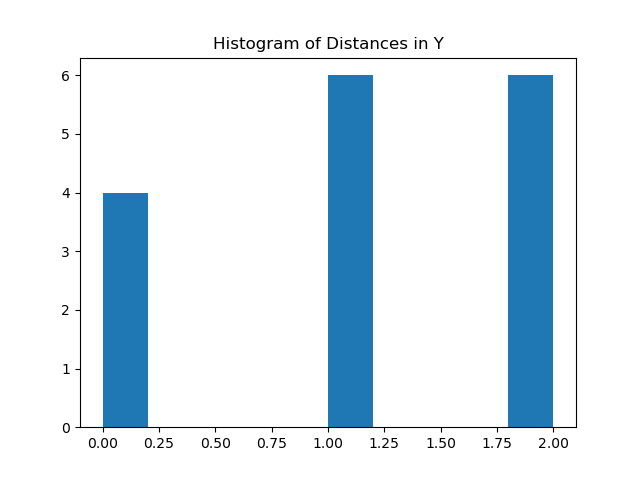} }}%
    \caption{The histograms of distances in $(X,d_X, \mu_X)$ (on the left) and in $(Y,d_Y, \mu_Y)$ (on the right) described in~\cref{ex:ums}.}%
    \label{fig:histograms}%
\end{figure}

\begin{ex}\label{ex:ums}
    Consider the 4-point mm-spaces $(X,d_X, \mu_X)$ and $(Y,d_Y, \mu_Y)$ where $\mu_X$ and $\mu_Y$ are uniform measures on $4$ points and the distance matrices for $d_X$ and $d_Y$ are given by
    \[
    d_X := 
    \begin{bmatrix}
        0 & 1 & 2 & 2 \\
        1 & 0 & 2 & 2 \\
        2 & 2 & 0 & 1 \\
        2 & 2 & 1 & 0
    \end{bmatrix} \;\;\;\;\text{and}  \;\;\;
    d_Y := \begin{bmatrix}
        0 & 1 & 1 & 2 \\
        1 & 0 & 1 & 2 \\
        1 & 1 & 0 & 2 \\
        2 & 2 & 2 & 0
    \end{bmatrix},
    \]
    respectively. Observe that $(X,d_X)$ and $(Y,d_Y)$ are in fact ultrametric spaces and their dendrogram representations are illustrated in~\cref{fig:ums}. As shown in~\cite[Example 3.18]{zhou2024ephemeral}, $(X,d_X)$ and $(Y,d_Y)$ have the same Vietoris-Rips persistence diagrams in all degrees. However, the weighted persistence diagrams of $(X,d_X, \mu_X)$ and $(Y,d_Y, \mu_Y)$ come equipped with different weights as the global distributions of distances of these mm-spaces are different. To be precise, observe that both $\mu_{GDD(X)}$ and $\mu_{GDD(Y)}$ are measures on $\ima(d_X) = \ima(d_Y) = \{ 0<1<2\} =: Q$ with different histograms as depicted in~\cref{fig:histograms}. Thus, as explained in~\cref{rem: gdd can be captured from wpd}, the weighted Vietoris-Rips persistence diagrams of these two mm-spaces must differ.
\end{ex}

\begin{prop}[Functoriality of weighted persistence diagrams]\label{prop: weighted pd are functorial}
    For any $d\geq 0$, the assignment
    \begin{align*}
        w\textnormal{-}\PD_d : \wfil &\to \wDgm \\
                                w\textnormal{-}\Ffunc &\mapsto w\textnormal{-}\PD_d^\Ffunc
    \end{align*}
    is a functor. Moreover, for every $p\in [1,\infty]$
    \[
    d_{\wDgm}^{E,p} (w\textnormal{-}\PD_d^\Ffunc, w\textnormal{-}\PD_d^\Gfunc) \leq 4^{\frac{1} {p}} \cdot d_{\wfil}^{E,p} (w\textnormal{-}\Ffunc, w\textnormal{-}\Gfunc)
    \]
\end{prop}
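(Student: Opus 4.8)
The proof divides into establishing functoriality and then the stability bound, and the plan is to offload most of the work onto the already-proved unweighted functoriality (\cref{prop: functoriality of pd}) and the general machinery of \cref{prop: general edit distance stability}, so that the only genuinely new content concerns the measures. For functoriality, recall that a weighted filtration-preserving morphism is an ordinary filtration-preserving morphism $(f,g,Z,\pi_V,\pi_W)$ subject to the extra Monge condition $f_\# \mu_Q = \mu_R$. By \cref{prop: functoriality of pd}, the underlying Galois connection $f : Q \leftrightarrows R : g$ is already a persistence diagram morphism from $\PD_d^\Ffunc$ to $\PD_d^\Gfunc$, so it remains only to check that $\bar f$ is a Monge map for the flip measures, i.e. $\bar f_\# \mu_{\bar Q, \flip} = \mu_{\bar R, \flip}$. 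The key is the commuting square $\bar f \circ \flip = \flip \circ (f\times f)$, valid because $f$ is order-preserving and hence sorting endpoints commutes with applying $f$. Pushing $\mu_Q \otimes \mu_Q$ through both sides and using $(f\times f)_\#(\mu_Q\otimes \mu_Q) = f_\#\mu_Q \otimes f_\#\mu_Q = \mu_R\otimes\mu_R$ yields the claim. I would also note that each $w\textnormal{-}\PD_d^\Ffunc$ is a legitimate object of $\wDgm$: for a nondiagonal interval $[q_i,q_j]$ one has $\mu_{\bar Q,\flip}([q_i,q_j]) = 2\,\mu_Q(q_i)\mu_Q(q_j) > 0$ by the full support of $\mu_Q$, so the support condition of \cref{defn: weighted persistence diagrams} holds automatically; preservation of identities and composites is then inherited from $\PD_d$.

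For the inequality, by \cref{prop: general edit distance stability} it suffices to bound the per-morphism cost, namely $\cost_{\wDgm}^p(f) \leq 4^{1/p}\,\cost_{\wfil}^p(f)$. The case $p=\infty$ is immediate and is in fact an equality: by \cref{prop: meaning of dpl} the left-hand side equals $\dis(\bar f) = \dis(f)$, which is precisely $\cost_{\wfil}^\infty(f) = \dis_\infty(f)$, and $4^{1/\infty}=1$. For finite $p$, write $\delta(x,y) := |(x-y)-(f(x)-f(y))|$; since $f$ is order-preserving this is exactly the integrand of $\dis_p(f)$, so $\dis_p(f)^p = \sum_{x,x'\in Q}\delta(x,x')^p\,\mu_Q(x)\mu_Q(x')$. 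Unwinding \cref{defn: deformation} with $\bar f(I_1)=[f(a_1),f(b_1)]$ shows that $\defo(I_1,I_2,\bar f(I_1),\bar f(I_2))$ is the maximum of the four numbers $\delta(a_1,a_2),\delta(a_1,b_2),\delta(b_1,a_2),\delta(b_1,b_2)$, whence $\defo(\cdots)^p \leq \delta(a_1,a_2)^p+\delta(a_1,b_2)^p+\delta(b_1,a_2)^p+\delta(b_1,b_2)^p$. The crucial move is then to transport the computation from intervals back to points: since $\mu_{\bar Q,\flip}=\flip_\#(\mu_Q\otimes\mu_Q)$, the double sum over $(I_1,I_2)\in\bar Q\times\bar Q$ against $\mu_{\bar Q,\flip}\otimes\mu_{\bar Q,\flip}$ equals the quadruple sum over $(p_1,p_2,p_3,p_4)\in Q^4$ against $\mu_Q^{\otimes 4}$, with $I_1=\flip(p_1,p_2)$ and $I_2=\flip(p_3,p_4)$.

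The main technical observation—and the step I expect to be most error-prone—is that the sorted endpoints satisfy $\{a_1,b_1\}=\{p_1,p_2\}$ and $\{a_2,b_2\}=\{p_3,p_4\}$, so the four cross-terms $(a_1,a_2),(a_1,b_2),(b_1,a_2),(b_1,b_2)$ coincide, as a multiset of ordered pairs, with $(p_1,p_3),(p_1,p_4),(p_2,p_3),(p_2,p_4)$, independently of which of the two orderings occurs in each flip. Consequently the quadruple sum splits into four pieces, each of the shape $\sum_{p_1,p_2,p_3,p_4}\delta(p_s,p_t)^p\,\mu_Q^{\otimes 4}$ with $s\in\{1,2\}$, $t\in\{3,4\}$; in every such piece the two free variables sum out (each $\mu_Q$ has total mass $1$), leaving $\sum_{p_s,p_t}\delta(p_s,p_t)^p\,\mu_Q(p_s)\mu_Q(p_t)=\dis_p(f)^p$. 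Adding the four identical copies gives $\cost_{\wDgm}^p(f)^p \leq 4\,\dis_p(f)^p = 4\,\cost_{\wfil}^p(f)^p$, and taking $p$-th roots produces the factor $4^{1/p}$. Invoking \cref{prop: general edit distance stability} with $M=4^{1/p}$ then yields $d_{\wDgm}^{E,p}(w\textnormal{-}\PD_d^\Ffunc, w\textnormal{-}\PD_d^\Gfunc) \leq 4^{1/p}\,d_{\wfil}^{E,p}(w\textnormal{-}\Ffunc, w\textnormal{-}\Gfunc)$, completing the proof. The only real care needed is in the multiset bookkeeping of the flip and in confirming that the $\max$-to-sum bound and the marginalization over the free indices combine to give exactly the constant $4$ rather than something larger.
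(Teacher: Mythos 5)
Your proposal is correct and follows essentially the same route as the paper: functoriality via \cref{prop: functoriality of pd} plus the commuting square $\bar f\circ\flip=\flip\circ(f\times f)$ for the Monge condition, and the bound via \cref{prop: general edit distance stability} after converting the interval sum to a quadruple sum over $Q^4$, applying the max-to-sum estimate on the four displacement terms, and marginalizing to get the constant $4$. Your extra care about the multiset bookkeeping of the flip and the verification that $w\textnormal{-}\PD_d^\Ffunc$ satisfies the support condition of \cref{defn: weighted persistence diagrams} are correct refinements of details the paper leaves implicit.
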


\begin{proof}
    Let $w\textnormal{-}\Ffunc : (Q,\mu_Q) \to \subcx(K_1)$ and $w\textnormal{-}\Gfunc (R,\mu_R) : \to \subcx (K_2)$ be two weighted filtrations and let $f : Q \leftrightarrows R : g$ be a weighted filtration-preserving morphism. Then, by~\cref{prop: functoriality of pd}, we have that $f : Q \leftrightarrows R : g$ is a persistence diagram morphism from $\PD_d^\Ffunc : \bar Q \to \NN$ to $\PD_d^\Gfunc : \bar R \to \NN$. Thus it remains to verify that $\bar f : (\bar Q, \mu_{\bar Q, \flip}) \to (\bar R, \mu_{\bar R, \flip})$ is a Monge map. The equality $(\bar f)_\# \mu_{\bar Q, \flip} = \mu_{\bar R, \flip}$ follows from the commutative diagram below. 

    \begin{center}
    \begin{tikzcd}
        Q\times Q \arrow[r, "f \times f"] \arrow[d, "\flip"'] & R\times R \arrow[d, "\flip"] \\
        \bar Q \arrow[r, "\bar f"]                                & \bar R                      
    \end{tikzcd}        
    \end{center}

    By~\cref{prop: general edit distance stability}, it suffices to verify that $\cost_{\wDgm}^{p} (f: Q \leftrightarrows R : g) \leq 4^\frac{1}{p} \cdot \dis_p (f)$ in order to show that
    \[
    d_\wDgm^{E,p} (w\textnormal{-}\PD_d^\Ffunc, w\textnormal{-}\PD_d^\Gfunc) \leq 4^\frac{1}{p} \cdot d_\wfil^{E,p} (w\textnormal{-}\Ffunc,w\textnormal{-}\Gfunc).
    \]
    We obtain this inequality, when $p$ is finite, as follows.
    \footnotesize
    \begin{align*}
    &\left (\cost_{\wDgm}^{p} (f: Q \leftrightarrows R : g) \right)^p \\
     &= \sum_{I,J \in \bar Q} \left (\defo(I,J, \bar f (I), \bar f (J))\right)^P \mu_{\bar Q, \flip}(I) \mu_{\bar Q, \flip}(J) \\
    &\leq \sum_{\substack{q_1, q_2, \\ q_3, q_4 \in Q}} \left( \begin{multlined}  \big | ( q_1- q_3) - (f(q_1) - f(q_3)) \big |^p + \big | (q_1 - q_4) - (f(q_1) - f(q_4))\big |^p \\ +\big | (q_2 - q_3) - (f(q_2) - f(q_3)) \big |^p + \big | (q_2 - q_4) - (f(q_2) - f(q_4))\big|^p\end{multlined} \right) \mu_Q (q_1) \mu_Q (q_2) \mu_Q (q_3) \mu_Q (q_4) \\
    &= 4 \cdot \sum_{q,q' \in Q} \big | q-q' - (f(q)-f(q'))\big |^p \mu_Q (q)\mu_Q(q') \\
    &= 4 \cdot \sum_{q,q' \in Q} \big | |q-q'| - |f(q)-f(q')| \big |^p \mu_Q(q) \mu_Q(q') \\
    &= 4 \cdot \left( \dis_p (f) \right)^p
    \end{align*}
\normalsize
When $p=\infty$, we obtain this inequality by~\cref{prop: meaning of dpl} as follows.
\begin{align*}
    \cost_{\wDgm}^{\infty} (f: Q \leftrightarrows R : g)  = \max_{I,J \in \bar Q} \defo(I,J, \bar f (I), \bar f (J)) = \dis (f) = \dis_\infty (f) 
\end{align*}
\end{proof}

\begin{proof}[Proof of~\cref{thm: p stability of weighted persistence diagrams  main thm}]
    By~\cref{prop: weighted pd are functorial} and~\cref{prop: weighted vr is functor}, the result follows.
\end{proof}

\section{OT-like Interpretation of Edit Distance Between (Weighted) Persistence Diagrams}\label{sec: ot-like interpret}

In this section, we show that the edit distance between (weighted) persistence diagrams can be realized in a way that does not require a categorical construction. Although the categorical construction is useful for stability proofs, it does not provide much insight into the interpretability of this distance. By realizing the edit distance between (weighted) persistence diagrams as an OT-like distance, we provide another view for this distance.

\subsection{Interpretation of Edit Distance Between Unweighted Persistence Diagrams}

The edit distance between unweighted persistence diagrams was first defined in~\cite{mccleary2022edit}. In that paper, the authors showed that the edit distance is bi-Lipschitz equivalent to the well-known bottleneck distance~\cite[Theorem 9.1]{mccleary2022edit}. In their proof, constructing a $1$-parameter family of persistence diagrams from a matching between two persistence diagrams was one of the main ingredients. In proofs situated in this section, we will follow a similar strategy of constructing a $1$-parameter family of persistence diagrams from a matching.

\begin{remark}
    In~\cite{mccleary2022edit}, the authors introduce the edit distance between 'signed' persistence diagrams. However, as noted in~\cite[Example 10.1]{edit-arxiv}, this definition can lead to trivial distances between distinct persistence diagrams due to a pathological path. To address this issue, a modification was proposed in~\cite{edit-arxiv}. In this paper, we focus exclusively on $1$-parameter filtrations, which always  yield nonnegative persistence diagrams rather than signed ones. In this case, the nontriviality of the edit distance is simply a consequence of the fact that it is bi-Lipschitz equivalent to the bottleneck distance. Notably, the proof of this equivalence in~\cite[Theorem 9.1]{mccleary2022edit} implicitly assumes nonnegativity.
\end{remark}

\begin{definition}[Matching]
    Let $\sigma : \bar Q \to \NN$ and $\tau : \bar R \to \NN$ be two persistence diagrams. A \emph{matching} between $\sigma$ and $\tau$ is a function $\gamma : \bar Q \times \bar R \to \NN $ such that
    \begin{itemize}
        \item $\sigma (I) = \sum_{J \in \bar R} \gamma (I,J)$ for every $I \in \bar Q \setminus\diag (\bar Q)$,
        \item $\tau (J) = \sum_{I \in \bar Q} \gamma (I,J)$ for every $J \in \bar R \setminus\diag (\bar R)$,
        \item $\gamma([0,0], [0,0]) = 1$.
    \end{itemize}
\end{definition}

In the definition above, the last condition is needed to guarantee that the matching $\gamma$ aligns the two persistence diagrams $\sigma$ and $\tau$ at their common interval $[0,0]$. We now introduce the displacement cost of a matching.

\begin{definition}[Displacement cost of a matching / displacement distance]
    Let $\sigma : \bar Q \to \NN$ and $\tau : \bar R \to \NN$ be two persistence diagrams and let $\gamma : \bar Q \times \bar R \to \NN$ be a matching between them. The \emph{displacement cost} of $\gamma$ is defined to be
    \[
    \defcost (\gamma) := \max_{\substack{I_1, I_2 \in \bar Q \\ J_1, J_2 \in \bar R \\ \gamma(I_1, J_1) >0 \\ \gamma(I_2, J_2) > 0}} \defo (I_1, I_2, J_1, J_2).
    \]
    We define the \emph{displacement distance}, denoted $d_\defo$, between two persistence diagrams $\sigma$ and $\tau$ as the infimum of all the displacement costs of matchings between $\sigma$ and $\tau$. That is,
    \[
    d_\defo (\sigma, \tau) := \inf_{\gamma} \defcost (\gamma).
    \]
\end{definition}

Note that $d_\defo$ satisfies the triangle inequality.

\begin{prop}\label{prop: interpretation unweighted}
    Let $\sigma : \bar Q \to \NN$ and $\tau : \bar R \to \NN$ be two persistence diagrams. Then, 
    \[
    d_{\Dgm}^E (\sigma, \tau) = d_\defo (\sigma,\tau)
    \]
\end{prop}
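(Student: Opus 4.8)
The plan is to prove the two inequalities separately. For $d_\defo(\sigma,\tau) \le d_{\Dgm}^E(\sigma,\tau)$ I would show that a single morphism induces a matching of no greater cost and then compose along a path. Given a persistence diagram morphism $f : Q \leftrightarrows R : g$ from $\sigma$ to $\tau$, define $\gamma_f(I,J) := \sigma(I)$ when $I$ is off-diagonal and $J = \bar f(I)$, set $\gamma_f([0,0],[0,0]) := 1$, and let $\gamma_f$ vanish otherwise. Because $f$ is a left adjoint it preserves the bottom element, so $f(0)=0$ and $\bar f([0,0]) = [0,0]$; together with the pushforward identity $\bar f_\# \sigma = \tau$ off the diagonal and the fact that $\bar f$ sends the diagonal into the diagonal, this makes $\gamma_f$ a matching. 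Every pair contributing to $\defcost(\gamma_f)$ has the form $(I_1,\bar f(I_1)),(I_2,\bar f(I_2))$, so by \cref{prop: meaning of dpl} its displacement is at most $\dis(\bar f) = \cost_{\Dgm}(f)$; hence $d_\defo \le \cost_{\Dgm}(f)$. Since $d_\defo$ is symmetric (transpose the matching) and obeys the triangle inequality, applying this edge by edge to an arbitrary path $\mathcal P$ from $\sigma$ to $\tau$ gives $d_\defo(\sigma,\tau) \le \cost_{\Dgm}(\mathcal P)$, and taking the infimum over paths yields the inequality.

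For the reverse inequality $d_{\Dgm}^E(\sigma,\tau) \le d_\defo(\sigma,\tau)$ I would adapt the one-parameter-family argument of \cref{prop: gh as edit}. Fix a matching $\gamma$ with $\defcost(\gamma) = \delta$ and list its positive entries with multiplicity as particles $(I^{(k)},J^{(k)})_{k=1}^N$, where $I^{(k)} = [a_k,b_k]$ and $J^{(k)} = [c_k,d_k]$ (diagonal sources and targets included). For $t\in[0,1]$ interpolate each particle linearly by $I^{(k)}_t := [(1-t)a_k + tc_k,\ (1-t)b_k + td_k]$, which is again a valid interval; let $Q_t$ consist of $0$ together with all endpoints of the intervals $I^{(k)}_t$, and set $\sigma_t([u,v]) := \#\{\,k : I^{(k)}_t = [u,v]\,\}$. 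The defining conditions of a matching give $\sigma_0 = \sigma$ and $\sigma_1 = \tau$ on off-diagonal intervals.

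Next I would discretize this family exactly as in \cref{prop: gh as edit}. Any two endpoints are affine in $t$ and hence agree at most once; let $0 = t_0 < t_1 < \cdots < t_m = 1$ be all such coincidence times and choose non-critical $c_i \in (t_i,t_{i+1})$. For the non-critical time $c_i$ and either of $t = t_i, t_{i+1}$, the endpoint-following assignment $\phi : Q_{c_i} \to Q_t$ (sending an endpoint value at time $c_i$ to the value of the same affine function at time $t$) is well defined, because two endpoints equal at the non-critical $c_i$ must be the same affine function; it is order-preserving since no coincidence occurs strictly between $c_i$ and $t$, and it fixes $0$. An order-preserving, bottom-preserving map of finite linear orders admits a right adjoint, and a particle count gives $\bar\phi_\# \sigma_{c_i} = \sigma_t$, so $\phi$ is a persistence diagram morphism. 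By \cref{prop: meaning of dpl},
\[
\cost_{\Dgm}(\phi) = \dis(\bar\phi) = \max_{I_1,I_2\in\bar{Q_{c_i}}} \defo\big(I_1,I_2,\bar\phi(I_1),\bar\phi(I_2)\big) \le |c_i - t|\cdot\delta,
\]
where the last step uses that the points of $Q_{c_i}$ are exactly the time-$c_i$ particle endpoints and that each endpoint difference is affine in $t$, so every such displacement equals $|c_i-t|$ times a corresponding term of some $\defo(I^{(k)},I^{(l)},J^{(k)},J^{(l)}) \le \delta$. The path $\sigma_{t_0} \leftarrow \sigma_{c_0} \to \sigma_{t_1} \leftarrow \cdots \to \sigma_{t_m}$ then has total cost $\sum_i (t_{i+1}-t_i)\delta = \delta$; prepending and appending the cost-zero morphisms induced by the order inclusions $Q_0 \hookrightarrow Q$ and $Q_1 \hookrightarrow R$ reconciles $\sigma_0,\sigma_1$ with $\sigma,\tau$, giving $d_{\Dgm}^E(\sigma,\tau) \le \delta$, and the infimum over $\gamma$ finishes the proof.

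The main obstacle is ensuring that the interpolating maps $\phi$ are genuine morphisms: one must prevent $\phi$ from being multivalued when two particle endpoints momentarily coincide, which is precisely why the intermediate times $c_i$ are chosen non-critical; and one must check the existence of the right adjoint (preservation of $0$) together with the off-diagonal bookkeeping identifying $\sigma_0,\sigma_1$ with $\sigma,\tau$ up to cost-zero changes of the ambient poset. Once these are in place, the affine scaling of $\defo$ and the telescoping of costs follow the template already established in \cref{prop: gh as edit}.
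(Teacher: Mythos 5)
Your proposal is correct and follows essentially the same route as the paper: one direction by extracting a matching $\gamma_f$ from each morphism along a path and bounding its displacement cost via \cref{prop: meaning of dpl}, the other by the linear-interpolation/critical-time argument adapted from \cite[Proposition 9.4]{mccleary2022edit}. The only differences are cosmetic — you spell out the matching and the $f(0)=0$ point that the paper delegates to a cited lemma, and you handle the identification of $\sigma_0,\sigma_1$ with $\sigma,\tau$ via explicit cost-zero inclusion morphisms, which is slightly more careful than the paper's ``observe that $\nu_0=\sigma$ and $\nu_1=\tau$.''
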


\begin{proof}
We first show $d_\defo(\sigma, \tau) \leq d_{\Dgm}^E (\sigma, \tau)$. Let
    \[
    \mathcal{P} : \; \; \rho_0 := \sigma \xleftrightarrow{\makebox[1cm]{$F_1$}} \rho_1 \xleftrightarrow{\makebox[1cm]{$F_2$}} \cdots \xleftrightarrow{\makebox[1cm]{$F_{k-1}$}} \rho_{k-1} \xleftrightarrow{\makebox[1cm]{$F_k$}} \tau =: \rho_k
    \]
    be an optimal path between $\sigma$ and $\tau$ in $\Dgm$. That is, each $F_i := (f_i, g_i)$ is a persistence diagram morphism (in either direction), and $d_{\Dgm}^E (\sigma, \tau) = \cost_\Dgm(\mathcal{P}) = \sum_{i=1}^k \dis(f_i).$ Observe that each $F_i$ determines a matching $\gamma_{F_i}$ between $\rho_i$ and $\rho_{i+1}$ with $d_\defcost (\gamma_{F_i}) \leq \dis(f_i)$, see ~\cite[Lemma 9.10]{mccleary2022edit}. Thus, we obtain
    \begin{align*}
        d_\defo (\sigma, \tau) &\leq \sum_{i=0}^{k-1} d_\defo (\rho_i, \rho_{i+1}) \\
                                &\leq \sum_{i=0}^{k-1}  d_{\Dgm}^E  (\rho_i, \rho_{i+1}) \\
                                &= d_{\Dgm}^E (\sigma, \tau).
    \end{align*}    
We now show $d_{\Dgm}^E (\sigma, \tau) \leq d_\defo(\sigma, \tau)$ by adopting the interpolation argument in~\cite[Proposition 9.4]{mccleary2022edit}. Let $\gamma : \bar Q \times \bar R \to \NN$ be a matching between $\sigma$ and $\tau$. For every $t\in [0,1]$, let $A_t := \{ (1-t)I + tJ \mid \gamma (I,J) >0 \}$ and let $S_t := \{\text{endpoints of intervals in } A_t \}$. Note that since $\gamma([0,0],[0,0])>0$, we have that $0\in S_t$, i.e. $S_t$ is nonempty. For every $t\in[0,1]$, we define the function
\begin{align*}
    \nu_t : \overline{S_t} &\to \NN \\
                    K &\mapsto \sum_{\substack{I \in \bar Q, J \in \bar R \\ (1-t)I + tJ = K}} \gamma (I,J).
\end{align*}
Observe that $\nu_0 = \sigma$ and $\nu_1 = \tau$. We will now extract certain persistence diagram morphisms from the family $\{\nu_t \}_{t\in [0,1]}$ in order to obtain a path between $\sigma$ and $\tau$ in $\Dgm$. 

As $t$ varies from $0$ to $1$, there are only finitely many places where the combinatorial
structure of $S_t$ changes. To be precise, we say that a point $t\in [0,1]$ is critical if for all sufficiently small $\delta > 0$, there exists $s\in (t-\delta, t+\delta)$ with $|S_t|\neq |S_s|$. By~\cite[Lemma 9.6]{mccleary2022edit}, if $t\in[0,1]$ is not a critical point, then for any $K \in \supp(\nu_t)$ there is a unique pair of intervals $I \in \bar Q$ and $J \in \bar R$ with $\gamma(I,J)>0$ and $(1-t)I + tJ =K$. This uniqueness allows us to define an order-preserving map, $\alpha_{t,s}$, from $S_t$ to $S_s$, where $t$ is not a critical point and $s\in[0,1]$ is any point with no critical points strictly between $s$ and $t$, as follows. For any $b\in S_t$, there are unique intervals $I\in \bar Q$ and $J \in \bar R$ with $\gamma(I,J)>0$ and either $(1-t)I + tJ$ is equal to $[a,b]$ or $[b,c]$. If $(1-t)I + tJ =[a,b]$, then define $\alpha_{t,s}(b)$ to be the right endpoint of the intervals $(1-s)I + sJ$. Similarly, if $b$ is a left endpoint, then we define $\alpha_{t,s}(b)$ to be the left endpoint of $(1-s)I+sJ$. Finally, we define $\alpha_{s,t}(0) =0$. This map has a right adjoint given by $\beta_{t,s}(x) = \max (b \in S_t \mid \alpha_{t,s}(b) \leq x)$. Then, by~\cite[Lemma 9.8]{mccleary2022edit}, the Galois connection $(\alpha_{t,s}, \beta_{t,s})$ determines a persistence diagram morphism.

We now show that the cost of the persistence diagram morphism $(\alpha_{t,s}, \beta_{t,s})$ is upper bounded by $|t-s|\cdot \defcost(\gamma)$. By~\cref{prop: same dis on intervals}, we have that 
\[
\cost_{\Dgm}(\alpha_{t,s} : S_t \leftrightarrows S_s : \beta_{t,s}) = \dis (\alpha_{t,s}) = \max_{u,v\in S_t} |(u-v) - (\alpha_{t,s}(u) - \alpha_{t,s}(v))|.
\]
Let $u,v\in S_t$. There are four cases: $u$ and $v$ could be a left endpoint or a right endpoint of an interval in $A_t$. We will analyze only one case as all the cases are similar. Assume that $u$ is the left endpoint of the interval $(1-t)I_1 + t J_1$ with $\gamma(I_1, J_1)>0$ and $I_1 = [a_1,b_1]$, $J_1 = [c_1,d_1]$. Assume that $v$ is the right endpoint of $(1-t)I_2 + tJ_2$ with $\gamma(I_2, J_2)>0$ and $I_2 = [a_2,b_2]$, $J_2 = [c_2,d_2]$. Then, we have that $u = (1-t)a_1 + tc_1$ and $v = (1-t)b_2 + td_2$. Then,
\begin{align*}
    &|(u-v) - (\alpha_{t,s}(u) - \alpha_{t,s}(v))| \\
    &= \Big|\Big((1-t)a_1 +tc_1 - \big( (1-t)b_2 + td_2 \big) \Big) - \Big((1-s)a_1 +sc_1 - \big( (1-s)b_2 + sd_2 \big) \Big)\Big| \\
    &=\big| (s-t)(a_1-b_2) - (s-t)(c_1-d_2) \big| \\
    &= |s-t|\cdot \big| (a_1-b_2) - (c_1-d_2) \big| \\
    &\leq |s-t|\cdot \max 
    \begin{cases}
        &|(a_1-a_2) - (c_1-c_2)|, \\
        &|(a_1-b_2) - (c_1-d_2)|, \\
        &|(b_1-a_2) - (d_1-c_2)|, \\
        &|(b_1-b_2) - (d_1-d_2)|
    \end{cases} \\
    &= |s-t|\cdot \defo(I_1, I_2, J_1, J_2) \\
    &\leq |s-t| \max_{\substack{{I'}_1, {I'}_2 \in \bar Q \\ {J'}_1, {J'}_2 \in \bar R \\ \gamma({I'}_1, {J'}_1) >0 \\ \gamma({I'}_2, {J'}_2) > 0}} \defo ({I'}_1, {I'}_2, {J'}_1, {J'}_2) \\
    &\leq |s-t|\cdot \defcost(\gamma).
\end{align*}

Thus, we obtain
\[
\cost_{\Dgm}(\alpha_{t,s} : S_t \leftrightarrows S_s : \beta_{t,s})= \max_{u,v\in S_t} |(u-v) - (\alpha_{t,s}(u) - \alpha_{t,s}(v))| \leq |s-t|\cdot \defcost(\gamma)
\]

Now, let $\{ 0 = s_0 < \cdots < s_N =1 \} \subseteq [0,1]$ be the set of critical points and choose $\{ t_0 < \cdots <t_{N-1}\} \subseteq [0,1]$ with $0=s_0 < t_0 < s_1 <\cdots <t_{N-1} < s_N =1$. Then, the Galois connections $(\alpha_{t_i, s_i}, \beta_{t_i, s_i})$ and $(\alpha_{t_i, s_{i+1}}, \beta_{t_i, s_{i+1}})$ form a path $\mathcal{P}$ between $\sigma$ and $\tau$ in $\Dgm$ with 
\[
\cost_{\Dgm}(\mathcal{P}) \leq \sum_{i=0}^{N-1} ((t_i -s_i) + (s_{i+1}-t_i)) \cdot \defcost(\gamma) = \defcost(\gamma).
\]
Therefore, $d_{\Dgm}^E (\sigma, \tau) \leq d_\defo(\sigma, \tau)$.
\end{proof}

\subsection{Interpretation of the~\texorpdfstring{$p$}--Edit Distance Between Weighted Persistence Diagrams}

Analogously to the previous section, we now re-express the $p$-edit distance between weighted persistence diagrams in an OT-like fashion.

\begin{definition}[Weighted Matching]
    Let $w\textnormal{-}\sigma : (\bar Q, \mu_{\bar Q}) \to \NN$ and $w\textnormal{-}\tau : (\bar R, \mu_{\bar R}) \to \NN$ be two weighted persistence diagrams. A \emph{weighted matching} between $w\textnormal{-}\sigma$ and $w\textnormal{-}\tau$ is a pair $(\gamma, \eta)$ where $\gamma$ is a matching between $\sigma$ and $\tau$ and $\eta$ is a coupling between $\mu_{\bar Q}$ and $\mu_{\bar R}$ such that
    \[
    \gamma (I,J) > 0 \implies \eta(I,J) > 0.
    \]
\end{definition}

\begin{definition}[$p$-displacement cost of a matching / $p$-displacement distance]
    Let $w\textnormal{-}\sigma : (\bar Q,\mu_{\bar Q}) \to \NN$ and $w\textnormal{-}\tau : (\bar R, \mu_{\bar R}) \to \NN$ be weighted two persistence diagrams and let $(\gamma, \eta)$ be a weighted matching between them. For $p\in [1,\infty)$, the \emph{$p$-displacement cost} of $(\gamma, \eta)$ is defined to be
    \[
    \defcost_p (\gamma,\eta) := \left ( \sum_{\substack{I_1, I_2 \in \bar Q \\ J_1, J_2 \in \bar R}} \left( \defo (I_1, I_2, J_1, J_2)\right)^p \eta (I_1, J_1) \eta (I_2, J_2) \right)^\frac{1}{p}.
    \]
    For $p=\infty$, the \emph{$\infty$-displacement cost} of $(\gamma, \eta)$ is defined to be
    \[
    \defcost_p (\gamma,\eta) := \max_{\substack{\eta(I_1, J_1) >0 \\ \eta (I_2, J_2) > 0}} \defo(I_1, I_2, J_1, J_2)
    \]
    We define the \emph{$p$-displacement distance}, denoted $d_{\defo,p}$, between two weighted persistence diagrams $w\textnormal{-}\sigma$ and $w\textnormal{-}\tau$ as the infimum of all the $p$-displacement costs of weighted matchings between $w\textnormal{-}\sigma$ and $w\textnormal{-}\tau$. That is,
    \[
    d_{\defo,p} (w\textnormal{-}\sigma, w\textnormal{-}\tau) := \inf_{(\gamma,\eta)} \defcost_p (\gamma,\eta).
    \]
\end{definition}

\begin{prop}\label{prop: OT formulation of dEp}
    Let $w\textnormal{-}\sigma : (\bar Q,\mu_{\bar Q}) \to \NN$ and $w\textnormal{-}\tau : (\bar R,\mu_{\bar R}) \to \NN$ be two weighted persistence diagrams. Then, 
    \[
    d_{\wDgm}^{E,p} (w\textnormal{-}\sigma, w\textnormal{-}\tau) = d_{\defo,p} (w\textnormal{-}\sigma,w\textnormal{-}\tau)
    \]
\end{prop}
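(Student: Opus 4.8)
The plan is to mirror the two-sided strategy of the proof of \cref{prop: interpretation unweighted}, carrying the measures along at each step. One inequality will come from turning an optimal path in $\wDgm$ into a chain of weighted matchings, and the other from turning a single weighted matching into an explicit interpolating path of weighted diagrams. A preliminary ingredient, replacing the remark that $d_\defo$ is a metric, is that $d_{\defo,p}$ satisfies the triangle inequality: given weighted matchings $(\gamma_{12},\eta_{12})$ and $(\gamma_{23},\eta_{23})$ through an intermediate weighted diagram, I would glue $\eta_{12}$ and $\eta_{23}$ along their common marginal (the gluing lemma) and compose the underlying matchings; the termwise bound $\defo(I_1,I_2,K_1,K_2)\le\defo(I_1,I_2,J_1,J_2)+\defo(J_1,J_2,K_1,K_2)$, which follows from the triangle inequality for $|\cdot|$ applied to each of the four entries of $\defo$, combined with Minkowski's inequality, yields $\defcost_p(\gamma_{13},\eta_{13})\le\defcost_p(\gamma_{12},\eta_{12})+\defcost_p(\gamma_{23},\eta_{23})$.

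For $d_{\defo,p}(w\textnormal{-}\sigma,w\textnormal{-}\tau)\le d_{\wDgm}^{E,p}(w\textnormal{-}\sigma,w\textnormal{-}\tau)$, I would take an optimal path $w\textnormal{-}\rho_0=w\textnormal{-}\sigma\leftrightarrow\cdots\leftrightarrow w\textnormal{-}\rho_k=w\textnormal{-}\tau$ and, for each morphism $f_i:Q_i\leftrightarrows R_i:g_i$, build a weighted matching between consecutive diagrams. The matching $\gamma_i$ is the one extracted from $f_i$ in the unweighted setting (\cite[Lemma 9.10]{mccleary2022edit}), and the coupling is the one induced by the Monge map $\bar f_i$, namely $\eta_i:=(\mathrm{id},\bar f_i)_\#\mu_{\bar Q_i}$, supported on the graph of $\bar f_i$. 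Since $\gamma_i(I,J)>0$ forces $J=\bar f_i(I)$ and $\eta_i(I,\bar f_i(I))=\mu_{\bar Q_i}(I)>0$, the pair $(\gamma_i,\eta_i)$ is a weighted matching, and evaluating $\defcost_p$ against this graph-supported coupling collapses to $\defcost_p(\gamma_i,\eta_i)=\cost_{\wDgm}^p(f_i)$. Chaining with the triangle inequality above gives $d_{\defo,p}\le\sum_i\cost_{\wDgm}^p(f_i)=d_{\wDgm}^{E,p}(w\textnormal{-}\sigma,w\textnormal{-}\tau)$ after taking the infimum over paths.

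For the reverse inequality I would reuse the interpolation of \cref{prop: interpretation unweighted}, now feeding in the coupling. Given a weighted matching $(\gamma,\eta)$, write $\ell_t(I,J):=(1-t)I+tJ$ and equip the interpolating diagram $\nu_t$ with the weight $\mu_t:=(\ell_t)_\#\eta$; then $\mu_0=\mu_{\bar Q}$ and $\mu_1=\mu_{\bar R}$ because $\eta$ has these marginals, and $\nu_t(K)>0$ implies $\mu_t(K)>0$ since $\gamma(I,J)>0$ implies $\eta(I,J)>0$. Defining the breakpoint sets $S_t$ from the support of $\eta$ (which contains that of $\gamma$), the uniqueness lemma \cite[Lemma 9.6]{mccleary2022edit} still applies, the maps $\alpha_{t,s}$ are defined exactly as before and satisfy $\bar\alpha_{t,s}\circ\ell_t=\ell_s$ on $\supp(\mu_t)$, so $(\bar\alpha_{t,s})_\#\mu_t=\mu_s$ and each $\alpha_{t,s}$ is a genuine weighted persistence diagram morphism. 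The key computation is the pointwise scaling identity $\defo\big(\ell_t(I_1,J_1),\ell_t(I_2,J_2),\ell_s(I_1,J_1),\ell_s(I_2,J_2)\big)=|s-t|\cdot\defo(I_1,I_2,J_1,J_2)$, obtained by substituting the affine endpoints into each of the four entries of $\defo$ and factoring out $s-t$; integrating its $p$-th power against $\mu_t\otimes\mu_t$ (read as a maximum when $p=\infty$) yields $\cost_{\wDgm}^p(\alpha_{t,s})=|s-t|\cdot\defcost_p(\gamma,\eta)$. Summing over the segments between consecutive critical points telescopes to $\defcost_p(\gamma,\eta)$, and taking the infimum over weighted matchings gives $d_{\wDgm}^{E,p}\le d_{\defo,p}$.

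The main obstacle I anticipate is the bookkeeping arising from the fact that $\gamma$ and $\eta$ may have different supports: the integer-valued diagram $\nu_t$ is governed by $\supp(\gamma)$ while the weight $\mu_t$ is governed by the possibly larger $\supp(\eta)$, so the breakpoint set $S_t$, the notion of critical point, and the uniqueness lemma must all be set up relative to $\supp(\eta)$ while still producing valid persistence diagram morphisms for $\nu_t$. A secondary technical point is justifying the gluing construction behind the triangle inequality for $d_{\defo,p}$ at finite $p$; this is the Wasserstein-type ingredient that has no counterpart in the unweighted argument, and it is where the product-of-couplings structure of $\defcost_p$ must be handled with care.
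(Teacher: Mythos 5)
Your proposal is correct and follows essentially the same route as the paper's proof: one direction converts each weighted persistence diagram morphism along an optimal path into the graph-supported weighted matching $(\gamma_f,\eta_f)$ with $\eta_f=(\mathrm{id}_{\bar Q},\bar f)_\#\mu_{\bar Q}$, and the other runs the interpolation argument with the interpolated weight $\mu_t$ given by the pushforward of $\eta$ and the Galois connections $(\alpha_{t,s},\beta_{t,s})$ built over the support of $\eta$. The only substantive difference is that you make explicit the gluing-lemma triangle inequality for $d_{\defo,p}$ needed to chain matchings along a path, a step the paper leaves implicit.
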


\begin{proof}
We first show $d_{\defo,p} (w\textnormal{-}\sigma,w\textnormal{-}\tau) \leq d_{\wDgm}^{E,p} (w\textnormal{-}\sigma, w\textnormal{-}\tau)$. Let $f : Q \leftrightarrows R : g$ be a weighted persistence diagram morphism from $w\textnormal{-}\sigma$ to $w\textnormal{-}\tau$. Then, $f : Q \leftrightarrows R : g$ induces a weighted matching $(\gamma_f, \eta_f)$ as follows.
$$\gamma_f (I,J) := \begin{cases}
    \sigma (I) &\text{ if } \bar f(I) = J \\
    0 &\text{ otherwise.}
\end{cases}$$
And, the coupling between $\mu_{\bar Q}$ and $\mu_{\bar R}$ is given by $\left(id_{\bar Q}, \bar f\right)_\# (\mu_{\bar Q})$. This induced weighted matching $(\gamma_f, \eta_f)$ satisfies
\[
\defcost_p (\gamma_f,\eta_f) \leq \cost_{\wDgm}^p (f : Q \leftrightarrows R : g).
\]
Thus, we obtain $d_{\defo,p} (w\textnormal{-}\sigma,w\textnormal{-}\tau) \leq d_{\wDgm}^{E,p} (w\textnormal{-}\sigma, w\textnormal{-}\tau)$.

We now show $d_{\wDgm}^{E,p} (w\textnormal{-}\sigma, w\textnormal{-}\tau) \leq d_{\defo,p} (w\textnormal{-}\sigma,w\textnormal{-}\tau)$ by adopting the interpolation argument in~\cite[Proposition 9.4]{mccleary2022edit} and in the proof of~\cref{prop: interpretation unweighted}. Let $(\gamma, \eta)$ be a weighted matching between $w\textnormal{-}\sigma$ and $w\textnormal{-}\tau$. For every $t\in [0,1]$, let $\mathfrak{A}_t := \{ (1-t)I + tJ \mid \eta (I,J) >0 \}$ and let $\mathfrak{S}_t := \{\text{endpoints of intervals in } \mathfrak{A}_t \}$. Note that since $\gamma([0,0],[0,0])>0$, we have that $0\in \mathfrak{S}_t$, i.e., $\mathfrak{S}_t$ is nonempty. For every $t\in[0,1]$, we define

\begin{align*}
    \nu_t : \overline{\mathfrak{S}_t} &\to \NN \\
                    K &\mapsto \sum_{\substack{I \in \bar Q, J \in \bar R \\ (1-t)I + tJ = K}} \gamma (I,J),\\
    \mu_t : \overline{\mathfrak{S}_t} &\to [0,1] \\
                    K &\mapsto \sum_{\substack{I \in \bar Q, J \in \bar R \\ (1-t)I + tJ = K}} \eta (I,J).
\end{align*}
Observe that $ \nu_t : ( \overline{\mathfrak{S}_t}, \mu_t) \to \NN$ is a weighted persistence diagram. As in the proof of~\cref{prop: interpretation unweighted}, there are only finitely many places where the combinatorial
structure of $\mathfrak{S}_t$ changes as $t$ varies from 0 to 1. Similarly to the proof of~\cref{prop: interpretation unweighted}, we set critical points to be the values $t\in [0,1]$ such that for all sufficiently small $\delta > 0$, there exists $s \in (t-\delta, t+\delta)$ with $|\mathfrak{S}_t|\neq|\mathfrak{S}_s|$. As constructed in proof of~\cref{prop: interpretation unweighted}, we construct the Galois connections $\alpha_{t,s} : \mathfrak{S}_t \leftrightarrows \mathfrak{S}_s : \beta_{t,s}$ where $t$ is a non-critical point and $s$ is any point with no critical points strictly between $s$ and $t$. These Galois connections $(\alpha_{t,s}, \beta_{t,s})$ determine weighted persistence diagram morphisms from the weighted persistence diagram $\nu_t$ to $\nu_s$ with
\[
\cost_{\wDgm}^p (\alpha_{t,s} : \mathfrak{S}_t \leftrightarrows \mathfrak{S}_s : \beta_{t,s}) \leq |s-t|\cdot \defcost_p (\gamma,\eta).
\]

Therefore, we obtain a path between $w\textnormal{-}\sigma$ and $ w\textnormal{-}\tau$ whose $p$-cost is at most $\defcost_p (\gamma,\eta)$. Thus, we obtain $d_{\wDgm}^{E,p} (w\textnormal{-}\sigma, w\textnormal{-}\tau) \leq d_{\defo,p} (w\textnormal{-}\sigma,w\textnormal{-}\tau)$.
\end{proof}

\begin{corollary}
For two finite mm-spaces $(X,d_X,\mu_X)$ and $(Y,d_Y, \mu_Y)$, we have
    \[
    d_{\defo,p} \left(w\textnormal{-}\PD_d^{\VR(X)},w\textnormal{-}\PD_d^{\VR(Y)}\right) \leq 4^{\frac{p+1}{p}} \cdot \dgw{p} ((X,d_X, \mu_X), (Y,d_Y,\mu_Y)).
    \]
\end{corollary}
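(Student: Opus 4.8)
The plan is to obtain this corollary as an immediate consequence of two results already established in the excerpt, chained together by a single substitution. The statement concerns the $p$-displacement distance $d_{\defo,p}$ between the two weighted Vietoris--Rips persistence diagrams $w\textnormal{-}\PD_d^{\VR(X)}$ and $w\textnormal{-}\PD_d^{\VR(Y)}$, and the bound we want is exactly the right-hand side appearing in the main stability theorem. So the strategy is to first replace $d_{\defo,p}$ by the categorical $p$-edit distance $d_{\wDgm}^{E,p}$, and then invoke the stability bound for that distance.

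First I would apply \cref{prop: OT formulation of dEp}, which asserts that for any two weighted persistence diagrams the $p$-displacement distance and the $p$-edit distance coincide. Since $w\textnormal{-}\PD_d^{\VR(X)}$ and $w\textnormal{-}\PD_d^{\VR(Y)}$ are, by construction, weighted persistence diagrams (their weights being the pushforwards $\mu_{\bar Q, \flip}$ built from the global distributions of distances, per the paragraph on weighted persistence diagrams of weighted filtrations), the proposition applies verbatim and gives the identity
\[
d_{\defo,p}\!\left(w\textnormal{-}\PD_d^{\VR(X)}, w\textnormal{-}\PD_d^{\VR(Y)}\right) = d_{\wDgm}^{E,p}\!\left(w\textnormal{-}\PD_d^{\VR(X)}, w\textnormal{-}\PD_d^{\VR(Y)}\right).
\]

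Then I would apply \cref{thm: p stability of weighted persistence diagrams main thm} directly to the right-hand side, which yields
\[
d_{\wDgm}^{E,p}\!\left(w\textnormal{-}\PD_d^{\VR(X)}, w\textnormal{-}\PD_d^{\VR(Y)}\right) \leq 4^{\frac{p+1}{p}} \cdot \dgw{p}\!\left((X,d_X,\mu_X),(Y,d_Y,\mu_Y)\right),
\]
and combining the two displays completes the argument. There is essentially no obstacle here: the corollary is a formal corollary, and the only point requiring any care is the bookkeeping check that the objects $w\textnormal{-}\PD_d^{\VR(X)}$ and $w\textnormal{-}\PD_d^{\VR(Y)}$ are genuinely the weighted persistence diagrams to which both \cref{prop: OT formulation of dEp} and \cref{thm: p stability of weighted persistence diagrams main thm} refer, so that the substitution is legitimate. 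All the real analytic work—the factor $4^{\frac{p+1}{p}}$, the functoriality of $\WVR$ and $w\textnormal{-}\PD_d$, and the OT-like reformulation—has already been carried out in the cited results.
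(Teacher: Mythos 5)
Your proof is correct and is exactly the argument the paper intends: the corollary is stated without proof immediately after \cref{prop: OT formulation of dEp} precisely because it follows by substituting that identity into the bound of \cref{thm: p stability of weighted persistence diagrams main thm}. Nothing further is needed.
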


\section{Discriminating Power of Weighted Persistence Diagrams}\label{sec: discriminating power}

Weighed persistence diagrams blend two notions: classical persistence diagrams and global distributions of distances. In this section, we show that  weighted persistence diagrams have stronger discriminating power than both  classical persistence diagrams and global distributions of distances. We will investigate two cases separately: $p=\infty$ and $p \in [1,\infty)$. 

\subsection{Case \texorpdfstring{$p=\infty$}{}}

We focus on the case $p=\infty$ separately. We first prove that  classical Vietoris-Rips persistence diagrams  are $\infty$-Gromov-Wasserstein stable.

\begin{prop}\label{prop: infinity gw stability of pd}
    Let $(X,d_X,\mu_X)$ and $(Y,d_Y,\mu_Y)$ be two finite mm-spaces. Then,
    \[
    d_\Dgm^E \left (\PD_d^{\VR(X)}, \PD_d^{\VR(Y)} \right) \leq 4\cdot \dgw{\infty} ((X,d_X, \mu_X), (Y,d_Y,\mu_Y)).
    \]
\end{prop}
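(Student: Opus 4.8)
\section*{Proof proposal}

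The plan is to reduce the statement to the already-established \emph{unweighted} stability theorem (\cref{thm: classical edit distance stability of pd}) together with a single comparison between the two shape distances $\dgh$ and $\dgw{\infty}$. Indeed, \cref{thm: classical edit distance stability of pd} already provides
\[
d_\Dgm^E\left(\PD_d^{\VR(X)}, \PD_d^{\VR(Y)}\right) \leq 4\cdot \dgh((X,d_X),(Y,d_Y)),
\]
so it suffices to prove the single inequality $\dgh(X,Y) \leq \dgw{\infty}(X,Y)$ and chain the two bounds. The factor $4$ is then inherited unchanged.

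To establish $\dgh \leq \dgw{\infty}$, I would begin from an arbitrary coupling $\mu \in \cpl(\mu_X, \mu_Y)$ and manufacture a tripod from its support. Set $S := \supp(\mu) \subseteq X \times Y$ and let $\pi_X, \pi_Y$ denote the restrictions to $S$ of the two coordinate projections. First I would check that these are surjective: since $\mu_X$ has full support, for each $x \in X$ we have $\mu_X(x) = \sum_{y} \mu(x,y) > 0$, so some $(x,y)$ lies in $S$, and symmetrically for $\pi_Y$. Hence
\[
\mathfrak{R}_\mu : X \xtwoheadleftarrow{\pi_X} S \xtwoheadrightarrow{\pi_Y} Y
\]
is a genuine tripod between $X$ and $Y$. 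Its distortion is, directly from the definition,
\[
\dis(\mathfrak{R}_\mu) = \max_{(x,y),(x',y') \in S} \big| d_X(x,x') - d_Y(y,y') \big| = \dis_\infty(\mu),
\]
the last equality being precisely the definition of the $\infty$-distortion of $\mu$ (using $S = \supp(\mu)$).

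Since every coupling thus produces a tripod of equal distortion, taking infima over the respective families gives
\[
\dgh(X,Y) = \frac{1}{2}\inf_{\mathfrak{R}} \dis(\mathfrak{R}) \leq \frac{1}{2}\inf_{\mu \in \cpl(\mu_X,\mu_Y)} \dis_\infty(\mu) = \dgw{\infty}(X,Y),
\]
and combining this with \cref{thm: classical edit distance stability of pd} yields the claim. There is no serious obstacle in this argument: the only point requiring care is the surjectivity of the two projections out of $\supp(\mu)$, which is exactly where the full-support assumption on $\mu_X$ and $\mu_Y$—built into the very definition of a finite mm-space—is used. Everything else is a matching of definitions between the $\infty$-distortion of a coupling and the distortion of its induced tripod.
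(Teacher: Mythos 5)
Your proposal is correct and follows essentially the same route as the paper: reduce to the unweighted stability bound $d_\Dgm^E\left(\PD_d^{\VR(X)},\PD_d^{\VR(Y)}\right) \leq 4\cdot\dgh(X,Y)$ and then apply the comparison $\dgh \leq \dgw{\infty}$. The only difference is that the paper simply cites this comparison (Theorem 5.2(b) of M\'emoli's Gromov--Wasserstein paper), whereas you prove it from scratch by turning the support of a coupling into a tripod of equal distortion --- a correct, self-contained verification of the cited fact.
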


\begin{proof}
    By~\cref{prop: unweighted vr is functor} and~\cref{prop: functoriality of pd}, we have that 
    \[
    d_\Dgm^E \left (\PD_d^{\VR(X)}, \PD_d^{\VR(Y)} \right) \leq 4\cdot \dgh ((X,d_X), (Y,d_Y)).
    \]
    Combining this inequality with the fact,~\cite[Theorem 5.2 (b)]{facundo-gw}, that $$\dgh ((X,d_X), (Y,d_Y)) \leq \dgw{\infty} ((X,d_X,\mu_X), (Y,d_Y,\mu_Y)),$$ we obtain
    \[
    d_\Dgm^E \left (\PD_d^{\VR(X)}, \PD_d^{\VR(Y)} \right) \leq 4\cdot \dgw{\infty} ((X,d_X, \mu_X), (Y,d_Y,\mu_Y)).
    \]
\end{proof}

\begin{remark}
    Notice that the inequality $$d_\Dgm^E \left (\PD_d^{\VR(X)}, \PD_d^{\VR(Y)} \right) \leq 4\cdot \dgh ((X,d_X), (Y,d_Y))$$ in the proof above can also be obtained from the the following facts
    \begin{enumerate}
        \item The edit distance is bi-Lipschitz equivalent to bottleneck distance,~\cite[Theorem 9.1]{mccleary2022edit}.
        \[
        d_B \left(\PD_d^{\VR(X)}, \PD_d^{\VR(Y)}\right) \leq d_\Dgm^E \left (\PD_d^{\VR(X)}, \PD_d^{\VR(Y)} \right) \leq 2\cdot d_B \left(\PD_d^{\VR(X)}, \PD_d^{\VR(Y)}\right)
        \]
        \item Persistence diagrams are Gromov-Hausdorff stable,~\cite[Theorem 3.1]{chazal2009}.
        \[
        d_B \left(\PD_d^{\VR(X)}, \PD_d^{\VR(Y)}\right) \leq 2 \cdot \dgh(X,Y).
        \]
    \end{enumerate}
\end{remark}

We now describe how~\cref{thm: p stability of weighted persistence diagrams  main thm}, permits improving upon the $\infty$-Gromov-Wasserstein stability of persistence diagrams given in~\cref{prop: infinity gw stability of pd}.

\begin{prop}\label{prop: p infty improved stability}
    Let $(X,d_X,\mu_X)$ and $(Y,d_Y,\mu_Y)$ be two finite mm-spaces. Then,
    \begin{align*}
        d_\Dgm^E \left (\PD_d^{\VR(X)}, \PD_d^{\VR(Y)} \right) &\leq 
        d_{\wDgm}^{E,\infty} \left(w\textnormal{-}\PD_d^{\VR(X)}, w\textnormal{-}\PD_d^{\VR(Y)}\right) \\ &\leq 4 \cdot \dgw{\infty} ((X,d_X,\mu_X)), (Y,d_Y,\mu_Y)).
    \end{align*}
    
\end{prop}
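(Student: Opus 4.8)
The plan is to prove the two inequalities separately. The right-hand inequality is nothing more than \cref{thm: p stability of weighted persistence diagrams  main thm} evaluated at $p=\infty$: there the bound reads $d_{\wDgm}^{E,p}\leq 2\cdot 4^{1/p}\cdot d_{\cmm}^{E,p}=4^{(p+1)/p}\cdot\dgw{p}$, and sending $p\to\infty$ gives $4^{1/p}\to 1$ and $4^{(p+1)/p}\to 4^1=4$. So the second inequality requires no new argument beyond specializing the main theorem (or, equivalently, the $p=\infty$ case of \cref{prop: weighted vr is functor} together with \cref{prop: weighted pd are functorial}).

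The content lies in the left-hand inequality $d_\Dgm^E(\PD_d^{\VR(X)},\PD_d^{\VR(Y)})\leq d_{\wDgm}^{E,\infty}(w\textnormal{-}\PD_d^{\VR(X)},w\textnormal{-}\PD_d^{\VR(Y)})$. My approach is to exhibit a cost-nonincreasing forgetful functor $U:\wDgm\to\Dgm$ and invoke \cref{prop: general edit distance stability} with $M=1$. On objects, $U$ discards the weight, sending $w\textnormal{-}\sigma:(\bar Q,\mu_{\bar Q})\to\NN$ to $\sigma:\bar Q\to\NN$; in particular $U(w\textnormal{-}\PD_d^{\VR(X)})=\PD_d^{\VR(X)}$, since endowing the Vietoris--Rips filtration with weights does not change its underlying filtration and hence does not change its degree-$d$ diagram. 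On morphisms, $U$ sends a weighted persistence diagram morphism $f:Q\leftrightarrows R:g$ (which by definition is already a persistence diagram morphism carrying the extra Monge condition on $\bar f$) to the same Galois connection regarded merely as a persistence diagram morphism. Preservation of identities and composition is immediate, because forgetting the Monge condition commutes with composing Galois connections.

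The key step — and the only point that must actually be checked — is that $U$ does not increase cost. Here I would invoke \cref{prop: meaning of dpl}: for an order-preserving $f$ one has $\dis(f)=\dis(\bar f)=\max_{I_1,I_2\in\bar Q}\defo(I_1,I_2,\bar f(I_1),\bar f(I_2))$. The unweighted cost is $\cost_\Dgm(f)=\dis(\bar f)$, whereas the $\infty$-weighted cost is $\cost_{\wDgm}^\infty(f)=\max_{I_1,I_2\in\bar Q}\defo(I_1,I_2,\bar f(I_1),\bar f(I_2))$; by \cref{prop: meaning of dpl} these two expressions coincide. Thus $\cost_\Dgm(U(f))=\cost_{\wDgm}^\infty(f)$, so $U$ preserves cost exactly, and \cref{prop: general edit distance stability} applies with $M=1$ to yield the left inequality. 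There is no genuine obstacle: the argument is a clean instance of ``a cost-preserving forgetful functor contracts edit distances,'' and the real work is already done in \cref{prop: meaning of dpl}. The one thing worth emphasizing is that the two cost functions agree as an \emph{equality} rather than merely an inequality, which is precisely why the leading constant in the left inequality is $1$ and why the $\infty$-edit distance on weighted diagrams is a bona fide refinement of the unweighted edit distance.
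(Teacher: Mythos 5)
Your proposal is correct and follows essentially the same route as the paper: the right-hand inequality is the $p=\infty$ case of \cref{thm: p stability of weighted persistence diagrams main thm}, and the left-hand inequality comes from the forgetful functor $\wDgm\to\Dgm$ together with \cref{prop: general edit distance stability} applied with $M=1$. The only (harmless) difference is that you observe via \cref{prop: meaning of dpl} that the two costs agree exactly, whereas the paper only records the inequality $\cost_{\Dgm}(f)\leq\cost_{\wDgm}^{\infty}(f)$, which is all that is needed.
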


\begin{proof}
    The assignment 
    \begin{align*}
        \wDgm &\to \Dgm \\
         w\textnormal{-}\sigma &\mapsto \sigma
    \end{align*}
    is a functor. Moreover, for a morphism $f: P\leftrightarrows Q : g$ between two objects $w\textnormal{-}\sigma : (\bar Q, \mu_{\bar Q}) \to \ZZ$ and $w\textnormal{-}\tau : (\bar R, \mu_{\bar R}) \to \ZZ$ in $\wDgm$, we have that 
    \[
    \cost_{\Dgm} (f: P\leftrightarrows Q : g) \leq \cost_{\wDgm}^\infty (f: P\leftrightarrows Q : g).
    \]
    Thus, by~\cref{prop: general edit distance stability}, we have that 
    \[
        d_\Dgm^E \left (\sigma, \tau \right) \leq 
        d_{\wDgm}^{E,\infty} \left(w\textnormal{-}\sigma, w\textnormal{-}\tau\right)
    \]
    for any $w\textnormal{-}\sigma $ and $ w\textnormal{-}\tau$ in $\wDgm$.
\end{proof}

\begin{remark}
    Note that in~\cref{prop: p infty improved stability}, we use the edit distance to compare persistence diagrams, even though the bottleneck distance is more commonly employed for such comparisons. However, these two distances are shown to be bi-Lipschitz equivalent~\cite[Theorem 9.1]{mccleary2022edit}, implying that they offer the same discriminative power. Crucially, the edit distance is structurally compatible with the $\infty$-edit distance used to compare weighted persistence diagrams, as both are defined via cost functions on morphisms within their respective categories. This structural alignment makes the edit distance particularly well-suited for our goal of comparing the discriminative power of weighted versus unweighted persistence diagrams.
\end{remark}

We now present an example in which $d_\Dgm^E$ (equivalently, the bottleneck distance $d_B$) fails to distinguish two mm-spaces based on their persistence diagrams, whereas the weighted bottleneck distance $d_{\wDgm}^{E,\infty}$ succeeds in capturing the difference between their weighted persistence diagrams, thereby distinguishing the mm-spaces.

\begin{ex}\label{ex: same pd-1 different wpd-1}
    In~\cref{fig: same pd different wpd}, we illustrate two mm-spaces $(X,d_X,\mu_X)$ and $(Y,d_Y,\mu_Y)$. The set $X$ consists of vertices of a regular hexagon with diameter $2$, $d_X$ is the restriction of the Euclidean distance, and $\mu_X$ is the uniform measure. The set $Y$ contains the set $X$ and also contains a single middle point of two consecutive vertices in $X$, $d_Y$ is the restriction of the Euclidean distance, and $\mu_Y$ is the uniform measure. In degree-$1$, the persistence diagrams of the Vietoris-Rips filtrations of $(X,d_X)$ and $(Y,d_Y)$ are the same, see~\cref{fig: same pd 1}. Thus, 
    \[
    d_\Dgm^E \left (\PD_1^{\VR(X)}, \PD_1^{\VR(Y)} \right) = 0.
    \]
    On the other hand, we have
    \[
    d_{\wDgm}^{E,\infty} \left(w\textnormal{-}\PD_1^{\VR(X)}, w\textnormal{-}\PD_1^{\VR(Y)}\right) >0
    \]
    since the weights on $w\textnormal{-}\PD_1^{\VR(X)}$ and $w\textnormal{-}\PD_1^{\VR(Y)}$ have different supports. 
\end{ex}

\begin{figure}
    \centering
    \subfloat[\centering $(X,d_X,\mu_X)$]{{\includegraphics[width=7cm]{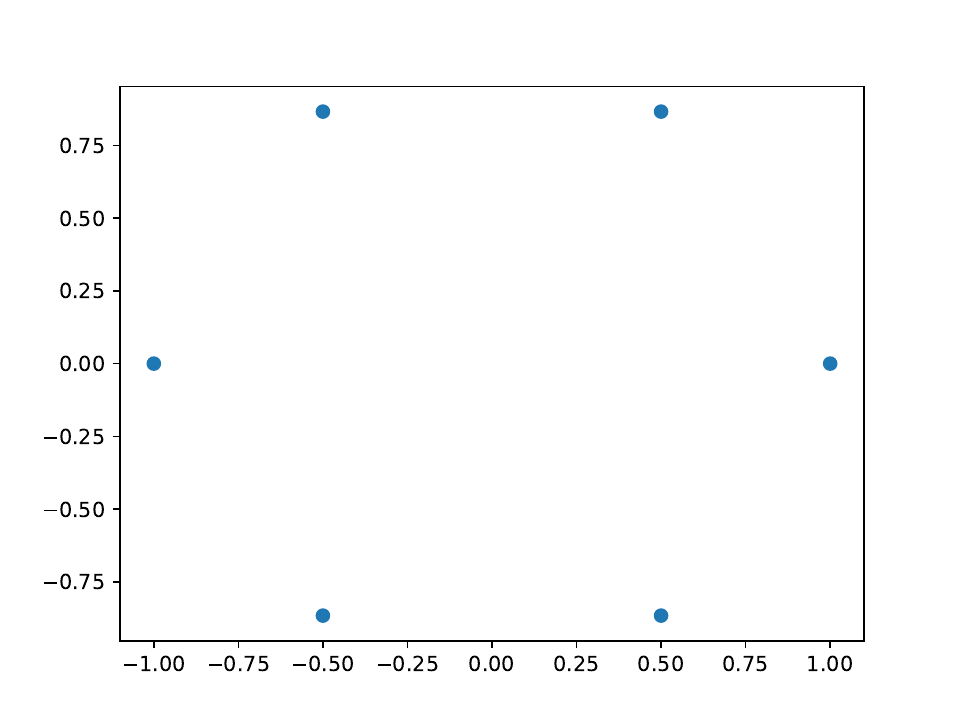} }}%
    \qquad
    \subfloat[\centering $(Y,d_Y,\mu_Y)$]{{\includegraphics[width=7cm]{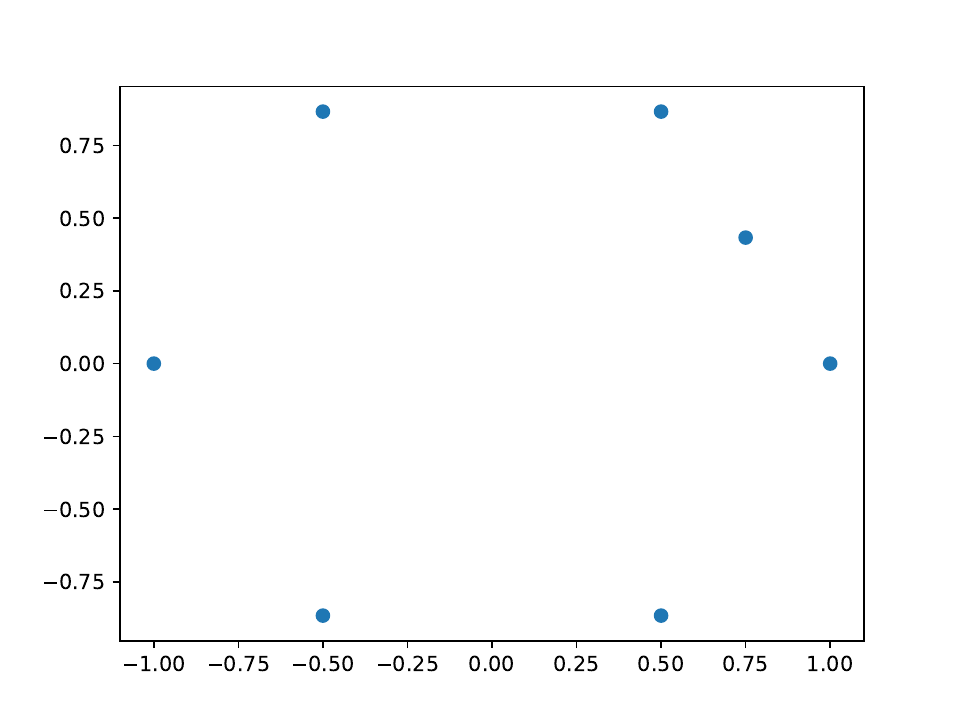} }}%
    \caption{Two mm-spaces with the same degree-1 persistence diagrams but with different weighted persistence diagrams.}%
    \label{fig: same pd different wpd}%
\end{figure}

\begin{figure}
    \centering
    \includegraphics{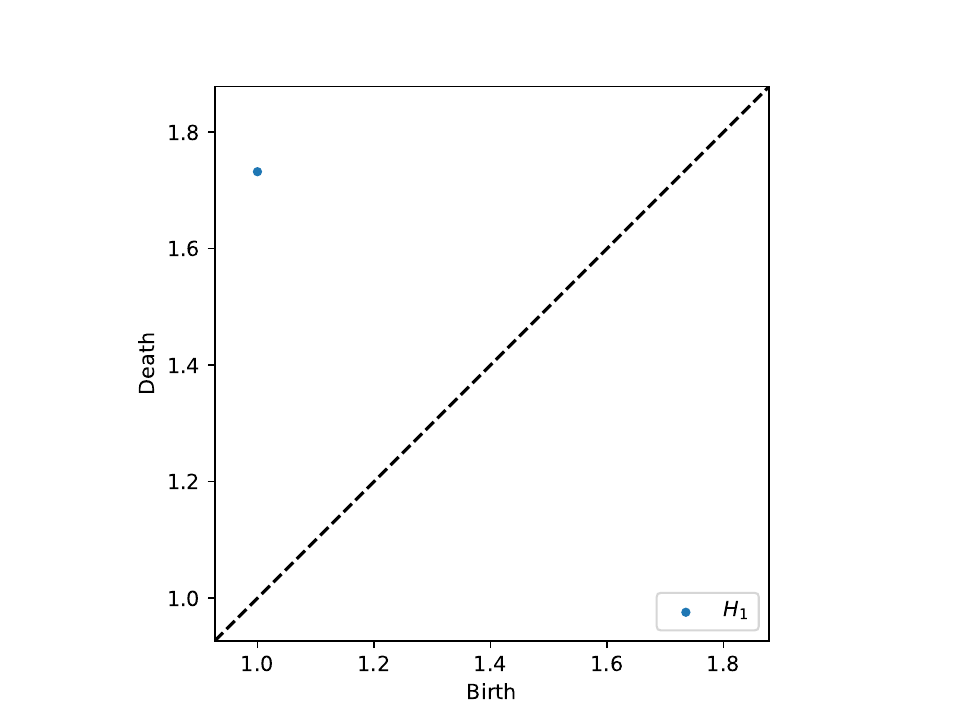}
    \caption{Degree-1 persistence diagrams of the Vietoris-Rips filtrations of $(X,d_X)$ and $(Y,d_Y)$ described in~\cref{ex: same pd-1 different wpd-1}.}
    \label{fig: same pd 1}
\end{figure}

We now pay special attention to the other component of the weighted persistence diagrams: the global distribution of distances. Let $(X,d_X,\mu_X)$ be a finite mm-space and let $\mu_{GDD(X)} = (d_X)_\# (\mu_X \otimes \mu_X)$ be the global distribution of distances of $X$ as in~\cref{defn: global dist of dist}. When constructing a weighted persistence diagram from a finite mm-space $(X,d_X,\mu_X)$, we record a probability measure on the set of intervals of $D_X = \ima(d_X)$. This probability measure is given by $\mu_{\overline{D_X}} := \flip_\# \left(\mu_{GDD(X)} \otimes \mu_{GDD(X)}\right)$ and called the \emph{flipped global distribution of distances}. Recall that $\flip : D_X \times D_X \to \overline{D_X}$ is the function described in~\cref{paragraph: w pf of w filtration} and given by

\begin{equation*}
    \flip_{D_X}(d_1, d_2) :=
    \begin{cases}
        [d_1, d_2] & \text{ if } d_1\leq d_2 \\
        [d_2, d_1] & \text{ if } d_2 < d_1.
    \end{cases}
\end{equation*}

We now present that flipped global distributions of distances are $\infty$-Gromov-Wasserstein stable.

\begin{prop}\label{prop: gw infinity stability of flipped}
    Let $(X,d_X,\mu_X)$ and $(Y,d_Y,\mu_Y)$ be two finite mm-spaces. Then,
    \[
    \dgw{\infty} \left ( \left(\overline{D_X}, ||\cdot||_\infty, \mu_{\overline{D_X}}\right), \left( \overline{D_Y}, ||\cdot||_\infty, \mu_{\overline{D_Y}} \right)\right) \leq 4 \cdot \dgw{\infty} \left ( (X,d_X,\mu_X), (Y,d_Y,\mu_Y) \right).
    \]
    
\end{prop}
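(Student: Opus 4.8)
The plan is to transport an optimal $\infty$-coupling realizing $\dgw{\infty}(X,Y)$ into a coupling between the two interval spaces and bound its $\infty$-distortion directly. The first step is to observe that the flipped global distribution of distances is a fourfold pushforward of the base measure. Define
\[
\Phi_X : X^4 \to \overline{D_X}, \qquad (x_1,x_2,x_3,x_4) \mapsto \flip\big(d_X(x_1,x_2),\, d_X(x_3,x_4)\big),
\]
and $\Phi_Y$ analogously. Unwinding $\mu_{GDD(X)} = (d_X)_\#(\mu_X\otimes\mu_X)$ together with the definition of the product measure shows $\mu_{\overline{D_X}} = (\Phi_X)_\#\,\mu_X^{\otimes 4}$, and likewise $\mu_{\overline{D_Y}} = (\Phi_Y)_\#\,\mu_Y^{\otimes 4}$.

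Next, let $\mu \in \cpl(\mu_X,\mu_Y)$ be an optimal coupling, so $\dis_\infty(\mu) = 2\,\dgw{\infty}(X,Y)$. Form the product coupling $\mu^{\otimes 4}$ on $(X\times Y)^4 \cong X^4\times Y^4$ and push it forward along $\Psi := (\Phi_X,\Phi_Y)$ to get $\nu := \Psi_\#\,\mu^{\otimes 4}$ on $\overline{D_X}\times\overline{D_Y}$. Since the $X^4$- and $Y^4$-marginals of $\mu^{\otimes 4}$ are $\mu_X^{\otimes 4}$ and $\mu_Y^{\otimes 4}$, the identity above shows $\nu \in \cpl(\mu_{\overline{D_X}}, \mu_{\overline{D_Y}})$, so $\nu$ is an admissible competitor for $\dgw{\infty}(\overline{D_X},\overline{D_Y})$.

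The heart of the argument is estimating $\dis_\infty(\nu)$. Any point of $\supp(\nu)$ comes from a quadruple $\{(x_i,y_i)\}_{i=1}^4 \subseteq \supp(\mu)$, producing $I = \flip(d_X(x_1,x_2),d_X(x_3,x_4)) = [I^-,I^+]$ and $J = \flip(d_Y(y_1,y_2),d_Y(y_3,y_4)) = [J^-,J^+]$. Because each pair lies in $\supp(\mu)$, the coupling gives $|d_X(x_1,x_2)-d_Y(y_1,y_2)| \le \dis_\infty(\mu)$ and $|d_X(x_3,x_4)-d_Y(y_3,y_4)| \le \dis_\infty(\mu)$. The key elementary fact is that $\min$ and $\max$ are $1$-Lipschitz for $\ell^\infty$, i.e. $|\min(a,b)-\min(c,e)| \le \max(|a-c|,|b-e|)$ and the same for $\max$; this yields $|I^--J^-|\le \dis_\infty(\mu)$ and $|I^+-J^+|\le\dis_\infty(\mu)$. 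Doing the same for a second point $(I',J')$ and applying the reverse triangle inequality in each endpoint gives $\big||I^\pm-I'^\pm|-|J^\pm-J'^\pm|\big| \le 2\,\dis_\infty(\mu)$, whence, using $1$-Lipschitzness of $\max$ once more,
\[
\big| d_{\overline{D_X}}(I,I') - d_{\overline{D_Y}}(J,J')\big| = \big|\max(|I^--I'^-|,|I^+-I'^+|) - \max(|J^--J'^-|,|J^+-J'^+|)\big| \le 2\,\dis_\infty(\mu).
\]
Therefore $\dis_\infty(\nu)\le 2\,\dis_\infty(\mu)$, and
\[
\dgw{\infty}(\overline{D_X},\overline{D_Y}) \le \tfrac12\,\dis_\infty(\nu) \le \dis_\infty(\mu) = 2\,\dgw{\infty}(X,Y) \le 4\,\dgw{\infty}(X,Y),
\]
which is the claim (in fact with the sharper constant $2$).

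The main obstacle is precisely this distortion estimate: one must control the interaction between $\flip$ (reordering into $[\min,\max]$) and the $\ell^\infty$ interval metric, which is exactly where the $\min/\max$ $1$-Lipschitz lemma enters; the pushforward identity and the marginal check are routine bookkeeping. An alternative route, closer to the surrounding exposition, is to note that for any weighted matching $(\gamma,\eta)$ the coupling $\eta$ satisfies $\dis_\infty(\eta)\le\defcost_\infty(\gamma,\eta)$ (since $|d_{\overline{D_X}}(I_1,I_2)-d_{\overline{D_Y}}(J_1,J_2)| \le \defo(I_1,I_2,J_1,J_2)$), and then to invoke the corollary $d_{\defo,\infty}\big(w\textnormal{-}\PD_d^{\VR(X)},w\textnormal{-}\PD_d^{\VR(Y)}\big)\le 4\cdot\dgw{\infty}(X,Y)$; this also yields the stated bound.
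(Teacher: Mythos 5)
Your proof is correct, but it takes a genuinely different route from the paper's. The paper argues in two stages: it first invokes the Gromov--Wasserstein stability of the (unflipped) global distribution of distances from M\'emoli's work (\cite[Theorem 5.1(c) and Proposition 6.2]{facundo-gw}), obtaining $\dgw{\infty}(D_X,D_Y)\leq 2\dgw{\infty}(X,Y)$, and then lifts an optimal coupling $\nu$ of the distance distributions to a coupling $\bar\nu := (\flip_{D_X}\times\flip_{D_Y})_\#(\nu\otimes\nu)$ of the interval spaces, bounding $\dis_\infty(\bar\nu)\leq 2\dis_\infty(\nu)$. You instead bypass the distance distributions entirely: starting from an optimal coupling $\mu$ of the mm-spaces themselves, you push $\mu^{\otimes 4}$ forward along $(\Phi_X,\Phi_Y)$ and control the distortion in a single pass using the $1$-Lipschitzness of $\min$ and $\max$ in the $\ell^\infty$ sense. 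Your marginal bookkeeping and the endpoint estimates $|I^{\pm}-J^{\pm}|\leq\dis_\infty(\mu)$ are all correct (the support of the pushforward of a finitely supported measure is contained in the image of the support, so every point of $\supp(\nu)$ does arise from a quadruple in $\supp(\mu)$). What your approach buys is self-containedness --- no appeal to the external stability result --- and the sharper constant $2$ in place of the stated $4$; what the paper's approach buys is a clean factorization through the distance-distribution stability, which it reuses elsewhere. Your closing alternative route via $\dis_\infty(\eta)\leq\defcost_\infty(\gamma,\eta)$ is also sound, and is essentially the argument the paper itself gives later in \cref{prop: p infty gdd improved stability}, which subsumes this proposition.
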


\begin{proof}
    By~\cite[Theorem 5.1 (c) and Proposition 6.2]{facundo-gw}, we have that
    \[
    2\cdot \dgw{\infty}\left( \left(D_X, |\cdot|, \mu_{GDD(X)}\right), \left(D_Y, |\cdot|, \mu_{GDD(Y)}\right) \right) \leq 4 \cdot \dgw{\infty} \left ( (X,d_X,\mu_X), (Y,d_Y,\mu_Y) \right).
    \]
    Observe that a coupling $\nu$ between $\mu_{GDD(X)}$ and $\mu_{GDD(Y)}$ determines a coupling $$\overline \nu := \left(\flip_{D_X} \times \flip_{D_Y}\right)_\# (\nu \otimes \nu)$$ between $\mu_{\overline{D_X}}$ and $\mu_{\overline{D_Y}}$, where $\left(\flip_{D_X} \times \flip_{D_Y}\right) : D_X \times D_Y \times D_X \times D_Y \to \overline{D_X} \times \overline{D_Y}$ is the function whose restriction to $D_X\times D_X$ is $\flip_{D_X}$ and whose restriction to $D_Y\times D_Y$ is $\flip_{D_Y}$. The following computation shows that $\dis_{\infty} (\bar \nu) \leq 2\cdot \dis_{\infty} (\nu)$.
    \begin{align*}
        \dis_{\infty} (\bar \nu) &= \max_{\substack{\bar \nu (I_1, J_1) >0 \\ \bar \nu (I_2, J_2) >0}} \big | ||I_1 - I_2||_\infty - ||J_1 - J_2||_\infty \big | \\
        &= \max_{\substack{\nu \otimes \nu (r_1, s_1, r_2, s_2) >0 \\ \nu \otimes \nu (r_3, s_3, r_4, s_4) >0}} \big | \max(|r_1 - r_3|, |r_2 - r_4|) - \max (|s_1 - s_3|-|s_2 - s_4|) \big | \\
        &\leq \max_{\substack{\nu \otimes \nu (r_1, s_1, r_2, s_2) >0 \\ \nu \otimes \nu (r_3, s_3, r_4, s_4) >0}} \max \big(|r_1 - r_3|-|s_1-s_3|, |r_2-r_4|-|s_2-s_4|\big). \\
        &\leq 2\cdot \max_{\substack{\nu(r,s) >0 \\ \nu(t,u)>0}} \big | |r-t| - |s-u| \big | \\
        &= 2\cdot \dis_{\infty} (\nu).
    \end{align*}
    Hence, we obtain that 
    \begin{gather*}
        \dgw{\infty} \left ( \left(\overline{D_X}, ||\cdot||_\infty, \mu_{\overline{D_X}}\right), \left( \overline{D_Y}, ||\cdot||_\infty, \mu_{\overline{D_Y}} \right)\right) \\ \leq \\ 2\cdot \dgw{\infty}\left( \left(D_X, |\cdot|, \mu_{GDD(X)}\right), \left(D_Y, |\cdot|, \mu_{GDD(Y)}\right) \right) \\ \leq \\ 4 \cdot \dgw{\infty} \left ( (X,d_X,\mu_X), (Y,d_Y,\mu_Y) \right).
    \end{gather*}
    
\end{proof}

Recall that our result,~\cref{thm: p stability of weighted persistence diagrams  main thm}, states that
\[
d_{\wDgm}^{E,\infty} \left(w\textnormal{-}\PD_d^{\VR(X)}, w\textnormal{-}\PD_d^{\VR(Y)}\right) \leq 4 \cdot \dgw{\infty} ((X,d_X,\mu_X)), (Y,d_Y,\mu_Y)).
\]

We now present that our result,~\cref{thm: p stability of weighted persistence diagrams  main thm}, improves upon the $\infty$-Gromov-Wasserstein stability of flipped global distributions of distances,~\cref{prop: gw infinity stability of flipped}.

\begin{prop}\label{prop: p infty gdd improved stability}
    Let $(X,d_X,\mu_X)$ and $(Y,d_Y,\mu_Y)$ be two finite mm-spaces. Then,
    \begin{align*}
        \dgw{\infty} \left ( \left(\overline{D_X}, ||\cdot||_\infty, \mu_{\overline{D_X}}\right), \left( \overline{D_Y}, ||\cdot||_\infty, \mu_{\overline{D_Y}} \right)\right) &\leq 
        d_{\wDgm}^{E,\infty} \left(w\textnormal{-}\PD_d^{\VR(X)}, w\textnormal{-}\PD_d^{\VR(Y)}\right) \\ &\leq 4 \cdot \dgw{\infty} ((X,d_X,\mu_X)), (Y,d_Y,\mu_Y)).
    \end{align*}
    
\end{prop}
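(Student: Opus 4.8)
The plan is to treat the two inequalities separately. The upper bound
\[
d_{\wDgm}^{E,\infty}\left(w\textnormal{-}\PD_d^{\VR(X)}, w\textnormal{-}\PD_d^{\VR(Y)}\right) \leq 4\cdot \dgw{\infty}((X,d_X,\mu_X),(Y,d_Y,\mu_Y))
\]
is exactly \cref{thm: p stability of weighted persistence diagrams  main thm} specialized to $p=\infty$, so nothing new is needed there. All the content lies in the lower bound
\[
\dgw{\infty}\left(\left(\overline{D_X}, ||\cdot||_\infty, \mu_{\overline{D_X}}\right), \left(\overline{D_Y}, ||\cdot||_\infty, \mu_{\overline{D_Y}}\right)\right) \leq d_{\wDgm}^{E,\infty}\left(w\textnormal{-}\PD_d^{\VR(X)}, w\textnormal{-}\PD_d^{\VR(Y)}\right),
\]
which I would establish through the Optimal-Transport reformulation of the $\infty$-edit distance provided by \cref{prop: OT formulation of dEp}.

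By \cref{prop: OT formulation of dEp}, the right-hand side equals $d_{\defo,\infty} = \inf_{(\gamma,\eta)}\defcost_\infty(\gamma,\eta)$, the infimum of $\infty$-displacement costs over all weighted matchings $(\gamma,\eta)$ between the two weighted persistence diagrams. The key observation is that the underlying measures of the weighted Vietoris--Rips persistence diagrams are precisely the flipped global distributions of distances: the weight on $w\textnormal{-}\PD_d^{\VR(X)}$ is $\mu_{\bar Q,\flip} = \flip_\#(\mu_{GDD(X)}\otimes \mu_{GDD(X)}) = \mu_{\overline{D_X}}$, and similarly for $Y$. Hence, for any weighted matching $(\gamma,\eta)$, its second component $\eta$ is automatically a coupling between $\mu_{\overline{D_X}}$ and $\mu_{\overline{D_Y}}$, so it is an admissible competitor in the Gromov--Wasserstein problem. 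Using $\eta$ as such a coupling gives $\dgw{\infty}(\ldots) \leq \tfrac{1}{2}\dis_\infty(\eta)$.

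It then remains to compare $\dis_\infty(\eta)$ with $\defcost_\infty(\gamma,\eta)$; since both maxima are taken over the same index set $\{(I_1,J_1),(I_2,J_2) : \eta(I_i,J_i)>0\}$, it suffices to prove the pointwise estimate
\[
\big|\,||I_1-I_2||_\infty - ||J_1-J_2||_\infty\,\big| \leq \defo(I_1,I_2,J_1,J_2)
\]
for all such pairs. Writing $I_i=[a_i,b_i]$ and $J_i=[c_i,d_i]$, the two summands $|(a_1-a_2)-(c_1-c_2)|$ and $|(b_1-b_2)-(d_1-d_2)|$ appearing in $\defo$ (see \cref{defn: deformation}) bound $\big||a_1-a_2|-|c_1-c_2|\big|$ and $\big||b_1-b_2|-|d_1-d_2|\big|$ respectively, and the elementary inequality $|\max(x_1,x_2)-\max(y_1,y_2)| \leq \max(|x_1-y_1|,|x_2-y_2|)$ then yields the claim (using only two of the four terms defining $\defo$). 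Thus $\dis_\infty(\eta) \leq \defcost_\infty(\gamma,\eta)$, whence $\dgw{\infty}(\ldots)\leq \tfrac12\defcost_\infty(\gamma,\eta)\leq \defcost_\infty(\gamma,\eta)$; taking the infimum over all weighted matchings $(\gamma,\eta)$ completes the lower bound. The only genuinely delicate point is the bookkeeping that identifies $\eta$ as a coupling of the flipped GDDs and verifies that the two maxima run over a common index set; the pointwise inequality itself is routine once the correct two summands of $\defo$ are isolated.
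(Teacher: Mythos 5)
Your proposal is correct and follows essentially the same route as the paper: the upper bound is quoted from \cref{thm: p stability of weighted persistence diagrams main thm}, and the lower bound passes through \cref{prop: OT formulation of dEp}, identifies the second component $\eta$ of a weighted matching as a coupling of the flipped global distributions of distances, and establishes the pointwise bound $\bigl|\,\|I_1-I_2\|_\infty - \|J_1-J_2\|_\infty\,\bigr| \leq \defo(I_1,I_2,J_1,J_2)$ via the same two summands of $\defo$ that the paper uses. The only cosmetic difference is that you make the factor $\tfrac{1}{2}$ from the definition of $\dgw{\infty}$ explicit before discarding it, which the paper leaves implicit.
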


\begin{proof}
    By~\cref{prop: OT formulation of dEp}, we have that 
    \[
    d_{\wDgm}^{E,\infty} \left(w\textnormal{-}\PD_d^{\VR(X)}, w\textnormal{-}\PD_d^{\VR(Y)}\right) = d_{\defo,\infty} \left(w\textnormal{-}\PD_d^{\VR(X)}, w\textnormal{-}\PD_d^{\VR(Y)}\right)
    \]
    Let $(\gamma, \eta)$ be a weighted matching between the weighted persistence diagrams $w\textnormal{-}\PD_d^{\VR(X)}$ and $w\textnormal{-}\PD_d^{\VR(Y)}$. Then, $\eta$ is a coupling between $\mu_{\overline{D_X}}$ and $\mu_{\overline{D_Y}}$ since the weights on the weighted persistence diagrams $w\textnormal{-}\PD_d^{\VR(X)}$ and $w\textnormal{-}\PD_d^{\VR(Y)}$ are given by $\mu_{\overline{D_X}}$ and $\mu_{\overline{D_Y}}$. The following computation shows that $\dis_{\infty} (\eta) \leq \defcost_{\infty} (\gamma, \eta)$.
    \begin{align*}
        \dis_{\infty} (\eta) &= \max_{\substack{\eta(I_1, J_1)>0 \\ \eta(I_2, J_2)>0}} \big | ||I_1-I_2||_\infty - ||J_1 - J_2||_\infty \big | \\
        &= \max_{\substack{\eta([a_1, b_1], [c_1, d_1])>0 \\ \eta([a_2, b_2], [c_2,d_2])>0}} \big | \max\left(|a_1-a_2|, |b_1-b_2|\right) - \max(|c_1-c_2|, |d_1-d_2|) \big | \\
        &\leq \max_{\substack{\eta([a_1, b_1], [c_1, d_1])>0 \\ \eta([a_2, b_2], [c_2,d_2])>0}} \max\left(\big ||a_1-a_2|-|c_1-c_2|\big |, \big| |b_1-b_2|-|d_1-d_2| \big |\right) \\
        &\leq \max_{\substack{\eta([a_1, b_1], [c_1, d_1])>0 \\ \eta([a_2, b_2], [c_2,d_2])>0}} \defo ([a_1,b_1], [a_2, b_2], [c_1, d_1], [c_2, d_2]) \\
        &= \defcost_{\infty} (\gamma, \eta).
    \end{align*}
    Hence, we conclude that 
    \begin{align*}
         \dgw{\infty} \left ( \left(\overline{D_X}, ||\cdot||_\infty, \mu_{\overline{D_X}}\right), \left( \overline{D_Y}, ||\cdot||_\infty, \mu_{\overline{D_Y}} \right)\right) &\leq d_{\defo,\infty} \left(w\textnormal{-}\PD_d^{\VR(X)}, w\textnormal{-}\PD_d^{\VR(Y)}\right) \\
         &= d_{\wDgm}^{E,\infty} \left(w\textnormal{-}\PD_d^{\VR(X)}, w\textnormal{-}\PD_d^{\VR(Y)}\right) \\
         &\leq 4 \cdot \dgw{\infty} ((X,d_X,\mu_X)), (Y,d_Y,\mu_Y)).
    \end{align*}
\end{proof}

We now present an example where two mm-spaces have the same (flipped) global distributions of distances, thus global distributions fail to discriminate them. On the other hand, $d_{\wDgm}^{E,\infty}$ captures the difference between the weighted persistence diagrams, hence discriminating the mm-spaces.

\begin{ex}\label{ex: same gdd but different wpd}
    In~\cref{fig: same gdd different wpd}, we illustrate two mm-spaces $(X,d_X,\mu_X)$ and $(Y,d_Y,\mu_Y)$. The set $X = \{ (0,0), (1,1), (3,1), (4,0) \} \subseteq \RR^2$ is endowed with the Euclidean distance and $\mu_X$ is the uniform measure. Similarly, the set $Y = \{ (0,0), (3,1), (3,-1), (4,0) \} \subseteq \RR^2$ is endowed with the Euclidean distance and $\mu_Y$ is the uniform measure. These two mm-spaces $(X,d_X,\mu_X)$ and $(Y,d_Y, \mu_Y)$ have the same (flipped) global distributions of distances, as stated in~\cite[Section 2.1 and Figure 4]{Boutin2004}. Thus,
    \[
    \dgw{\infty} \left ( \left(\overline{D_X}, ||\cdot||_\infty, \mu_{\overline{D_X}}\right), \left( \overline{D_Y}, ||\cdot||_\infty, \mu_{\overline{D_Y}} \right)\right) = 0.
    \]
    On the other hand, we have
    \[
    d_{\wDgm}^{E,\infty} \left(w\textnormal{-}\PD_0^{\VR(X)}, w\textnormal{-}\PD_0^{\VR(Y)}\right) >0
    \]
    since the unweighted persistence diagrams $\PD_0^{\VR(X)}$ and $\PD_0^{\VR(Y)}$ have different supports. 
\end{ex}

\begin{figure}
    \centering
    \subfloat[\centering $(X,d_X,\mu_X)$]{{\includegraphics[width=7cm]{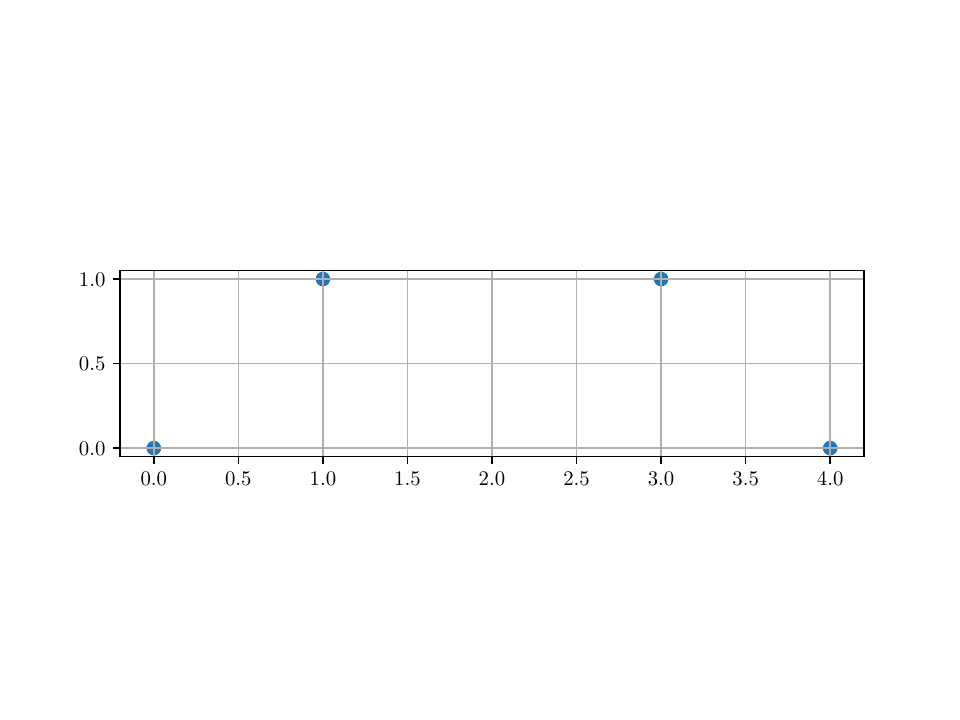} }}%
    \qquad
    \subfloat[\centering $(Y,d_Y,\mu_Y)$]{{\includegraphics[width=7cm]{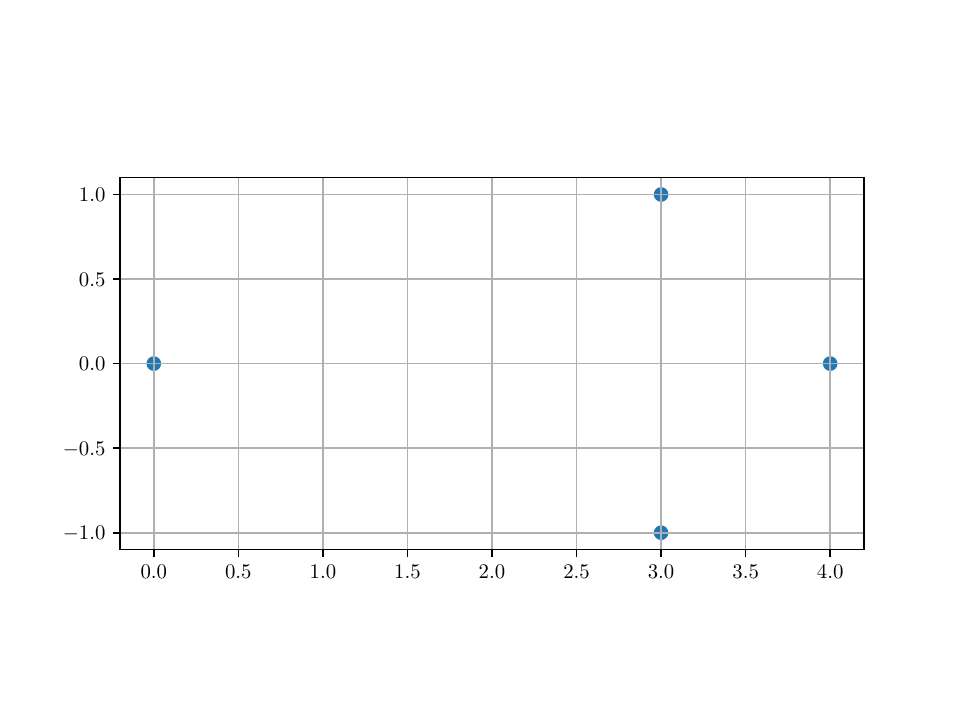} }}%
    \caption{Two mm-spaces with the same (flipped) global distributions of distances but with different weighted persistence diagrams.}%
    \label{fig: same gdd different wpd}%
\end{figure}

\begin{figure}
    \centering
    \subfloat[\centering $\PD_0^{\VR(X)}$]{{\includegraphics[width=7cm]{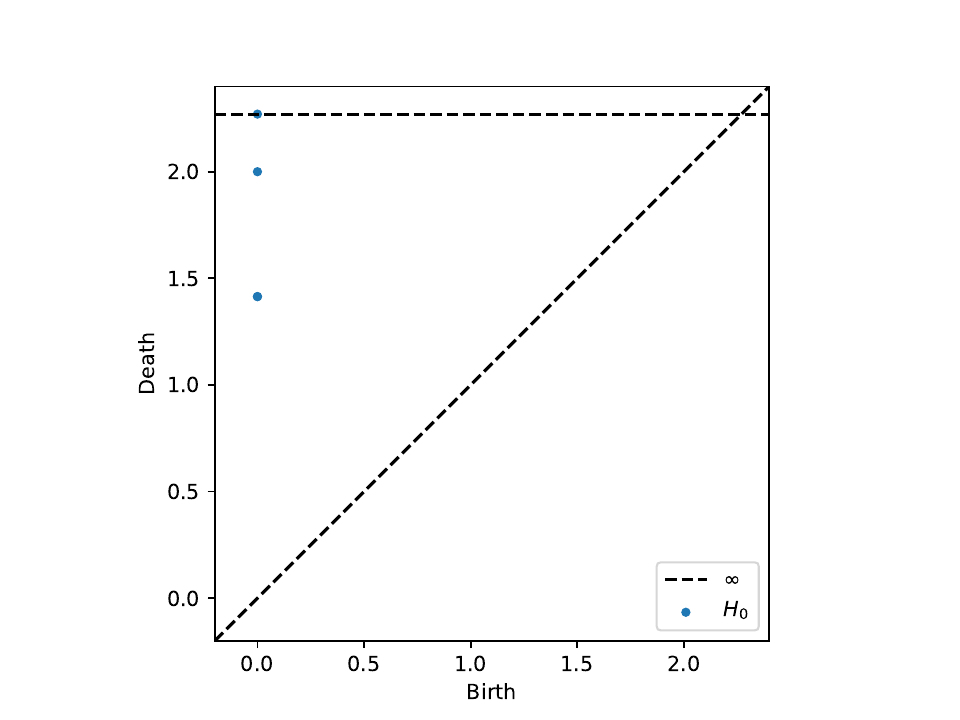} }}%
    \qquad
    \subfloat[\centering $\PD_0^{\VR(Y)}$]{{\includegraphics[width=7cm]{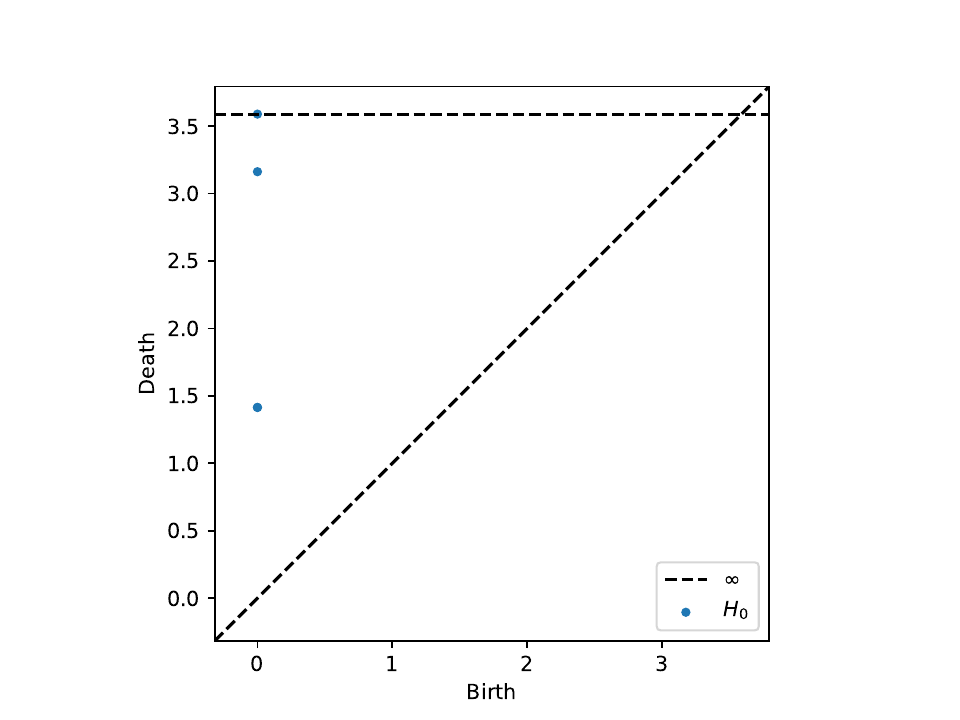} }}%
    \caption{Degree-0 persistence diagrams of the Vietoris-Rips filtrations of $(X,d_X)$ and $(Y,d_Y)$ described in~\cref{ex: same gdd but different wpd} and depicted in~\cref{fig: same gdd different wpd}.}%
    \label{fig: different pds h0}%
\end{figure}

\subsection{Case \texorpdfstring{$p \in [1,\infty)$}{}}

For finite values of $p$, the $p$-edit distance between weighted persistence diagrams becomes solely dependent on the weights, disregarding the topological information captured by the diagrams themselves. We make this precise in the following.

\begin{prop}\label{prop: independence from diagram finite p}
    $w\textnormal{-}\sigma_0 : (\bar Q, \mu_0) \to \NN$ and $w\textnormal{-}\sigma_1 : (\bar Q, \mu_1) \to \NN$ be two weighted persistence diagrams with $\mu_0 = \mu_1 (=: \mu)$. Then, $d_{\wDgm}^{E,p} \left (w\textnormal{-}\sigma_0, w\textnormal{-}\sigma_1  \right) = 0$ for $p\in [1,\infty)$.
\end{prop}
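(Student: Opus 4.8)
The plan is to route everything through the Optimal-Transport reformulation of the $p$-edit distance established in \cref{prop: OT formulation of dEp}, which gives $d_{\wDgm}^{E,p}(w\textnormal{-}\sigma_0, w\textnormal{-}\sigma_1) = d_{\defo,p}(w\textnormal{-}\sigma_0, w\textnormal{-}\sigma_1)$; it therefore suffices to produce weighted matchings of arbitrarily small $p$-displacement cost. The decisive observation is that, for finite $p$, the quantity $\defcost_p(\gamma,\eta)$ depends only on the measure coupling $\eta$ and not on the diagram matching $\gamma$ — the latter enters solely through the compatibility constraint $\gamma(I,J)>0 \Rightarrow \eta(I,J)>0$. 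This decouples the two pieces of data: I may fix any legal matching $\gamma$ between $\sigma_0$ and $\sigma_1$ and then separately search for a cheap compatible coupling $\eta$.

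First I would fix a matching by routing all mass through $[0,0]$: set $\gamma(I,[0,0]):=\sigma_0(I)$ and $\gamma([0,0],I):=\sigma_1(I)$ for every $I\in\bar Q\setminus\diag(\bar Q)$, set $\gamma([0,0],[0,0]):=1$, and $\gamma:=0$ otherwise. Checking the marginal conditions shows this is a valid matching, and crucially its support lies inside $\supp(\mu)\times\supp(\mu)$: the axioms of a weighted persistence diagram force $\sigma_i(I)>0\Rightarrow\mu(I)>0$, while $[0,0]\in\supp(\mu)$ (which holds, e.g., for the full-support weights of the diagrams produced by $\WVR$).

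Next, for each $\epsilon\in(0,1]$ I would take the coupling
\[
\eta_\epsilon := (1-\epsilon)\,\Delta + \epsilon\,(\mu\otimes\mu), \qquad \Delta(I,J):=\mu(I)\cdot\mathbf{1}[I=J],
\]
the convex combination of the diagonal coupling $\Delta$ and the product coupling $\mu\otimes\mu$. Both summands have marginals $\mu$, so $\eta_\epsilon\in\cpl(\mu,\mu)$, and $\eta_\epsilon(I,J)\ge\epsilon\,\mu(I)\mu(J)>0$ on $\supp(\mu)\times\supp(\mu)\supseteq\supp(\gamma)$, making $(\gamma,\eta_\epsilon)$ a genuine weighted matching. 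The estimate is then routine: writing $C:=\max_{I_1,I_2,J_1,J_2\in\bar Q}\defo(I_1,I_2,J_1,J_2)<\infty$ and expanding $\eta_\epsilon\otimes\eta_\epsilon$, the pure-diagonal term vanishes since $\defo(I_1,I_2,I_1,I_2)=0$, while each remaining term carries a factor of $\epsilon$, giving
\[
\defcost_p(\gamma,\eta_\epsilon)^p \le C^p\,(2\epsilon+\epsilon^2) \le 3\,C^p\,\epsilon,
\]
so that $\defcost_p(\gamma,\eta_\epsilon)\le 3^{1/p}\,C\,\epsilon^{1/p}\to 0$. Hence $d_{\defo,p}=0$ and the proposition follows.

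The one point that needs care — and the only genuine obstacle — is the compatibility bookkeeping: an off-diagonal coupling can be supported only where both marginals charge mass, so the matching must be arranged to live inside $\supp(\mu)\times\supp(\mu)$. Routing all excess through $[0,0]$ accomplishes exactly this, using the weighted-diagram axiom for the nondiagonal atoms and the positivity of the weight at $[0,0]$. The vanishing of the pure-diagonal contribution, combined with the small factor $\epsilon$ multiplying every mixed term, is precisely what drives the finite-$p$ cost to zero: it is insensitive to \emph{how much} multiplicity is transported and sees only \emph{how far} measure is moved.
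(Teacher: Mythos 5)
Your proof is correct, but it takes a genuinely different route from the paper's. The paper never invokes the OT reformulation here: it reduces (implicitly via the triangle inequality) to the case where $\sigma_0$ and $\sigma_1$ differ by transporting a single unit of multiplicity from an interval $I_0$ to an interval $I_1$, and then builds an explicit zig-zag path $w\textnormal{-}\sigma_0 \leftarrow w\textnormal{-}\sigma_{\frac14} \to w\textnormal{-}\sigma_{\frac24} \leftarrow w\textnormal{-}\sigma_{\frac34} \to w\textnormal{-}\sigma_1$ in $\wDgm$, in which an atom of weight $\eps$ is split off from $\mu(I_0)$ and $\mu(I_1)$ and carried along the segment between $I_0$ and $I_1$ together with the unit of multiplicity; each morphism has $p$-cost $O(\eps^{1/p})$ because the only intervals that move carry weight at most $\eps$. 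You instead route everything through \cref{prop: OT formulation of dEp} and exploit the sharper structural observation that for finite $p$ the cost $\defcost_p(\gamma,\eta)$ is a functional of $\eta$ alone, so one may fix a single admissible $\gamma$ (funneling all multiplicity through $[0,0]$) and let the off-diagonal mass of $\eta_\eps=(1-\eps)\Delta+\eps(\mu\otimes\mu)$ tend to zero; your marginal checks and the bound $\defcost_p(\gamma,\eta_\eps)^p\le 2\eps\,C^p$ (the pure-diagonal term vanishing because $\defo(I_1,I_2,I_1,I_2)=0$) are all correct. What your approach buys is transparency --- it isolates exactly why finite $p$ ``sees only the weights'' --- and it avoids the paper's somewhat informal, figure-based path construction. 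What it costs is the extra hypothesis you rightly flag: your matching forces $\eta([0,0],[0,0])>0$ and hence $\mu([0,0])>0$ (indeed, if $\mu([0,0])=0$ no weighted matching exists at all, since any matching has $\gamma([0,0],[0,0])=1$). The paper's direct construction does not need this, but the hypothesis is already implicit in making \cref{prop: OT formulation of dEp} non-vacuous and is automatic for the weighted Vietoris--Rips diagrams the proposition targets, so this is a caveat inherited from the framework rather than a gap in your argument.
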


\begin{proof}
    Let $\eps>0$. We will construct a path between $w\textnormal{-}\sigma_0$ and $w\textnormal{-}\sigma_1$ whose cost is controlled by $\eps$. Without loss of generality, we can assume that there exists $I_0 \neq I_1 \in \bar Q$ such that $\sigma_0(I_0) = \sigma_1 (I_1) = 1$, $\sigma_0(I_1) = \sigma_1(I_0) = 0$, and for all $I \in \bar Q \setminus \{ I_0, I_1 \}$ $\sigma_0(I) = \sigma_1 (I)$. For $t\in [0,1]$ let $I_t = (1-t)I_0 + t I_1$. Consider the following weighted persistence diagrams.
    \begin{enumerate}
        \item $w\textnormal{-}\sigma_{\frac{1}{4}}$ whose weights $\mu_{\frac{1}{4}}$ are given by $\mu_{\frac{1}{4}}\left(I_{\frac{1}{4}}\right) = \mu_{\frac{1}{4}} \left(I_{\frac{3}{4}}\right) = \eps $, $\mu_{\frac{1}{4}}(I_0) =\mu(I_0) - \eps$, $\mu_{\frac{1}{4}}(I_1) = \mu (I_1) - \eps$, $\mu_{\frac{1}{4}} (I) = \mu(I)$ for every other interval $I$ and whose diagram is given by $\sigma_{\frac{1}{4}}\left(I_{\frac{1}{4}}\right) =1$, $\sigma_{\frac{1}{4}}(I_0) = \sigma_{\frac{1}{4}} \left(I_{\frac{3}{4}}\right) = \sigma_{\frac{1}{4}} (I_1) = 0$, $\sigma_{\frac{1}{4}} (I) = \mu(I)$ for every other interval $I$. See the figure below for a visual illustration.
        \begin{center}
            \includegraphics[scale=14]{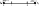}
        \end{center}
        \item $w\textnormal{-}\sigma_{\frac{2}{4}}$ whose weights and diagram is illustrated below.
        \begin{center}
            \includegraphics[scale=14]{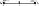}
        \end{center}
        \item $w\textnormal{-}\sigma_{\frac{3}{4}}$ whose weights and diagram is illustrated below.
        \begin{center}
            \includegraphics[scale=14]{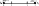}
        \end{center}        
        Then, we obtained a path $$\mathcal{P} : w\textnormal{-}\sigma_0 \leftarrow w\textnormal{-}\sigma_{\frac{1}{4}} \to w\textnormal{-}\sigma_{\frac{2}{4}} \leftarrow w\textnormal{-}\sigma_{\frac{3}{4}} \to w\textnormal{-}\sigma_1 $$ with $\cost_{\wDgm}^p (\mathcal{P}) \leq \eps^{\frac{1}{p}} \cdot M$ for some $M>0$ that only depends on $p$ and $Q$. Hence, by letting $\eps$ go to zero, we obtained that  $d_{\wDgm}^{E,p} \left (w\textnormal{-}\sigma_0, w\textnormal{-}\sigma_1  \right) = 0$.
    \end{enumerate}
\end{proof}

We now present our result which is analogous to~\cref{prop: p infty gdd improved stability}. 

\begin{prop}\label{prop: finite p gdd vs wpd comparison}
    Let $(X,d_X,\mu_X)$ and $(Y,d_Y,\mu_Y)$ be two finite mm-spaces and let $p\in [1,\infty)$. Then,
    \begin{align*}
        \dgw{p} \left ( \left(\overline{D_X}, ||\cdot||_\infty, \mu_{\overline{D_X}}\right), \left( \overline{D_Y}, ||\cdot||_\infty, \mu_{\overline{D_Y}} \right)\right) &\leq 
        d_{\wDgm}^{E,p} \left(w\textnormal{-}\PD_d^{\VR(X)}, w\textnormal{-}\PD_d^{\VR(Y)}\right) \\ &\leq 4^{\frac{p+1}{p}} \cdot \dgw{p} ((X,d_X,\mu_X)), (Y,d_Y,\mu_Y)).
    \end{align*}    
\end{prop}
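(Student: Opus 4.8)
The plan is to prove the two inequalities separately, the right-hand one being immediate and the left-hand one mirroring the $p=\infty$ argument of \cref{prop: p infty gdd improved stability}. For the upper bound there is nothing to do beyond invoking our main stability result: \cref{thm: p stability of weighted persistence diagrams main thm}, applied to the weighted Vietoris--Rips persistence diagrams of $X$ and $Y$, states precisely that
\[
d_{\wDgm}^{E,p}\left(w\textnormal{-}\PD_d^{\VR(X)}, w\textnormal{-}\PD_d^{\VR(Y)}\right) \leq 4^{\frac{p+1}{p}} \cdot \dgw{p}\left((X,d_X,\mu_X),(Y,d_Y,\mu_Y)\right).
\]

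For the lower bound I would first use \cref{prop: OT formulation of dEp} to replace $d_{\wDgm}^{E,p}$ by the $p$-displacement distance $d_{\defo,p}$, so that it suffices to compare $\dgw{p}$ of the flipped global distributions of distances with the infimum of $\defcost_p$ over weighted matchings. Fix any weighted matching $(\gamma,\eta)$ between $w\textnormal{-}\PD_d^{\VR(X)}$ and $w\textnormal{-}\PD_d^{\VR(Y)}$. Since the weights these two diagrams carry are exactly $\mu_{\overline{D_X}}$ and $\mu_{\overline{D_Y}}$, the component $\eta$ is by definition a coupling of $\mu_{\overline{D_X}}$ and $\mu_{\overline{D_Y}}$, hence an admissible competitor in $\dgw{p}\bigl(\overline{D_X},\overline{D_Y}\bigr)$, where $\overline{D_X}$ and $\overline{D_Y}$ carry the $\ell^\infty$ metric and the flipped global distributions of distances.

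The heart of the argument is the same pointwise estimate already isolated in the proof of \cref{prop: p infty gdd improved stability}: for $I_1=[a_1,b_1], I_2=[a_2,b_2]\in\overline{D_X}$ and $J_1=[c_1,d_1], J_2=[c_2,d_2]\in\overline{D_Y}$,
\[
\big|\, \|I_1-I_2\|_\infty - \|J_1-J_2\|_\infty \,\big| \leq \defo(I_1,I_2,J_1,J_2),
\]
which follows from $|\max(u_1,u_2)-\max(v_1,v_2)|\leq \max(|u_1-v_1|,|u_2-v_2|)$ together with the definition of $\defo$. Since $\dis_p(\eta)$ and $\defcost_p(\gamma,\eta)$ are sums of exactly the same combinatorial shape---over quadruples $(I_1,I_2,J_1,J_2)$ weighted by $\eta(I_1,J_1)\,\eta(I_2,J_2)$---raising the pointwise bound to the $p$-th power and summing gives $\dis_p(\eta)^p\leq \defcost_p(\gamma,\eta)^p$, hence $\dis_p(\eta)\leq \defcost_p(\gamma,\eta)$. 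Using $\dgw{p}\bigl(\overline{D_X},\overline{D_Y}\bigr)=\tfrac12\inf_{\eta}\dis_p(\eta)$ over all couplings $\eta$ and taking the infimum over weighted matchings then yields
\[
\dgw{p}\bigl(\overline{D_X},\overline{D_Y}\bigr) \leq \tfrac12\, d_{\defo,p}\left(w\textnormal{-}\PD_d^{\VR(X)}, w\textnormal{-}\PD_d^{\VR(Y)}\right) \leq d_{\wDgm}^{E,p}\left(w\textnormal{-}\PD_d^{\VR(X)}, w\textnormal{-}\PD_d^{\VR(Y)}\right),
\]
which is the desired lower bound.

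I do not expect a genuine obstacle here: the argument is a near-verbatim transcription of the $p=\infty$ case, with the maximum replaced by a $p$-th power sum. The only point requiring attention is bookkeeping---verifying that the pointwise displacement inequality survives the passage to the weighted $p$-sum, which it does because $\dis_p(\eta)$ and $\defcost_p(\gamma,\eta)$ range over identical index sets with identical $\eta\otimes\eta$ weights---and that each $\eta$ coming from a weighted matching is a genuine coupling, so that $2\dgw{p}=\inf_{\eta}\dis_p(\eta)\leq \dis_p(\eta)$ applies termwise before taking the infimum over matchings.
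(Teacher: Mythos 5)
Your proposal is correct and follows essentially the same route as the paper's proof: the upper bound is \cref{thm: p stability of weighted persistence diagrams main thm}, and the lower bound passes through \cref{prop: OT formulation of dEp}, observes that the $\eta$-component of any weighted matching is a coupling of $\mu_{\overline{D_X}}$ and $\mu_{\overline{D_Y}}$, and applies the same pointwise bound $\bigl|\,\|I_1-I_2\|_\infty-\|J_1-J_2\|_\infty\bigr|\leq \defo(I_1,I_2,J_1,J_2)$ inside the $p$-th power sum. The only (harmless) difference is that you retain the factor $\tfrac12$ from the definition of $\dgw{p}$, giving a marginally sharper intermediate inequality than the one the paper records.
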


\begin{proof}
        By~\cref{prop: OT formulation of dEp}, we have that 
    \[
    d_{\wDgm}^{E,p} \left(w\textnormal{-}\PD_d^{\VR(X)}, w\textnormal{-}\PD_d^{\VR(Y)}\right) = d_{\defo,p} \left(w\textnormal{-}\PD_d^{\VR(X)}, w\textnormal{-}\PD_d^{\VR(Y)}\right)
    \]
    Let $(\gamma, \eta)$ be a weighted matching between the weighted persistence diagrams $w\textnormal{-}\PD_d^{\VR(X)}$ and $w\textnormal{-}\PD_d^{\VR(Y)}$. Then, $\eta$ is a coupling between $\mu_{\overline{D_X}}$ and $\mu_{\overline{D_Y}}$ since the weights on the weighted persistence diagrams $w\textnormal{-}\PD_d^{\VR(X)}$ and $w\textnormal{-}\PD_d^{\VR(Y)}$ are given by $\mu_{\overline{D_X}}$ and $\mu_{\overline{D_Y}}$. The following computation shows that $\dis_{p} (\eta) \leq \defcost_{p} (\gamma, \eta)$.
    \begin{align*}
        (\dis_{p} (\eta))^p &= \sum_{I_1, I_2, J_1, J_2} \big | ||I_1-I_2||_\infty - ||J_1 - J_2||_\infty \big |^p \; \eta(I_1, J_1) \; \eta(I_2,J_2) \\
        &= \sum_{\substack{[a_1, b_1], [a_2, b_2] \\ [c_1, d_1], [c_2,d_2]}} \big | \max\left(|a_1-a_2|, |b_1-b_2|\right) - \max(|c_1-c_2|, |d_1-d_2|) \big |^p \; \eta(I_1, J_1) \; \eta(I_2,J_2) \\
        &\leq \sum_{\substack{[a_1, b_1], [a_2, b_2] \\ [c_1, d_1], [c_2,d_2]}} \max\left(\big ||a_1-a_2|-|c_1-c_2|\big |, \big| |b_1-b_2|-|d_1-d_2| \big |\right)^p \; \eta(I_1, J_1) \; \eta(I_2,J_2) \\
        &\leq \sum_{\substack{[a_1, b_1], [a_2, b_2] \\ [c_1, d_1], [c_2,d_2]}} \defo ([a_1,b_1], [a_2, b_2], [c_1, d_1], [c_2, d_2])^p \; \eta(I_1, J_1) \; \eta(I_2,J_2) \\
        &= (\defcost_{\infty} (\gamma, \eta))^p.
    \end{align*}
    Hence, we conclude that 
    \begin{align*}
         \dgw{p} \left ( \left(\overline{D_X}, ||\cdot||_\infty, \mu_{\overline{D_X}}\right), \left( \overline{D_Y}, ||\cdot||_\infty, \mu_{\overline{D_Y}} \right)\right) &\leq d_{\defo,p} \left(w\textnormal{-}\PD_d^{\VR(X)}, w\textnormal{-}\PD_d^{\VR(Y)}\right) \\
         &= d_{\wDgm}^{E,p} \left(w\textnormal{-}\PD_d^{\VR(X)}, w\textnormal{-}\PD_d^{\VR(Y)}\right) \\
         &\leq 4 \cdot \dgw{p} ((X,d_X,\mu_X)), (Y,d_Y,\mu_Y)),
    \end{align*}
    where the last inequality follows from~\cref{thm: p stability of weighted persistence diagrams  main thm}.
\end{proof}

As noted earlier in~\cref{prop: independence from diagram finite p}, $d_{\wDgm}^{E,p}$ is insensitive to the diagrams and it only compares the weights. In the case of weighted persistence diagrams of weighted Vietoris-Rips filtrations of finite mm-spaces, these weights are the flipped global distributions. When $(X,d_X,\mu_X)$ and $(Y,d_Y,\mu_Y)$ are finite mm-spaces, both \\ $\dgw{p} \left ( \left(\overline{D_X}, ||\cdot||_\infty, \mu_{\overline{D_X}}\right), \left( \overline{D_Y}, ||\cdot||_\infty, \mu_{\overline{D_Y}} \right)\right)$ and $d_{\wDgm}^{E,p} \left(w\textnormal{-}\PD_d^{\VR(X)}, w\textnormal{-}\PD_d^{\VR(Y)}\right)$ only depends on the flipped global distributions of distances. So, one might suspect that the inequality
\[
\dgw{p} \left ( \left(\overline{D_X}, ||\cdot||_\infty, \mu_{\overline{D_X}}\right), \left( \overline{D_Y}, ||\cdot||_\infty, \mu_{\overline{D_Y}} \right)\right) \leq 
        d_{\wDgm}^{E,p} \left(w\textnormal{-}\PD_d^{\VR(X)}, w\textnormal{-}\PD_d^{\VR(Y)}\right)
\]
that we present in~\cref{prop: finite p gdd vs wpd comparison} could always be an equality. However, this is not the case. While $\dgw{p}$ utilizes distortion to assign a cost to a coupling, $d_{\wDgm}^{E,p}$ utilizes the displacement as we defined in~\cref{defn: deformation}. Unlike distortion, which remains unaffected by the order of intervals, displacement is highly sensitive to the order. We now present an example where $$\dgw{p} \left ( \left(\overline{D_X}, ||\cdot||_\infty, \mu_{\overline{D_X}}\right), \left( \overline{D_Y}, ||\cdot||_\infty, \mu_{\overline{D_Y}} \right)\right) = 0$$ but $$d_{\wDgm}^{E,p} \left(w\textnormal{-}\PD_d^{\VR(X)}, w\textnormal{-}\PD_d^{\VR(Y)}\right) > 0.$$

\begin{ex}\label{ex: same gdd but displacement distinguishes}
    Consider the two mm-spaces $(X,d_X,\mu_X)$ and $(Y,d_Y,\mu_Y)$ depicted in~\cref{fig: triangle and line}. We illustrate their flipped global distributions of distances in~\cref{fig: gdd triangle and line}. For $p\in[1,\infty)$ We have that
    \[
    \dgw{p} \left ( \left(\overline{D_X}, ||\cdot||_\infty, \mu_{\overline{D_X}}\right), \left( \overline{D_Y}, ||\cdot||_\infty, \mu_{\overline{D_Y}} \right)\right) = 0.
    \]
    On the other hand, we have that
    \[
    d_{\wDgm}^{E,p} \left(w\textnormal{-}\PD_d^{\VR(X)}, w\textnormal{-}\PD_d^{\VR(Y)}\right) > 0.
    \]
\end{ex}

\begin{figure}
    \centering
    \subfloat[\centering $(X,d_X,\mu_X)$]{{\includegraphics[width=5cm]{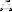} }}%
    \qquad
    \subfloat[\centering $(Y,d_Y,\mu_Y)$]{{\includegraphics[width=5cm]{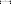} }}%
    \caption{Two mm-spaces whose flipped global distributions of distances cannot be discriminated by $\dgw{p}$ but can be discriminated by $d_{\wDgm}^{E,p}$.}%
    \label{fig: triangle and line}%
\end{figure}

\begin{figure}
    \centering
    \subfloat[\centering $\left( \overline{D_X}, ||\cdot||_\infty, \mu_{\overline{D_X}} \right)$]{{\includegraphics[width=7cm]{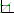} }}%
    \qquad
    \subfloat[\centering $\left( \overline{D_Y}, ||\cdot||_\infty, \mu_{\overline{D_Y}} \right)$]{{\includegraphics[width=7cm]{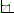} }}%
    \caption{Flipped global distributions of distances of the mm-spaces $(X,d_X,\mu_X)$ and $(Y,d_Y,\mu_Y)$ depicted in~\cref{fig: triangle and line}}%
    \label{fig: gdd triangle and line}%
\end{figure}

\clearpage

\bibliographystyle{alpha}
\bibliography{references}{}

\end{document}